\newtheorem{theorem}{Theorem}
\newtheorem{proposition}[theorem]{Proposition}
\newtheorem{lemma}[theorem]{Lemma}
\newtheorem{corollary}[theorem]{Corollary}
\newtheorem{remark}[theorem]{Remark}
\newtheorem*{remark*}{Remark}
\newtheorem{definition}[theorem]{Definition}
\title{Integral characterization for Poincar\'e half-maps in planar linear systems}
\author{Victoriano Carmona\thanks{
Escuela Polit\'ecnica Superior.  Calle Virgen de \'Africa, 7. 41011 Sevilla. Spain. Email: vcarmona@us.es}
\and  Fernando Fern\'andez-S\'anchez\thanks{
Escuela T\'ecnica Superior de Ingenier\'ia. Camino de los Descubrimientos s/n. 41092 Sevilla. Spain. Email: fefesan@us.es}}
\date{
Dpto. Matem\'atica Aplicada II \& IMUS. \\ Universidad de Sevilla.\\
}                    
\begin{document}

\maketitle

\begin{abstract}
The intrinsic nature of a problem usually suggests a first suitable method to deal with it.
Unfortunately, the apparent ease of application of these initial approaches may make their
possible flaws seem to be inherent to the problem and often no alternative ways to solve it are searched for.
For instance,
since linear systems of differential equations are easy to integrate, Poincar\'e half-maps 
for piecewise linear systems are always studied
by using the direct integration of the system in each zone of linearity. However, this approach is
accompanied by two important defects: due to the different 
spectra
of the involved matrices, many cases and strategies must be considered  and,
since the flight time appears as a new variable,
nonlinear complicated equations arise.  

This manuscript is devoted to present a novel theory to characterize Poincar\'e half-maps in planar linear systems, that avoids the computation of their solutions and the problems it causes. 
This new perspective rests on the use of line integrals 
of a specific conservative vector field which is orthogonal to the flow of the linear system. Besides the obvious mathematical interest, this approach is attractive because it allows to simplify the study of 
piecewise-linear systems and deal with open problems in this field.
\end{abstract}

\textbf{Keywords:} Piecewise planar linear systems, Poincar\'e half-maps, Inverse integrating factors.

\textbf{MSC2010}: 34A26, 34A36, 34C05.

\tableofcontents

\section{Introduction}\label{sec:intro}
Linearization, Lyapunov stability, normal forms, and index theory are some fundamental tools for the analysis of dynamical systems. Another one, the construction of Poincar\'e maps, is specially suitable for the study of existence, uniqueness, and stability of periodic orbits, homoclinic connections, and heteroclinic cycles. 

In the case of piecewise systems, the analysis of orbits that cross the separation manifolds between different regions leads naturally to the use of Poincar\'e maps
that are therefore defined as composition of transition maps, sometimes called the Poincar\'e half-maps, between the separation boundaries. Usually, the explicit calculation of these maps is a difficult task because it depends obviously on the equations of the involved systems. However, in the case of piecewise linear systems, direct integration of the equations may be used in each region to obtain feasible expressions. 

Unfortunately, this advantage of linear systems, that is, the possibility of performing direct integration of the equations, has also two important weaknesses for the construction of Poincar\'e half-maps. The first one is that the computation of the solutions of the linear systems, together with its subsequent study, is strongly conditioned by the spectrum of the matrix of the system and the final expression of Poincar\'e half-maps is written in terms of the eigenvalues.
This fact forces the appearance of many different cases to study. The second weak point is the inevitable (non-linear) dependence of the Poincar\'e half-maps on the flight time, namely, the time spent by the orbit between two consecutive intersection points with the separation manifolds.

These two deficiencies are even further important if we take into account that almost every works about Poincar\'e maps for piecewise linear systems use direct integration of the systems, what is accompanied by large case-by-case studies. The valuable works \cite{AnViKh66,FrPoRoTo98,LlTe14,MeTo15,LlPoVa19}, sorted by year of publication, are a few examples of these case-by-case studies from the early years to nowadays. Moreover, each one of these different cases requires individual techniques. This fact hinders and slows the research on the dynamic behavior of piecewise linear systems.

The main motivation of this work is how to override the flaws in the analysis of the transition maps due to performing the integration of planar linear systems. The obvious procedure is to avoid the computation of these integrals. 
In order to do it, we develop a new technique to characterize Poincar\'e half-maps of linear systems in a common, unique, and more suitable expression, without annoying
exhaustive divisions into cases 
and without the unnecessary dependence on the flight time. This new approach is the main goal of this manuscript.

Beyond the importance on its own of the new characterization of Poincar\'e half-maps, the relevance of this approach is also made evident by the simplification of the study of many important issues related to planar piecewise linear systems; for instance, the analyticity 
of Poincar\'e half-maps at tangency points (that is given in this work) or the open problem about providing optimal upper bounds on the number of limit cycles (see \cite{CaFeNoPR1,CaFeNoPR2}).

Loosely speaking, the characterization of the Poincar\'e half-map
related to the Poincar\'e section $\Sigma\equiv\{x=0\}$
 for a generic planar linear system in Lienard form
$$
\left\{\begin{array}{rcl}
\dot{x}&=&Tx-y,
\\ \dot{y} &=& Dx-a
\end{array}\right.
$$
where $a$, $T$, $D$ are real numbers, says that
the following assertions are equivalent:
\begin{enumerate}
\item[a)] $y_1$ is the image of $y_0$ by means of a Poincar\'e half-map,
\item[b)] $y_0,y_1\in\mathbb{R}$ satisfy $y_0 \, y_1\leqslant 0$ and
$$
  \operatorname{PV}\int_{y_1}^{y_0} \frac{-y}{D y^2-a T y+a^2} dy =c T, 
$$
for three concrete values of constant $c\in\mathbb{R}$. Here, $\operatorname{PV}$ stands for the
Cauchy Principal Value defined at \eqref{eq:PV}.
\end{enumerate}
Notice that the Cauchy Principal Value is only necessary for the case $a=0$, where the integral is improper and divergent due to a singularity at the origin.
In the interest of rigor, conditions for the existence of the Poincar\'e half-map and the integral must be added to the domains of variables $y_0$, $y_1$, and to the values of the parameters. Moreover, the three values of $c$ depend on the parameters and the relative location of the Poincar\'e section and the equilibrium of the system, if it exists.

The formal statement of the main result needs the definition of concepts, development of ideas, 
and establishment of preliminary results that are sequentially presented in this manuscript for the sake of better understanding.
Its proof is a direct consequence of the reciprocal results given in Theorem \ref{th:implicaderecha} and Theorem \ref{th:implicaizquierda}. Furthermore, in this second theorem, an integral expression for the flight time is also given.

The basic idea behind our approach to characterize the Poincar\'e half-maps is very easy and it can be sketched as follows (the details are given in the next sections). Divide the linear vector field that defines the system by a suitable function (namely, an inverse integrating factor) to get an orthogonal conservative vector field. Choose any $y_0, y_1\in\mathbb{R}$ such that  $(0,y_0)$ and $(0,y_1)$ are connected by a piece of an orbit of the system, $y_0 y_1\leqslant0$, and the closed curve formed by this piece of orbit and the segment joining $(0,y_0)$ and $(0,y_1)$ is a Jordan curve. The integral of the orthogonal vector field along this Jordan curve may take just three values that do not depend on $y_0$ and $y_1$. They just depend on the relative position between the Jordan curve and the equilibrium point of the system. Following these simple steps, a nice and manageable integral implicit equation for the Poincar\'e half-map has been obtained.

Naturally, this idea can be also extended to non-linear systems. As we have said in the previous paragraph, the only requirement is the existence of a suitable inverse integrating factor that allows the obtention of a reasonable integral implicit equation.

Therefore, this work is divided into several parts which are devoted to present the new concepts and ideas needed to prove the reciprocal results given in Theorem \ref{th:implicaderecha} (in Section \ref{sec:integral}) and Theorem \ref{th:implicaizquierda} (in Section \ref{sec:recovery}). The first logical step for our analysis is to give  an accurate definition and a detailed description of Poincar\'e half-maps associated to a straight line for planar linear systems. The location of the equilibrium of the linear system, if it exists, relative to the straight line allows to classify all the possible Poincar\'e half-maps into just three scenarios. This is done in Section \ref{sec:poincare}.

Now, the target of Section \ref{sec:integral} is the construction of an alternative way to write the
existing relationship between a point $y_0$ and its image $y_1$ by the
Poincar\'e half-map. More specifically, the beginning of this section is a brief summary of definitions and results about inverse integrating factors, a basic tool to obtain the suitable vector field that is integrated on appropriate closed curves to reach the desired alternative expression. This integral expression brings together all possible geometric configurations into a unique common function, in variables $y_0$ and $y_1$, one of whose level curves is the graph of the Poincar\'e half-map. Moreover, in this section, it is put into evidence a natural relationship between this common function and the index of a closed curve. This suggests for the function the name of the \emph{index-like function}.

Section \ref{sec:index-like} corresponds to the study of the main properties of the index-like function and its set of level curves. The primary results of this section concern, on the one hand, the analyticity and bijectivity of the implicit functions defined by the level curves and, on the other hand, the existence of a third-order differential system whose orbits are the graphs of these implicit functions. All the properties obtained in Section \ref{sec:index-like} are used in Section \ref{sec:recovery} to prove Theorem \ref{th:implicaizquierda} and thus to close the characterization of the Poincar\'e half-maps (and to get the flight time) in terms of the index-like function.
Finally, some conclusions and future works are given in Section \ref{sec:conclusions}.

\section{Poincare half-maps for planar linear systems}\label{sec:poincare}

Let us consider, for $\mathbf{x}=(x_1,x_2)^T$, the autonomous linear system 
\begin{equation}
\label{eq:sislinnohom}
    \dot{\mathbf{x}}=M \, \mathbf{x}+\mathbf{b}
\end{equation}
where $M=(m_{ij})_{i,j=1,2}$ is a real matrix and $\mathbf{b}=(b_1,b_2)^T\in\mathbb{R}^2$. Let us chose the Poincar\'e section $\Sigma\equiv\{x_1=0\}$.

Although the further analysis can be performed directly to system \eqref{eq:sislinnohom}, it is a good idea to reduce previously the number of parameters.
Note that if coefficient $m_{12}$ vanishes, system (\ref{eq:sislinnohom}) is uncoupled in such a way that a Poincar\'e half-map on section $\Sigma$ can not be defined (no return is possible).
Therefore, from now on, let us assume that $m_{12}\ne0$, what is usually called the observability condition \cite{ieee}. Under this assumption, the linear change of variable $x=x_1$, 
$
y=m_{22} x_1-m_{12} x_2-b_1,
$
 allows to write system \eqref{eq:sislinnohom}  into the generalized Lienard form,
 \begin{equation}\label{eq:lienard}
\left(\begin{array}{l} \dot{x}\\ \dot{y} \end{array}\right)=\left(\begin{array}{rr} T & -1\\
D& 0\end{array}\right) \left(\begin{array}{l} x\\ y \end{array}\right)-\left(\begin{array}{l}0\\a\end{array}\right),
\end{equation}
 where $a=m_{12}b_2-m_{22}b_1$ and $T$ and $D$ stand for the trace and the determinant of matrix $M$ respectively. Let us call $A$ the matrix of system \eqref{eq:lienard} and
\begin{equation}
\label{eq:lienard_L}
L(x,y)=(Tx-y,Dx-a)
\end{equation}
the corresponding vector field. In the new coordinates, since $x_1=x$, the Poincar\'e section $\Sigma$ remains the same.

The first equation of system \eqref{eq:lienard} evaluated on section $\Sigma=\{x=0\}$ is reduced to $\dot{x}|_{\Sigma}=-y$. Therefore, the flow of the system
crosses $\Sigma$ from the half-plane $\{x>0\}$ to $\{x<0\}$ when $y>0$, from the half-plane $\{x<0\}$ to $\{x>0\}$ when 
$y<0$ and it is tangent to $\Sigma$ at the origin.

The goal of this section is to define, in the usual way, the Poincar\'e half-maps of system (\ref{eq:sislinnohom}) corresponding to the section $\Sigma$ and to show that, in spite of the distinct cases that may appear in terms of the spectrum of $A$, there are only three different geometric types of Poincar\'e half-maps. 

Without loss of generality, since system \eqref{eq:lienard} is invariant under the change $(x,y,a)\longleftrightarrow(-x,-y,-a)$, it is only necessary to define the left Poincar\'e half-map. That is, let us consider $(0,y_0)\in\Sigma$ with $y_0\geqslant0$ and let be
\begin{equation}
\label{eq:orbitaflujo}
\Psi(t;y_0)=(\Psi_1(t;y_0),\Psi_2(t;y_0))
\end{equation}
the orbit of system \eqref{eq:lienard} that satisfies $\Psi(0;y_0)=(0,y_0)$. If there exists a value $\tau(y_0)>0$ such that $\Psi_1(\tau(y_0);y_0)=0$ and $\Psi_1(t;y_0)<0$ for every $t\in(0,\tau(y_0))$, we say 
that $y_1=\Psi_2(\tau(y_0);y_0)\leqslant0$ is the image of $y_0$ by the left Poincar\'e half-map, denoted by $y_1=P(y_0)$, and the value $\tau(y_0)$ is the corresponding left flight time. See Fig.~\ref{fig:poincare}(a).

\begin{remark}\label{rem:tangenciainvisible}
In the case that $P(0)$ can not be defined in this previous way but for every $\varepsilon>0$ there exist $y_0\in(0,\varepsilon)$
and $y_1\in(-\varepsilon,0)$
such that $P(y_0)=y_1$, the left Poincar\'e half-map can be extended with $P(0)=0$. This case corresponds to an equilibrium at the origin or a scenario known as an invisible tangency \cite{KuRiGr03} for half-plane $\{x<0\}$. See Fig.~\ref{fig:poincare}(b).  

Notice that in this case the left flight time can be also extended to the origin. For an invisible tangency, since $\tau(y_0)$ tends to $0$ as $y_0$ tends to $0$, the left flight time $\tau(0)$ should vanish. However, when the origin is an equilibrium, since the existence of the left Poincar\'e half-map implies that $4D-T^2>0$ (i.e., the equilibrium point is a center or a focus) and it is known that $\tau(y_0)=\frac{2\pi}{\sqrt{4D-T^2}}$ for any $y_0>0$ (see \cite{FrPoRoTo98}) then the natural choice is $\tau(0)=\frac{2\pi}{\sqrt{4D-T^2}}$.
\end{remark}

\begin{figure}[!h]
    \begin{center}
    \begin{tabular}{cc}
     \includegraphics[width=0.4\linewidth]{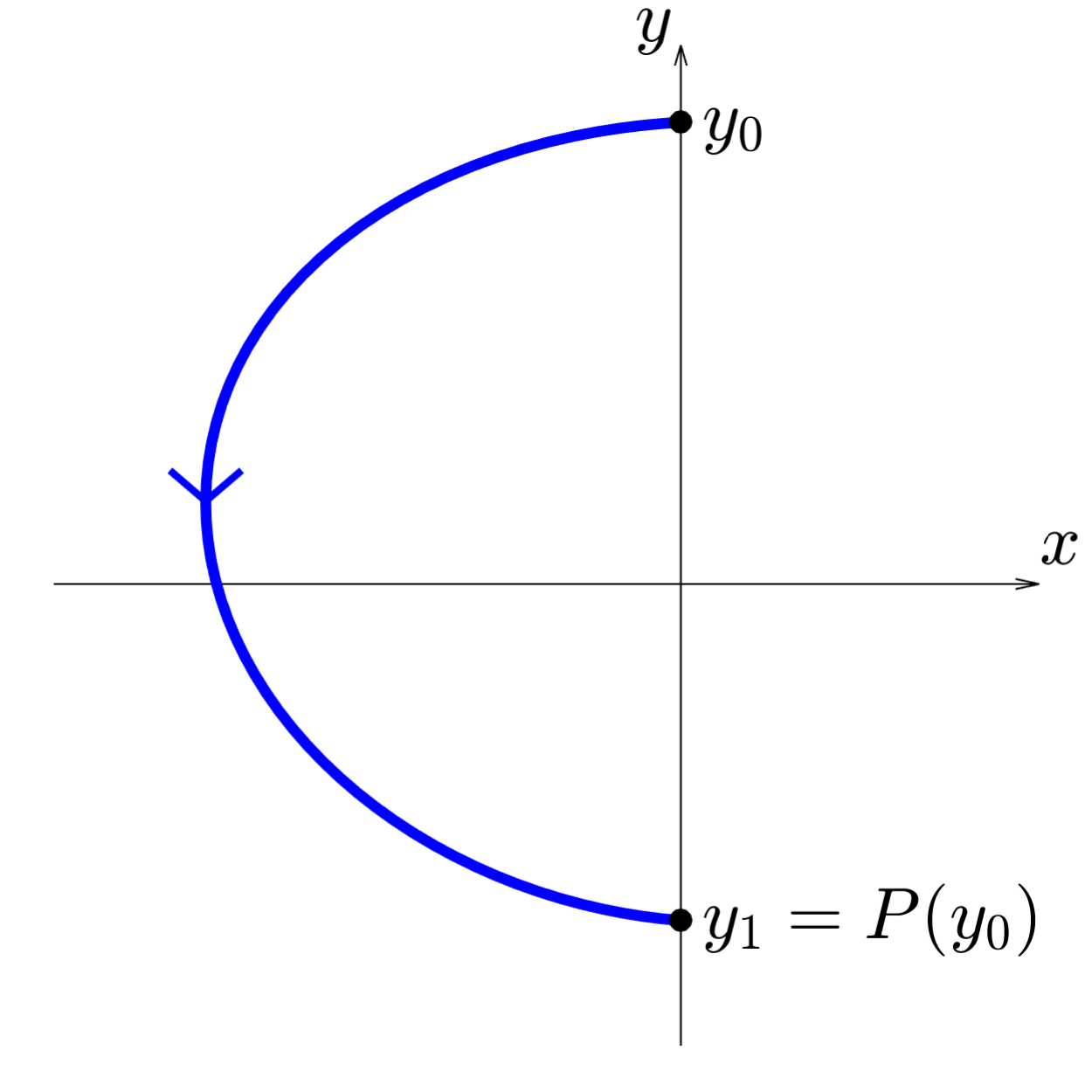}& \hspace{0.5cm}
     \includegraphics[width=0.4\linewidth]{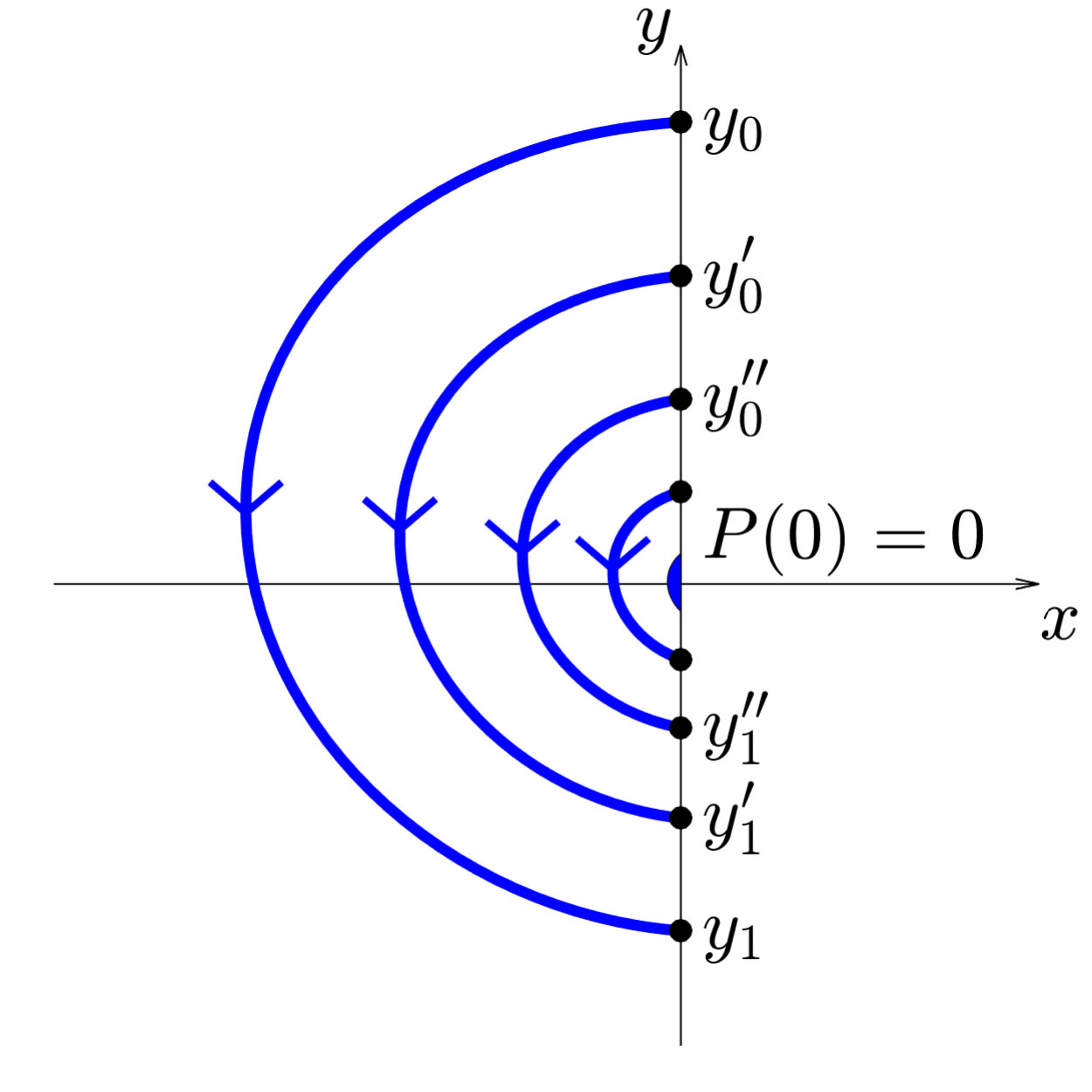}\\ 
     {\footnotesize (a)} & \hspace{0.5cm} {\footnotesize (b)}
     \end{tabular}
    \end{center}
     \caption{(a) Schematic drawing of the construction of the left Poincar\'e half-map. (b) Left Poincar\'e half-map for an invisible tangency or an equilibrium point at the origin.}\label{fig:poincare}
\end{figure}

From now until the end of this section, we assume that the following hypothesis holds: 
\begin{enumerate}
\item[(H)] \emph{There exist $y_0\geqslant0$  and $\tau(y_0)>0$ such that 
$\Psi_1(\tau(y_0);y_0)=0$ and $\Psi_1(t;y_0)<0$ for every $t\in(0,\tau(y_0))$.} 
\end{enumerate}
Let us consider the value $y_1=P(y_0)$, the segment $\Gamma_1=\{(0,y)\in\mathbb{R}^2: y\in(y_1,y_0)\}$, the piece of orbit $\Gamma_2=\{\Psi(t;y_0): t\in[0,\tau(y_0)]\}$ and the Jordan curve 
\begin{equation}
\label{eq:Jordancurve}
\Gamma=\Gamma_1\cup\Gamma_2.
\end{equation}
Note that the set $\Gamma_1$ may be the empty set (when $y_1=y_0=0$).
Under assumption (H), linear system \eqref{eq:lienard} has, at most, one equilibrium point. Note that if the system had infinitely many equilibria (that is, the equality $D^2+a^2=0$ holds) the straight line $y=T x$ would be foliated by these equilibrium points. This fact contradicts the theorem of existence and uniqueness of solutions since the straight line $y=T x$ would intersect the piece of orbit $\Gamma_2$.

Depending on the relative position of $\Gamma$ and the equilibrium point of system \eqref{eq:lienard}, if any, the following three mutually exclusive scenarios appear.
\begin{definition}
\label{def:escenarios}
Let us name the different scenarios as:
\begin{enumerate}
\item[(S$_2$)] The equilibrium point exists and it belongs to the interior of Jordan curve $\Gamma$, $\operatorname{Int}(\Gamma)$.
\item[(S$_1$)] The equilibrium point is the origin or, equivalently, the equilibrium exists and it belongs to segment $\Gamma_1$.
\item[(S$_0$)] All other cases, i.e., the set $\Gamma\cup\operatorname{Int}(\Gamma)$ contains no equilibrium points of system \eqref{eq:lienard}.
\end{enumerate}
\end{definition}
The reason for the numbering of the different scenarios comes from the values $k$ that will be given in Theorem \ref{th:implicaderecha}.

Let us briefly describe 
the three scenarios. In scenario (S$_2$), the equilibrium of system \eqref{eq:lienard} is located at the half-plane $\{x<0\}$ and the only possible configurations for the phase portrait are a center, a stable focus, and an unstable focus (that is, $4D-T^2>0$ and $a<0$). In the center case the domain of definition $\mathcal{D}$ of the left Poincar\'e half-map is the interval $[0,+\infty)$ and its range $\mathcal{R}$ is the interval $(-\infty,0]$. In the stable focus case, there exist a value $\hat{y}_0>0$ such that $P(\hat{y}_0)=0$, the domain is $\mathcal{D}=[\hat{y}_0,+\infty)$ and the range is $\mathcal{R}=(-\infty,0]$. For the unstable focus case, there is a value $\hat{y}_1<0$ such that $P(0)=\hat{y}_1$, the domain is $\mathcal{D}=[0,+\infty)$ and the range is $\mathcal{R}=(-\infty,\hat{y}_1]$. 
See Fig.~\ref{fig:casoS2}.

\begin{figure}[!h]
    \begin{center}
    \begin{tabular}{c@{}c@{}c}
     \includegraphics[width=0.33\linewidth]{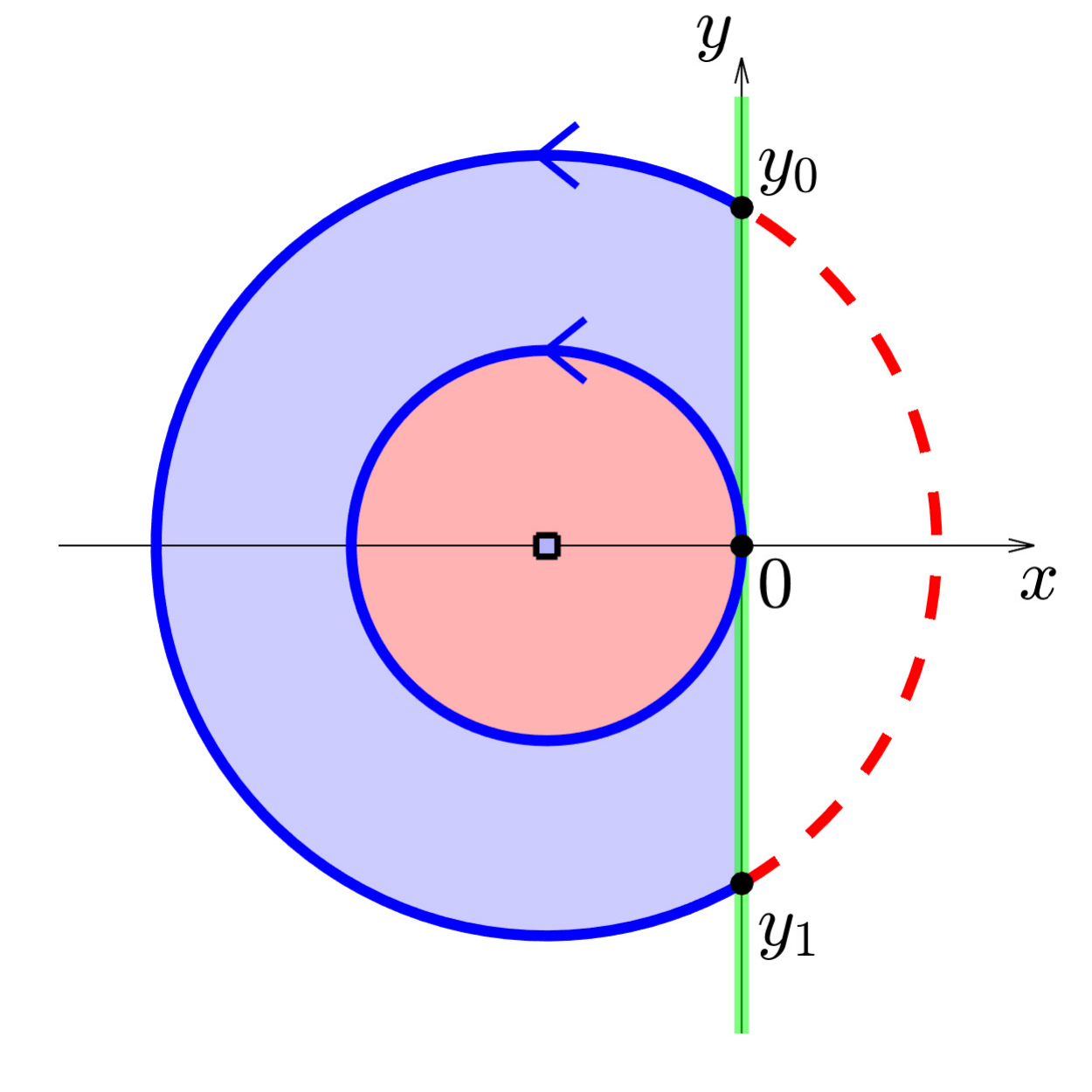}& 
     \includegraphics[width=0.33\linewidth]{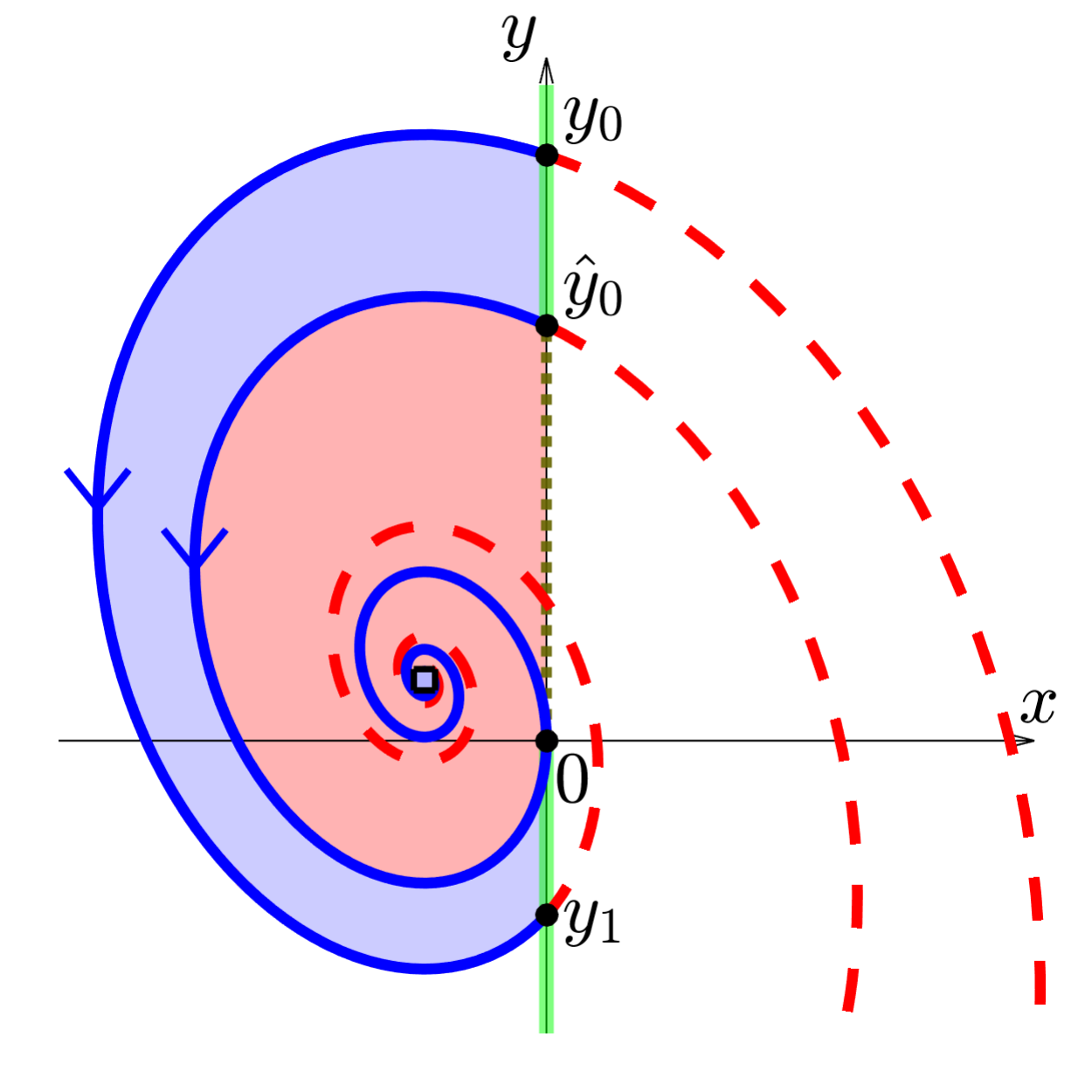}& 
     \includegraphics[width=0.33\linewidth]{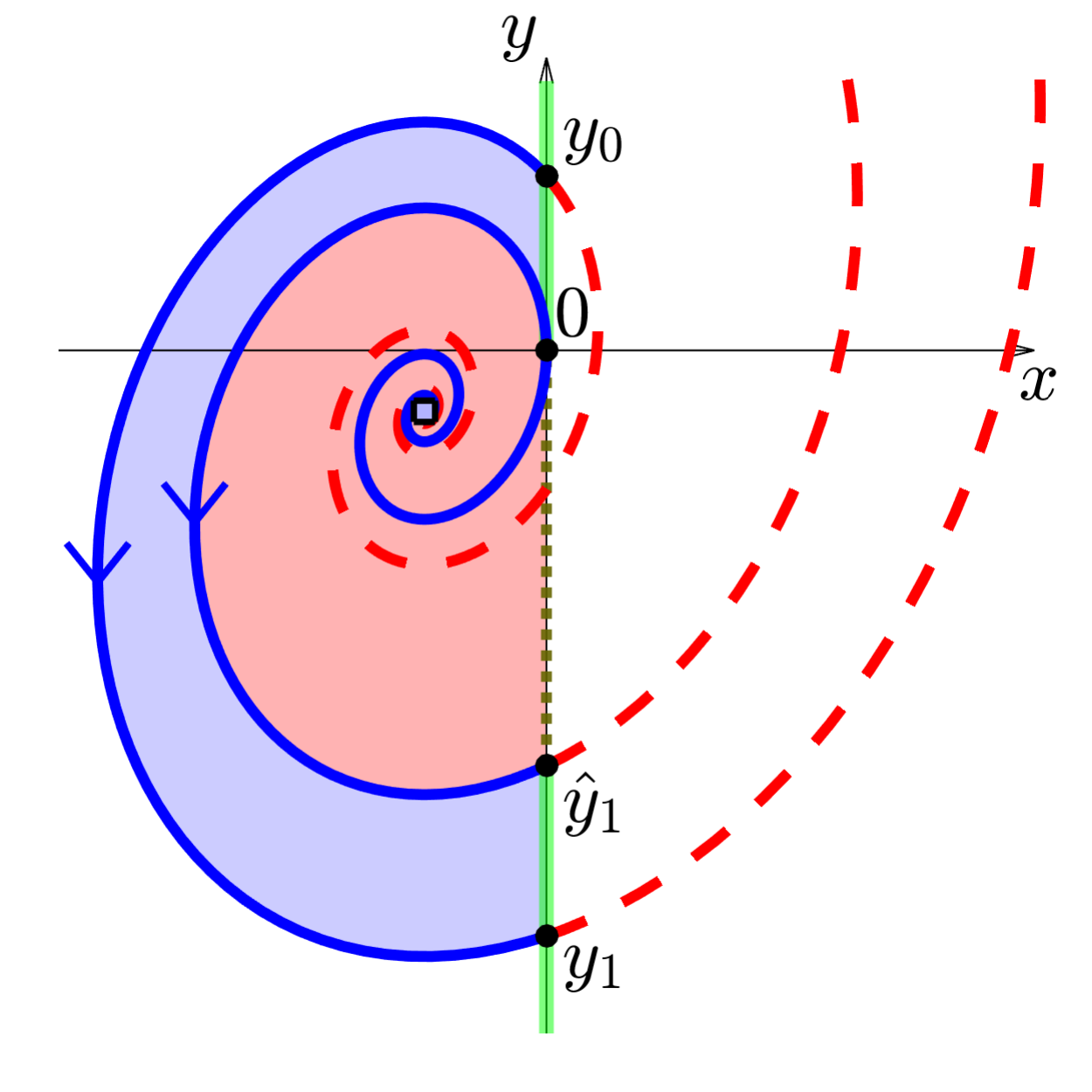}\\ 
     {\footnotesize (a)} & {\footnotesize (b)}& {\footnotesize (c)}
     \end{tabular}
    \end{center}
     \caption{Scenario (S$_2$): (a) center, (b) stable focus, (c) unstable focus.}\label{fig:casoS2}
\end{figure}

For scenario (S$_1$), the equilibrium of system \eqref{eq:lienard} is located at the origin and the only possible configurations for the phase portrait are a center, a stable focus, and an unstable focus (that is, $4D-T^2>0$ and $a=0$). In all these cases, the left Poincar\'e half-map can be extended to the origin by means of the definition $P(0)=0$ (see Remark \ref{rem:tangenciainvisible}). Therefore the domain is $[0,+\infty)$ and the range is $(-\infty,0]$.
See Fig.~\ref{fig:casoS1}.

\begin{figure}[!h]
    \begin{center}
    \begin{tabular}{c@{}c@{}c}
     \includegraphics[width=0.33\linewidth]{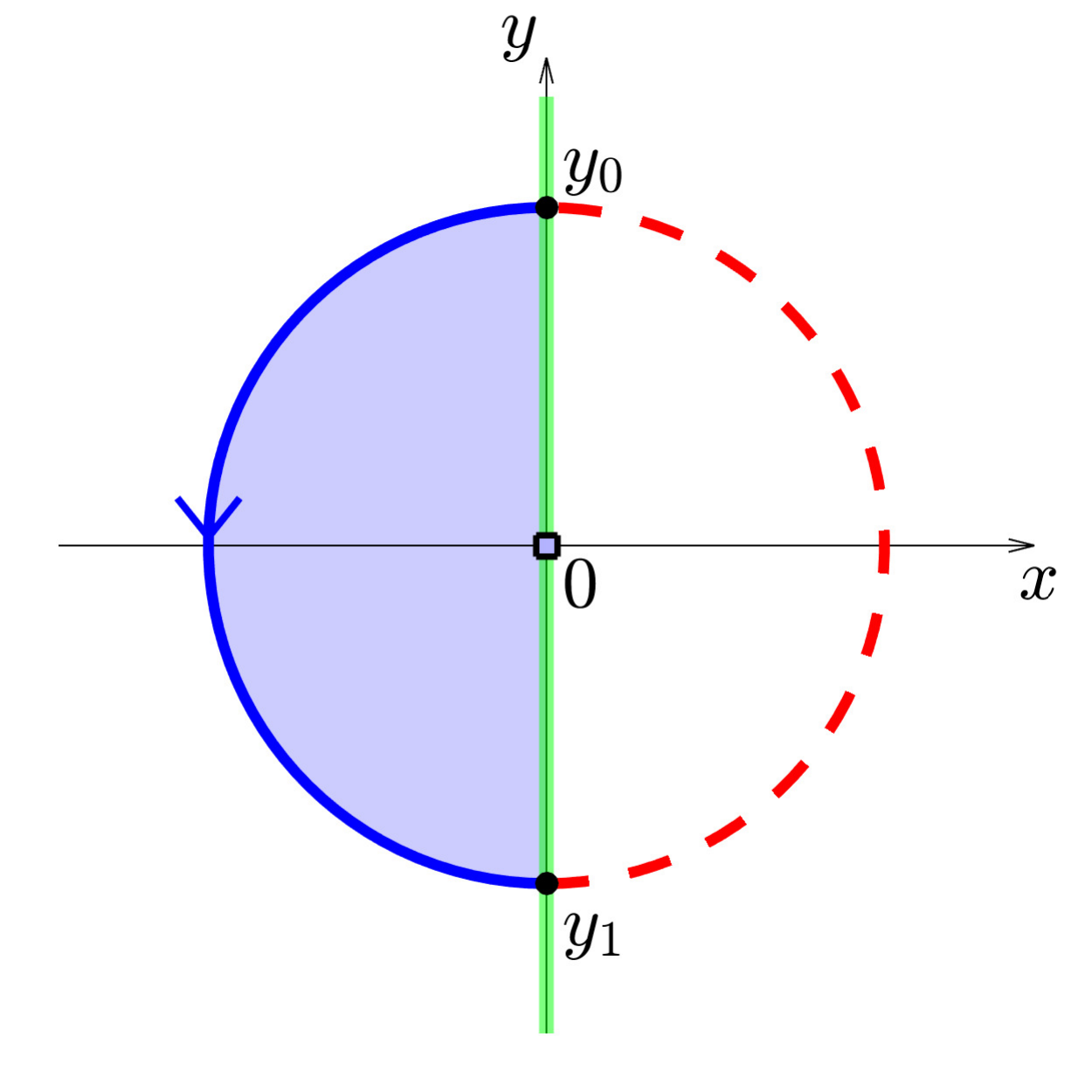}& 
     \includegraphics[width=0.33\linewidth]{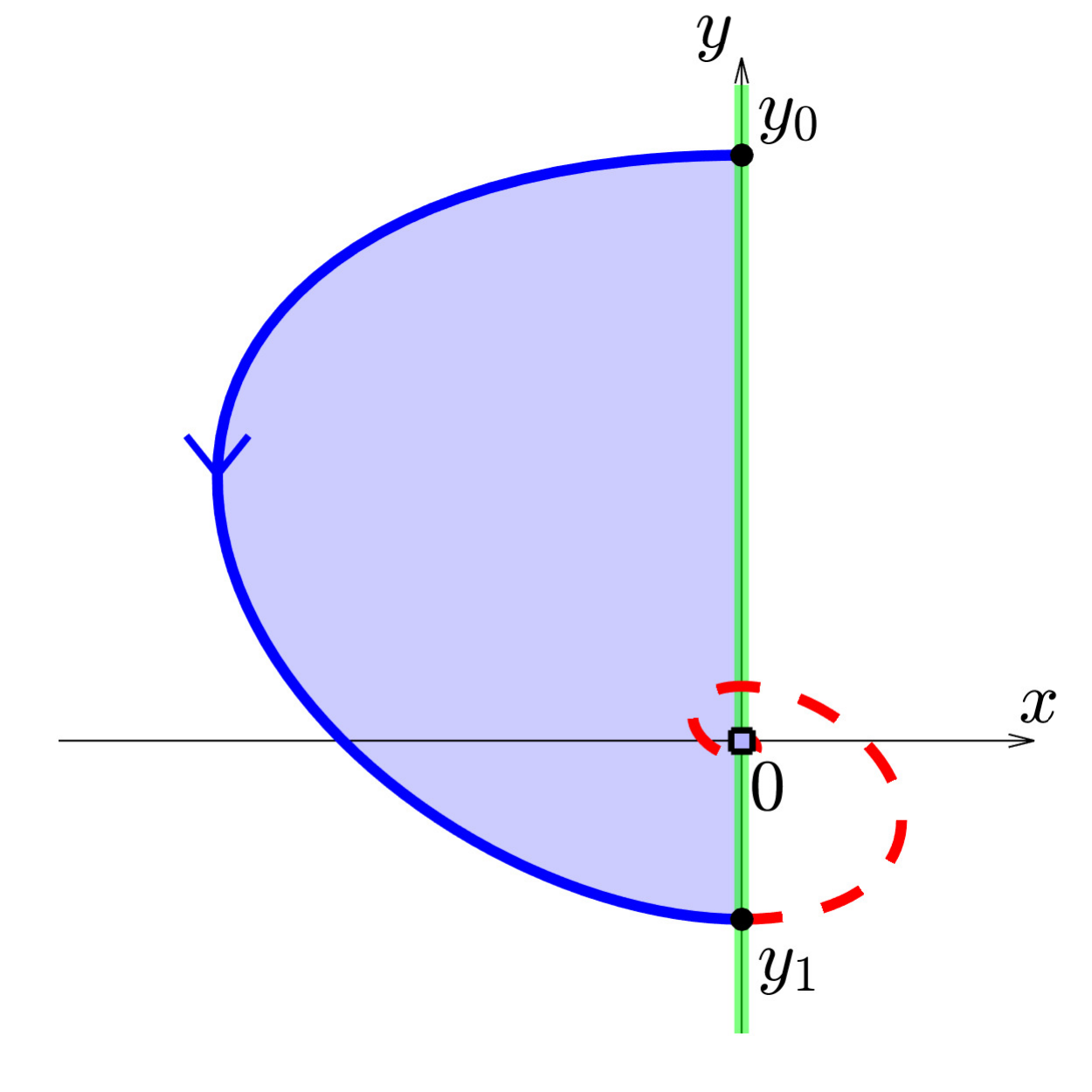}& 
     \includegraphics[width=0.33\linewidth]{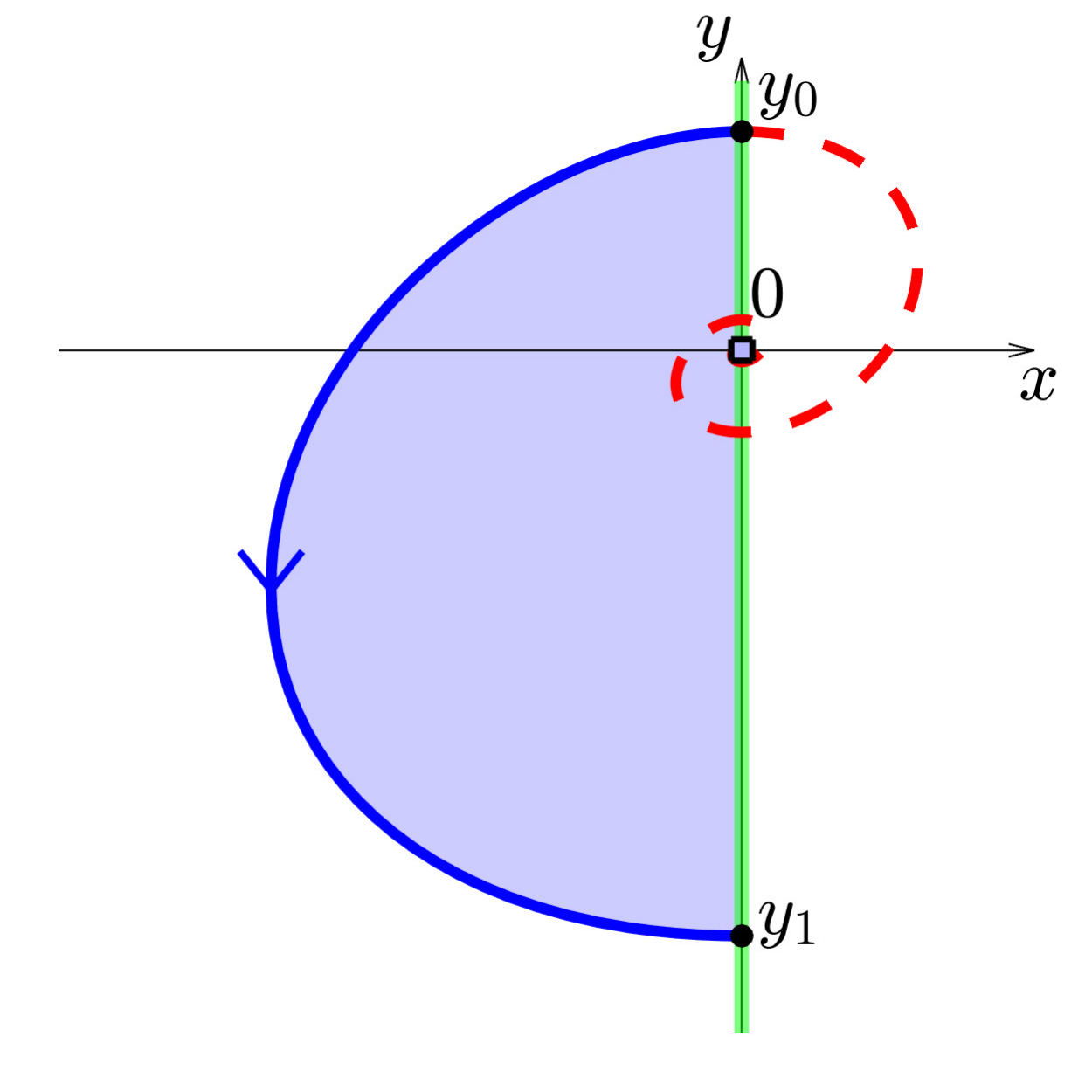}\\ 
     {\footnotesize (a)} & {\footnotesize (b)}& {\footnotesize (c)}
     \end{tabular}
    \end{center}
     \caption{Scenario (S$_1$): (a) center, (b) stable focus, (c) unstable focus.}\label{fig:casoS1}
\end{figure}

Finally, the scenario (S$_0$) includes many different cases (saddles, nodes, degenerate nodes, foci, centers, and degenerated situations without equilibria). On the one hand, for all of them there exists an invisible tangency at the origin and, thus, the left Poincar\'e half-map can be extended to $P(0)=0$ (see Remark \ref{rem:tangenciainvisible}). On the other hand, the domain and range of the left Poincar\'e half-map for foci and centers are respectively $\mathcal{D}=[0,+\infty)$ and $\mathcal{R}=(-\infty,0]$ but, for the other cases in scenario (S$_0$), the existence of invariant straight manifolds (at most two) restricts these sets. In fact, it is direct to see that invariant straight manifolds of system \eqref{eq:lienard} cannot be parallel to section $\Sigma$ and cannot contain the origin. Therefore, the intersections between all these invariant manifolds with section $\Sigma$ divide $\Sigma$ in at most three open intervals and one of them contains the origin. Thus, denoting by $\mathcal{J}$ this interval, the domain and range of the left Poincar\'e half-map for cases with invariant straight manifolds are respectively $\mathcal{D}=[0,+\infty)\cap\mathcal{J}$ and $\mathcal{R}=(-\infty,0]\cap\mathcal{J}$. See Fig.~\ref{fig:casoS0}.

\begin{figure}[!h]
    \begin{center}
    \begin{tabular}{c@{}c@{}c}
     \includegraphics[width=0.33\linewidth]{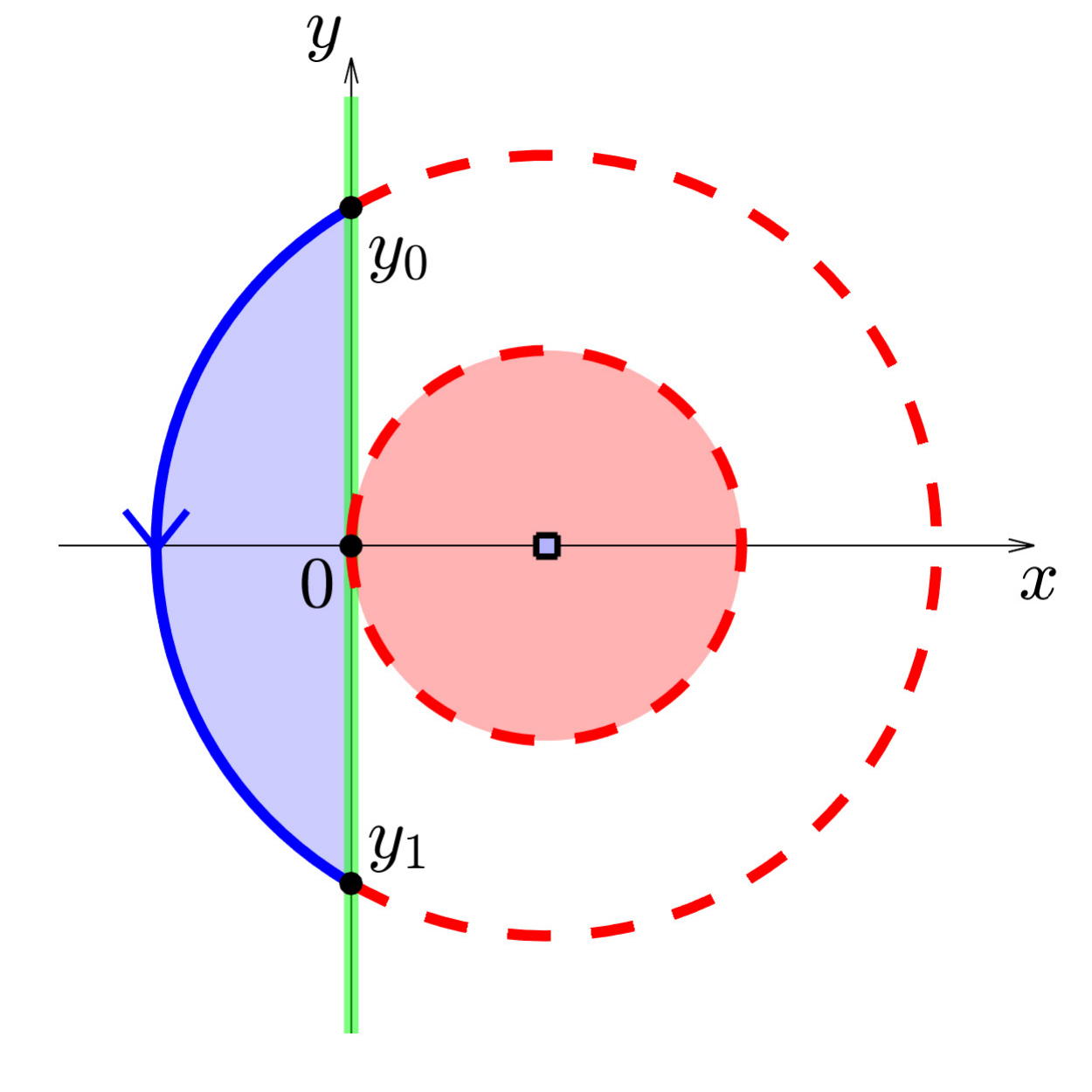}& 
     \includegraphics[width=0.33\linewidth]{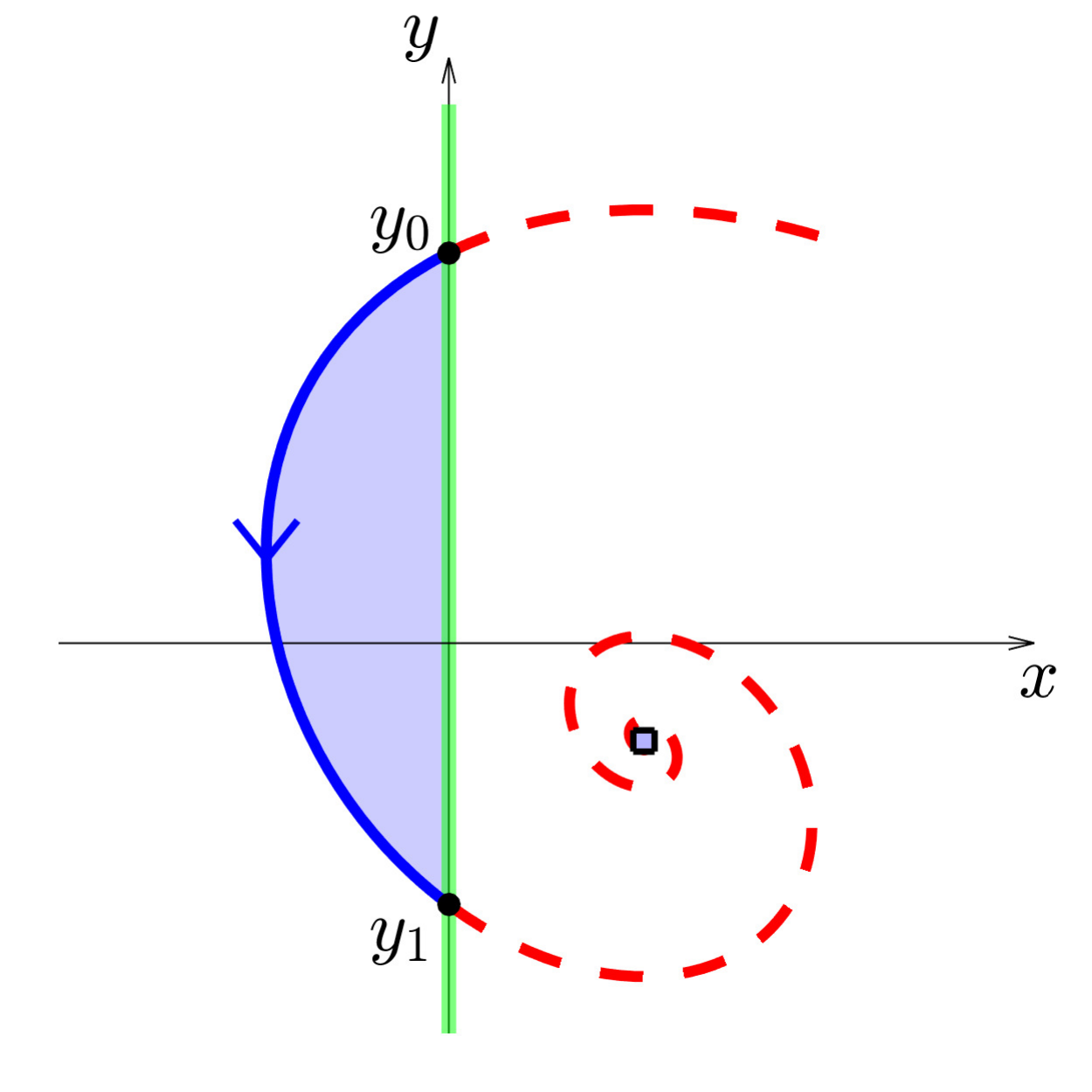}& 
     \includegraphics[width=0.33\linewidth]{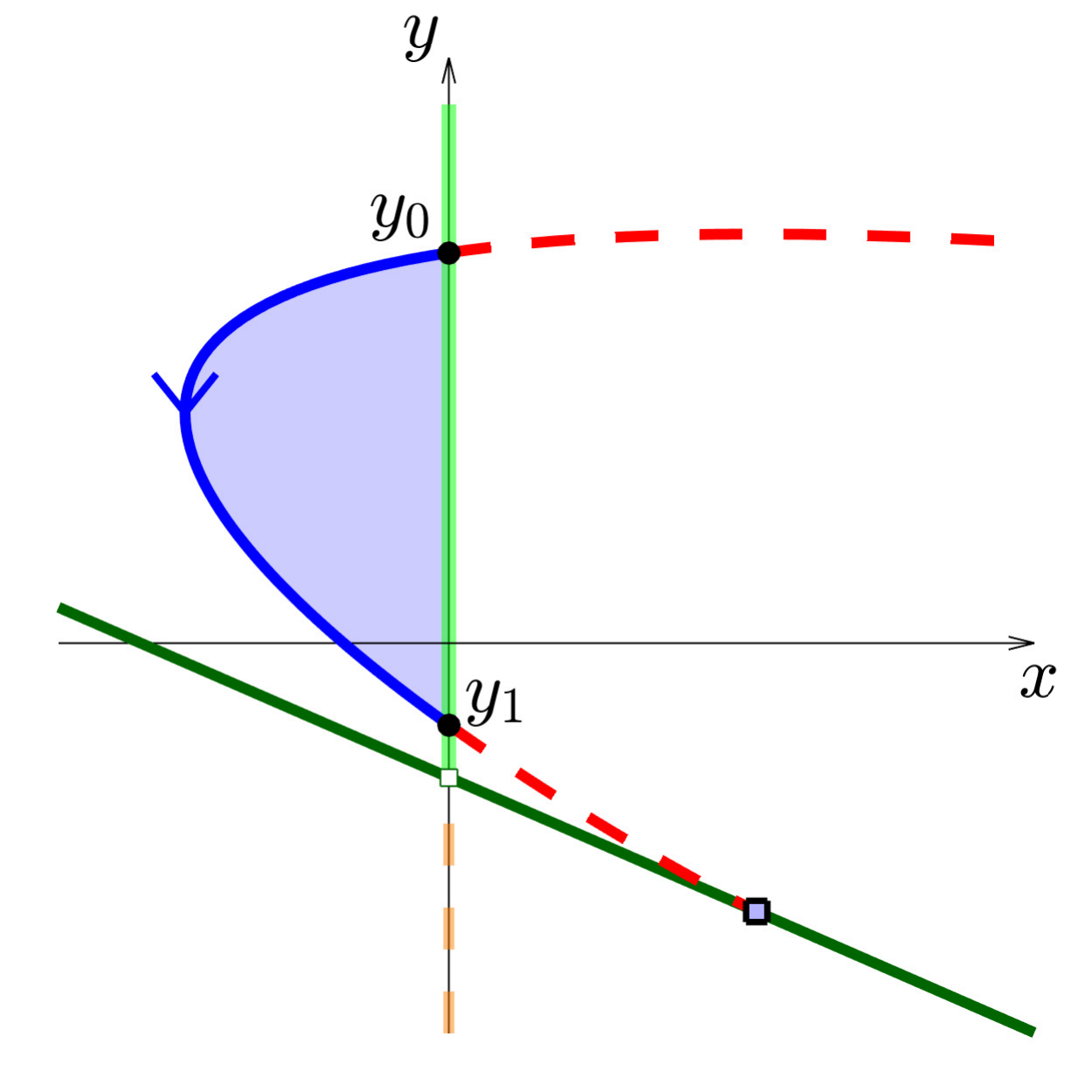}\\ 
     {\footnotesize (a)} & {\footnotesize (b)}& {\footnotesize (c)}\\
     \includegraphics[width=0.33\linewidth]{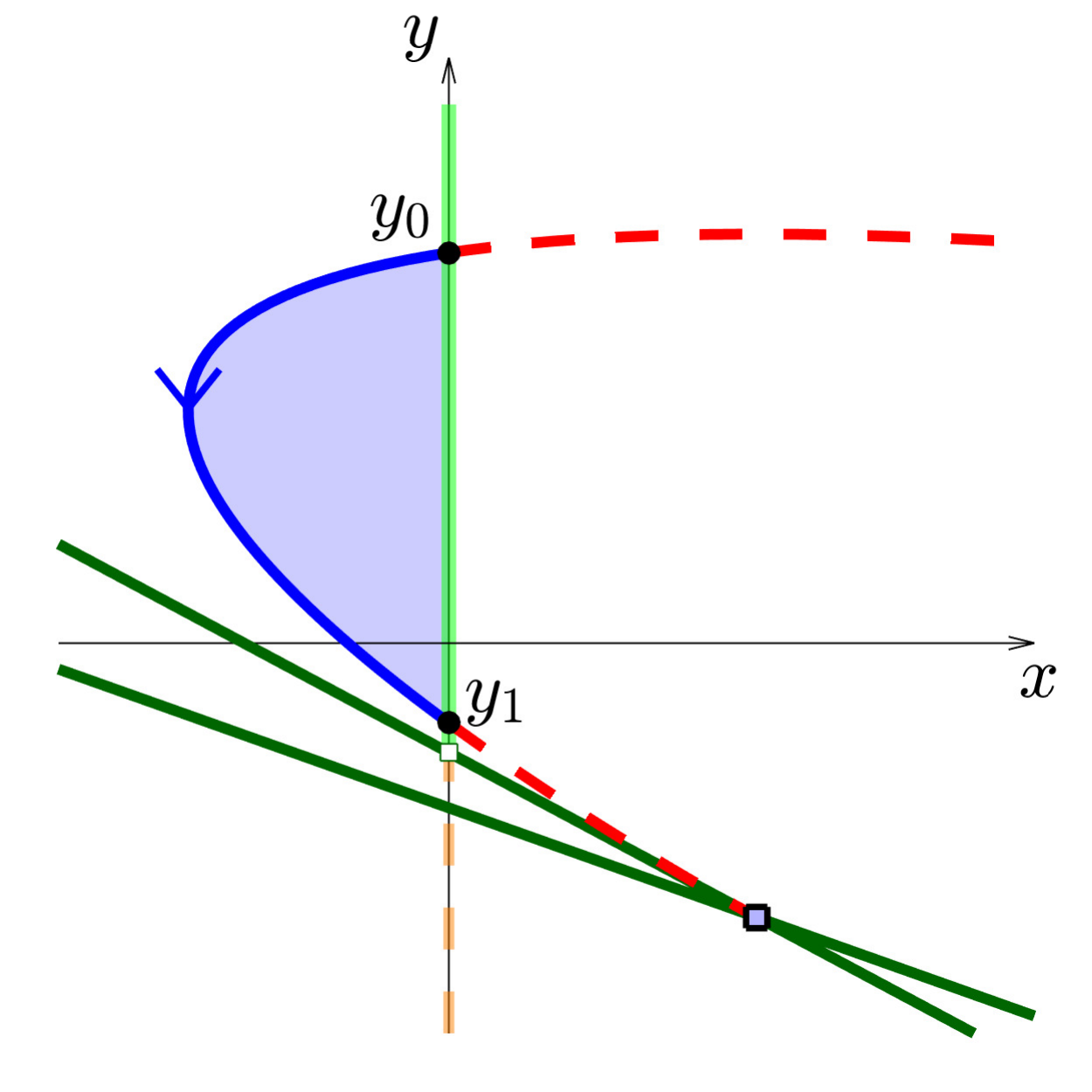}& 
     \includegraphics[width=0.33\linewidth]{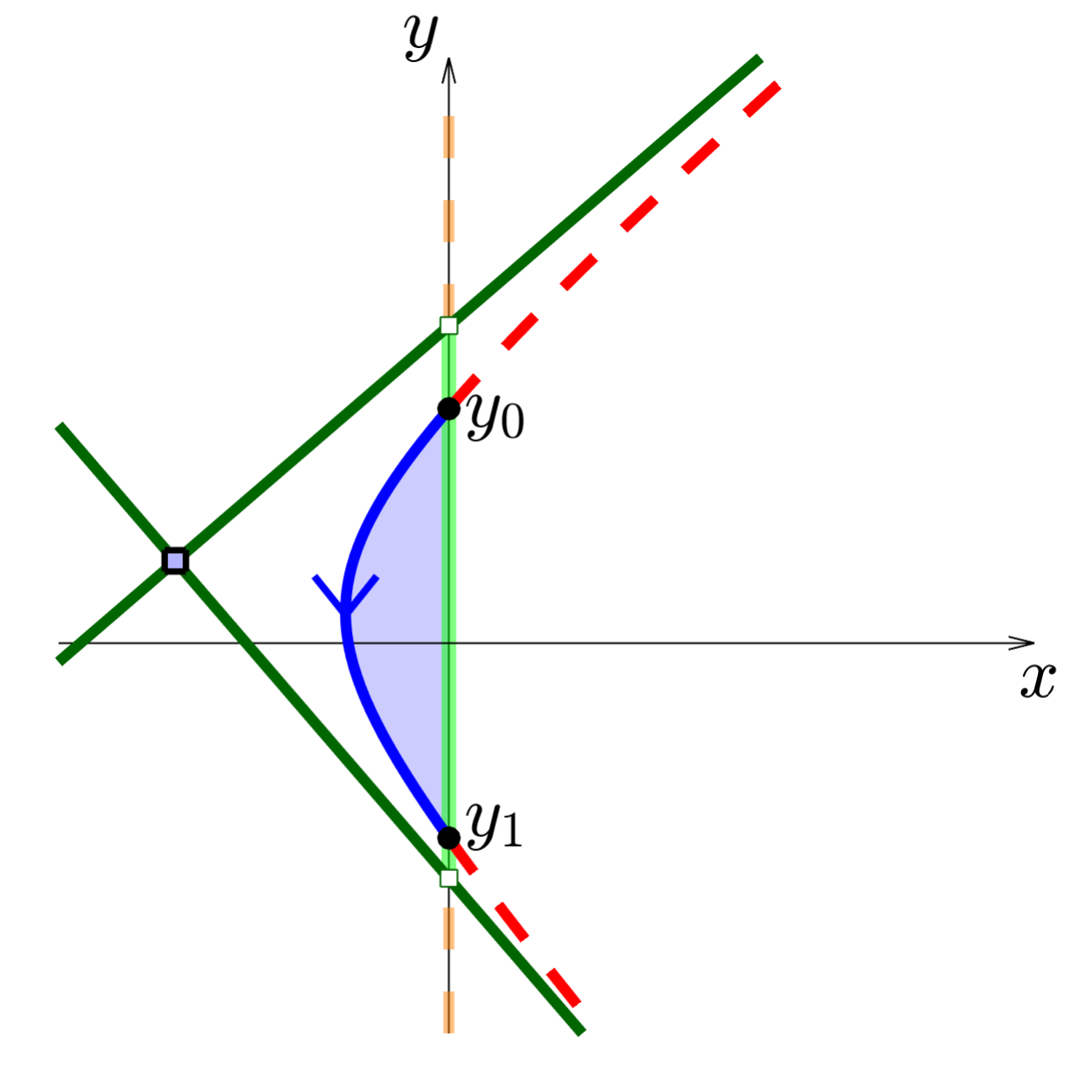}& 
     \includegraphics[width=0.33\linewidth]{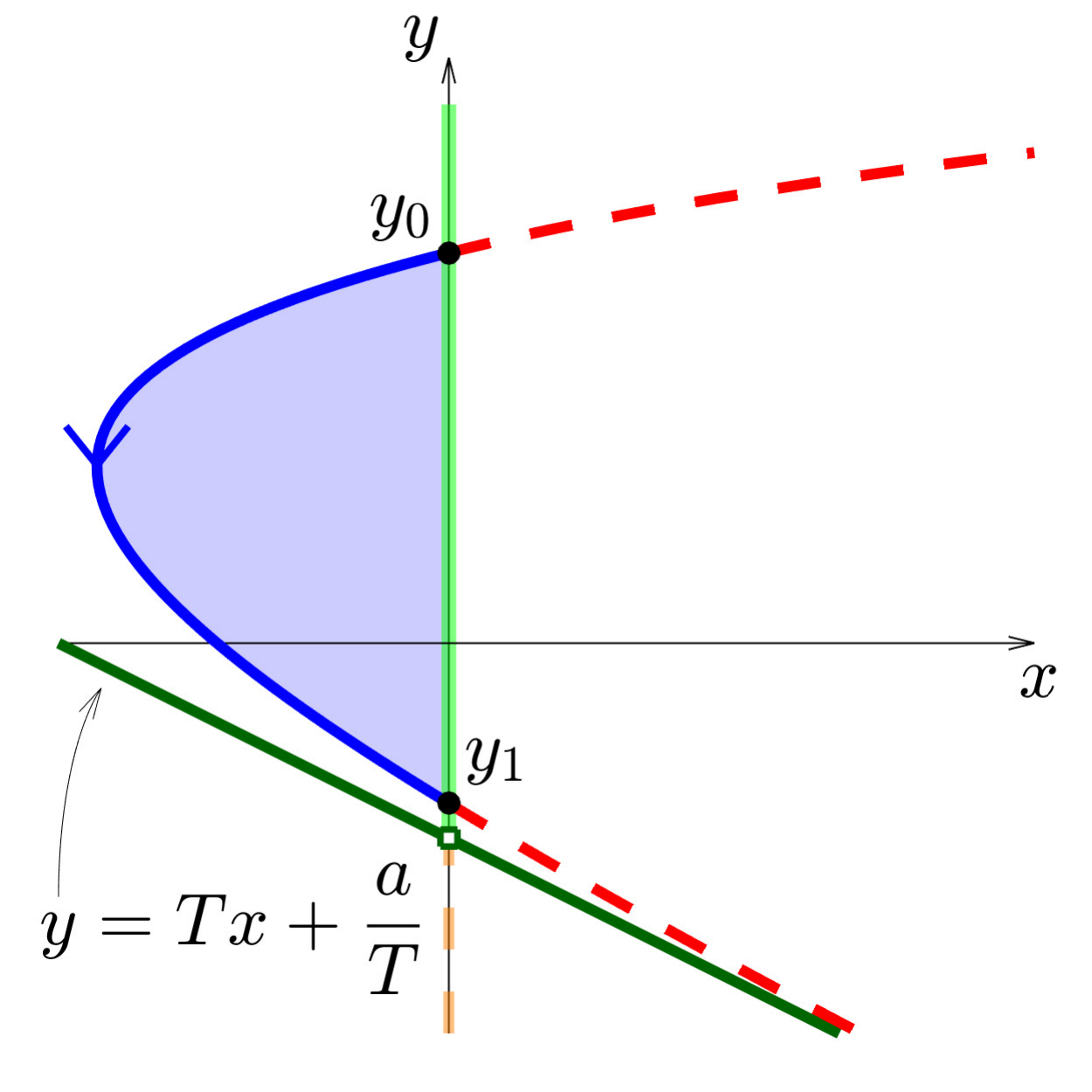}\\ 
     {\footnotesize (d)} & {\footnotesize (e)}& {\footnotesize (f)}
     \end{tabular}
    \end{center}
     \caption{Scenario (S$_0$): (a) center, (b) focus, (c) degenerate node, (d) node, (e) saddle, (f) degenerated case without equilibria.}\label{fig:casoS0}
\end{figure}

\section{Integral expression for Poincar\'e half-maps: index-like function}\label{sec:integral}

The goal of this section is to give an integral expression for the left Poincar\'e half-map of system (\ref{eq:lienard}) corresponding to section $\Sigma=\{x=0\}$. In order to achieve this aim, we use line integrals of a specific conservative vector field that is orthogonal to the flow of the system. The vector field is obtained in a convenient manner by means of a suitable inverse integrating factor.

Since inverse integrating factors are a key tool for the study of classic problems of planar smooth systems and, as far as we know, they have not been often used for piecewise systems, it is appropriate to devote a few paragraphs to present, without going into many details, some of the principal features of inverse integrating factors, particularly the basic properties and those ideas that are going to be applied to transition maps of planar linear systems. More and deeper information can be found in \cite{GarGra10}.

Let us consider the vector field $F(x,y)=(f(x,y),g(x,y))$ and the planar autonomous differential system
\begin{equation}\label{sis:fg}
\left\{\begin{array}{l}
\displaystyle\frac{dx}{dt}=f(x,y),\\
\noalign{\smallskip}
\displaystyle\frac{dy}{dt}=g(x,y),
\end{array}\right.
\end{equation}
where $f,g:\mathcal{U}\longrightarrow\mathbb{R}$ are smooth functions and $\mathcal{U}$ is a neighborhood in $\mathbb{R}^2$. 

A smooth function $V:\mathcal{U}\longrightarrow \mathbb{R}$ is an inverse integrating factor of system \eqref{sis:fg} if its zero set $V^{-1}(\{0\})=\{(x,y)\in \mathcal{U}: V(x,y)=0\}$ does not contain any non-empty open set and it satisfies the condition
\begin{equation}\label{eq:condfi}
\nabla V(x,y)\cdot F(x,y)=
V(x,y)\,\operatorname{div}F(x,y),
\end{equation}
where $\nabla V(x,y)=\left(\frac{\partial V}{\partial x}(x,y),\frac{\partial V}{\partial y}(x,y)\right)$ is the gradient of $V$,
$\mathrm{div} F(x,y)=\frac{\partial f}{\partial x}(x,y)+\frac{\partial g}{\partial y}(x,y)$ 
is the divergence of the vector field $F$ and the dot ($\cdot$) stands for the inner product.

Note that the reason why a function $V$ that satisfies the condition \eqref{eq:condfi} is called an inverse integrating factor of system \eqref{sis:fg} is that  for every $(x,y)\in \mathcal{U}\setminus V^{-1}(\{0\})$ the function $1/V$ is an integrating factor for the equation of the orbits  $g(x,y)  dx- f(x,y) dy=0$. Equivalently, the system
$$
\left\{\begin{array}{l}
\displaystyle\frac{dx}{ds}=\displaystyle\frac{f(x,y)}{V(x,y)},\\
\noalign{\smallskip}
\displaystyle\frac{dy}{ds}=\displaystyle\frac{g(x,y)}{V(x,y)},
\end{array}\right.
$$
obtained from system \eqref{sis:fg} by performing a change of the temporal variable that satisfies $ds=V(x,y)\,dt$, is 
Hamiltonian in every simply connected component of $\mathcal{U}\setminus V^{-1}(\{0\})$.

Other important results about inverse integrating factors are related to their zero sets.
Let us denote by $\Phi(t;\mathbf{p})$, the orbit of system \eqref{sis:fg} that satisfies $\Phi(0;\mathbf{p})=\mathbf{p}$. If $V$ is an inverse integrating factor of \eqref{sis:fg} then it is easy to see that the relationship
\begin{equation}\label{expr:Vphi}
V(\Phi(t;\mathbf{p}))=V(\mathbf{p})\exp\left(\int_0^t \mathrm{div} F(\Phi(s;\mathbf{p}))\,ds\right)
\end{equation}
holds.  Thus, if $V(\mathbf{p})=0$, then $V$ vanishes at the complete orbit and so the zero set of $V$ is composed of trajectories of  system \eqref{sis:fg}.

In \cite{GiaLliVia96}, it is proved that all the limit cycles of system \eqref{sis:fg} included in the domain of definition of $V$ are contained in the zero set of $V$. Moreover, under mild conditions, the separatrices of hyperbolic saddle points are also included in this set
\cite{BeGia00}. Once more, let us recommend the reading of the survey \cite{GarGra10} to deepen the knowledge of inverse integrating factors.

For the analysis in this work, a suitable inverse integrating factor must be chosen. In this case, where $L$ is the vector field given by system \eqref{eq:lienard_L}, condition \eqref{eq:condfi} is written as
\begin{equation}
\label{eq:iif_lienard}
  \nabla V \cdot L= T V,
\end{equation}
since $\operatorname{div} L=T$.

It is well-known that homogeneous linear systems have quadratic inverse integrating factors (see for instance \cite{ChaGiaGiLli99}). If the linear system is not homogeneous but has an equilibrium, a simple translation converts it into a homogeneous system so it also has a quadratic inverse integrating factor.
All the quadratic inverse integrating factors of system \eqref{eq:lienard}
are collected in the following proposition.
\begin{proposition}
The set $\mathcal{V}$ of polynomial inverse integrating factors $V(x,y)$ of degree less or equal than two for system \eqref{eq:lienard} is a finite-dimensional vector space whose dimension depends on the parameters $a$, $T$ and $D$. Concretely, the following bases $\mathcal{B}_i$ may be selected:
\begin{itemize}
\item If $a^2+D^2\neq0$ and
	\begin{itemize}
    	\item[$\circ$] $T\neq0$, then $\mathcal{B}_1=\{D^2 x^2-D T x y+D y^2+a(T^2-2 D) x-a T y+a^2\}$.
	\item[$\circ$] $T=0$, then $\mathcal{B}_2=\{1,D x^2+y^2-2 a x\}$.
	\end{itemize}
\item If $a^2+D^2=0$ and
	\begin{itemize}
    	\item[$\circ$] $T\neq0$, then $\mathcal{B}_3=\{y^2-T x y,y-Tx\}$.
	\item[$\circ$] $T=0$, then $\mathcal{B}_4=\{1,y,y^2\}$.
	\end{itemize}
\end{itemize}
\end{proposition}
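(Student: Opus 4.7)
The plan is to do a direct undetermined-coefficients computation. Write a generic degree-at-most-two polynomial
\[
V(x,y)=\alpha x^{2}+\beta xy+\gamma y^{2}+\delta x+\epsilon y+\zeta,
\]
substitute into the defining identity \eqref{eq:iif_lienard}, namely $\nabla V\cdot L = TV$ with $L(x,y)=(Tx-y,Dx-a)$, and collect coefficients of the monomials $x^{2},xy,y^{2},x,y,1$. This reduces the problem to a homogeneous linear system in the six unknowns $(\alpha,\beta,\gamma,\delta,\epsilon,\zeta)$ whose solution space is, by construction, a finite-dimensional vector subspace of polynomials; in particular the nonvanishing condition on $V^{-1}(\{0\})$ will be automatic away from trivialities, because a nonzero polynomial cannot vanish on an open set.

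Equating the six pairs of coefficients produces the relations
\[
\alpha T+\beta D=0,\qquad \alpha=\gamma D,\qquad \beta=-T\gamma,
\]
\[
\epsilon D=\beta a,\qquad \delta+T\epsilon+2\gamma a=0,\qquad \epsilon a+T\zeta=0.
\]
The first three (coming from the quadratic part) are not independent: once $\alpha=\gamma D$ and $\beta=-T\gamma$ are enforced, the $x^{2}$-relation is automatic. So the constraints essentially fix $\alpha,\beta$ in terms of $\gamma$, and the remaining three equations determine $\delta,\epsilon,\zeta$ from $\gamma$ subject to compatibility.

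The rest of the proof is a case analysis, organized precisely by which of the equations degenerate. When $T\neq 0$ and $a^{2}+D^{2}\neq 0$, the relations $\epsilon a+T\zeta=0$ and $\epsilon D=-Ta\gamma$ are nondegenerate and determine $\epsilon,\zeta,\delta$ uniquely as multiples of $\gamma$; rescaling by $D$ in the $D\neq 0$ subcase (and checking directly that the same formula survives at $D=0$ as the induced degree-one factor $a(T^{2}x-Ty+a)$) yields the single generator $D^{2}x^{2}-DTxy+Dy^{2}+a(T^{2}-2D)x-aTy+a^{2}$ of $\mathcal{B}_{1}$. If $T=0$ but $a^{2}+D^{2}\neq 0$, then $\beta=0$ and the equations $\epsilon D=0$, $\epsilon a=0$ force $\epsilon=0$, while $\zeta$ becomes free and $\delta=-2\gamma a$, giving the two generators of $\mathcal{B}_{2}$. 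When $a^{2}+D^{2}=0$ and $T\neq 0$, all equations involving $a$ or $D$ collapse, the top-degree part contributes the relation $\beta=-T\gamma$ with $\alpha=0$, and the linear part independently contributes $\delta=-T\epsilon$, $\zeta=0$, yielding exactly $\mathcal{B}_{3}$. The fully degenerate case $a=D=T=0$ leaves only $\alpha=\beta=\delta=0$, so $\gamma,\epsilon,\zeta$ are free and one recovers $\mathcal{B}_{4}=\{1,y,y^{2}\}$.

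I expect no real obstacle beyond clean bookkeeping: the only subtlety is the boundary subcase $D=0$ inside case (i), where the generator of $\mathcal{B}_{1}$ drops to degree one; this must be checked by plugging the resulting polynomial into \eqref{eq:iif_lienard} to confirm that it still satisfies the identity and no further factor has been lost. The claim on dimensions then follows by counting free parameters in each case: $1,2,2,3$ respectively.
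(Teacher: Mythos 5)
Your proposal is correct and is exactly the paper's argument: the paper's proof is precisely the undetermined-coefficients computation, imposing condition \eqref{eq:iif_lienard} on a generic quadratic polynomial and reading off the case-by-case solution spaces. Your worked-out coefficient equations and the handling of the boundary subcase $D=0$, $a\neq0$, $T\neq0$ (where the generator of $\mathcal{B}_1$ degenerates to $a(T^2x-Ty+a)$) are accurate, so you have simply made explicit what the paper calls straightforward.
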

\begin{proof} The proof is straightforward by imposing that the generic real quadratic polynomial in two variables
$$
V(x,y)=\sum_{0\le i+j\le2} \alpha_{ij} x^i y^j
$$
satisfies condition \eqref{eq:iif_lienard}.
\end{proof}

Except in the case $a^2+D^2\neq0$, $T\neq0$, the dimension of vector space $\mathcal{V}$ is greater than one, so there exist linearly independent inverse integrating factors. Since the division of two integrating factors is constant on orbits, the division of two linearly independent inverse integrating factors is a first integral of the system. In that case, all the orbits of the system can be obtained as the level curves of these quotients.

For $T=0$, system \eqref{eq:lienard} is reversible (invariant under the change $y\leftrightarrow-y$, $t\leftrightarrow-t$). It is also hamiltonian. In fact, constant polynomials are integrating factors (see the constant polynomial $1$ in the bases $\mathcal{B}_2$ and $\mathcal{B}_4$). From this and the previous paragraph, any inverse integrating factor is constant along the orbits of system \eqref{eq:lienard}.

In addition to these two comments, let us mention that the linear combination $a^2 \cdot 1+ D \cdot \left(D x^2+y^2-2 a x\right)$ of elements of the basis $\mathcal{B}_2$ could also be obtained from the unique element of the basis $\mathcal{B}_1$ if $T$ were allowed to vanish. As a conclusion, 
we choose
\begin{equation}
\label{eq:fii}
  V(x,y)=D^2 x^2-D T x y+D y^2+a(T^2-2 D) x-a T y+a^2
\end{equation}
as the expression of the inverse integrating factor for system \eqref{eq:lienard} under condition
\begin{equation}
  \label{eq:cond_aD}
  a^2+D^2\neq0.
\end{equation}

\begin{remark} \label{rem:elipses}
Trivially, the level curves of the inverse integrating factor $V$ are conics. In particular, when $4 D-T^2>0$, they are ellipses whose center is the equilibrium point of system \eqref{eq:lienard} and the change of variables
\begin{equation}
\label{eq:cambioelipse}
\left\{
\begin{array}{l}
\displaystyle x=X+\frac{a}{D},\\
\noalign{\smallskip}
\displaystyle y= \alpha X+ \beta Y+\frac{a T}{D}
\end{array}
\right.
\end{equation} 
for $
\alpha=T/2
$,
$
\beta=\sqrt{4 D-T^2}/2
$
transforms the inverse integrating factor into
$
\widetilde{V}(X,Y)=\beta^2 (\alpha^2+\beta^2) \left(X^2+Y^2\right)=\frac{4D^2-T^2 D}{4} \left(X^2+Y^2\right)
$. Moreover, in this case, when $T<0$ the inverse integrating factor $V$ is a Lyapunov function for system \eqref{eq:lienard}, see equation 
\eqref{eq:iif_lienard}.

\end{remark}

Note that, from now on, the study will be restricted to case \eqref{eq:cond_aD}
since for $a^2+D^2=0$ Poincar\'e half-maps to section $\Sigma=\left\{x=0\right\}$ of system \eqref{eq:lienard} cannot exist. This is an immediate conclusion from the fact that the component $y$ of every solution of system \eqref{eq:lienard} is constant and, therefore, reinjection into $\Sigma$ is not possible. Moreover, as it was said in the previous section, for $a^2+D^2=0$ the straight line $y=Tx$ is foliated by equilibrium points.

Now that a suitable inverse integrating factor has been chosen, 
as it has been said in the introduction, it is important to determine the zero set $V^{-1}(\{0\})$.
The following proposition describes it.
\begin{proposition}
\label{proposition:zeroset}
Depending on the parameters of system \eqref{eq:lienard} and under condition \eqref{eq:cond_aD}, the zero set $V^{-1}(\{0\})$ of function $V$ given in \eqref{eq:fii} is the empty set, a single point (the equilibrium point of system \eqref{eq:lienard}), a single straight line (invariant for system \eqref{eq:lienard}) or a pair of crossing straight lines (the invariant manifolds of the equilibrium point of system \eqref{eq:lienard}). Concretely:
\begin{itemize}
\item For $D=0$ (no equilibrium case) and
	\begin{itemize}
	\item[$\circ$] $T=0$, then $V^{-1}(\{0\})=\emptyset$.
	\item[$\circ$] $T\neq0$, then $V^{-1}(\{0\})=\{(x,y)\in\mathbb{R}^2: \ T^2 x-Ty+a=0\}$.
	\end{itemize}
\item For $D\neq0$ (equilibrium at $(x,y)=(a/D,aT/D)$) and
	\begin{itemize}
	\item[$\circ$] $T^2-4D>0$, then \newline $V^{-1}(\{0\})=\{(x,y)\in\mathbb{R}^2: \ 2 D \left(x-\frac{a}{D}\right)=\left(T\pm\sqrt{T^2-4D} \right) \left(y-\frac{aT}{D}\right)\}$.
	\item[$\circ$] $T^2-4D=0$, then \newline $V^{-1}(\{0\})=\{(x,y)\in\mathbb{R}^2: \ 2 D \left(x-\frac{a}{D}\right)=T \left(y-\frac{aT}{D}\right)\}$.
	\item[$\circ$] $T^2-4D<0$, then $V^{-1}(\{0\})=\{(a/D,aT/D)\}$.
	\end{itemize}
\end{itemize}
\end{proposition}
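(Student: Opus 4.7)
The plan is to analyze the polynomial $V$ case-by-case according to whether the equilibrium exists, i.e., whether $D=0$ or $D\neq0$. Since the claims about invariance of the zero set under the flow of \eqref{eq:lienard} follow immediately from the identity \eqref{expr:Vphi} (the zero set of any inverse integrating factor is a union of orbits), what actually needs to be done is the elementary algebraic classification of the locus $V=0$.

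First I would dispose of the case $D=0$. Substituting $D=0$ into \eqref{eq:fii} collapses $V$ to $V(x,y)=aT^2x-aTy+a^2=a\left(T^2x-Ty+a\right)$. Under condition \eqref{eq:cond_aD} the vanishing of $D$ forces $a\neq0$, so $V^{-1}(\{0\})$ coincides with the vanishing locus of $T^2x-Ty+a$. If $T=0$ this equation reads $a=0$, which has no solutions, giving the empty set; if $T\neq0$ it defines a single straight line, precisely the one in the statement.

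For the case $D\neq0$, the equilibrium is the point $\mathbf{p}^\ast=(a/D,aT/D)$, and I would translate coordinates by setting $X=x-a/D$, $Y=y-aT/D$. A routine expansion (the linear part of $V$ is tailored so that this substitution annihilates both the linear and constant terms, since $V$ must remain an inverse integrating factor of the linear homogeneous system obtained after translation) yields
\begin{equation*}
V=D\bigl(DX^2-TXY+Y^2\bigr).
\end{equation*}
Since $D\neq0$, the zero set is determined by the binary quadratic form $q(X,Y)=DX^2-TXY+Y^2$, whose discriminant (viewed as a quadratic in $X$, with $D\neq0$ as leading coefficient) equals $T^2-4D$. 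Solving $q=0$ for $X$ in terms of $Y$ gives $2DX=(T\pm\sqrt{T^2-4D})\,Y$, which undoes the translation to the form stated in the proposition. The three subcases are then immediate: when $T^2-4D>0$ the form factors over $\mathbb{R}$ into two distinct linear factors and yields two crossing lines through $\mathbf{p}^\ast$; when $T^2-4D=0$ it is a perfect square and yields a single (double) line through $\mathbf{p}^\ast$; when $T^2-4D<0$ the form is definite (its matrix has positive determinant $D-T^2/4>0$ and positive $(2,2)$-entry equal to $1$), so $q$ vanishes only at the origin in $(X,Y)$, i.e., only at $\mathbf{p}^\ast$.

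Finally, I would add one sentence to confirm the invariance / dynamical interpretation. By \eqref{expr:Vphi}, if $V(\mathbf{q})=0$ then $V$ vanishes along the entire orbit through $\mathbf{q}$; combined with the explicit description above, the single line in the $D=0,T\neq0$ case is therefore an invariant straight line, and the lines obtained in the $D\neq0$ subcases are invariant straight manifolds through the equilibrium, hence the eigenspaces of $A$ corresponding to its real eigenvalues (two distinct ones when $T^2-4D>0$, a repeated one when $T^2-4D=0$). No genuine obstacle appears: the argument is just a controlled computation plus the general property of inverse integrating factors already recorded earlier in the section.
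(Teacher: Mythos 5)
Your proof is correct, and the $D=0$ half is exactly the paper's argument ($V=a(T^2x-Ty+a)$ with $a\neq0$ forced by \eqref{eq:cond_aD}). For $D\neq0$ you take a genuinely different, more computational route: you translate to the equilibrium, verify that $V=D\bigl(DX^2-TXY+Y^2\bigr)$, and classify that binary quadratic form by its discriminant and definiteness, then append the invariance statement via \eqref{expr:Vphi}. The paper instead observes the identity
$$
V(x,y)=-D\,\det\!\left(
A\begin{pmatrix} x-\tfrac aD\\ y-\tfrac{aT}{D}\end{pmatrix}
\,\Big|\,
\begin{pmatrix} x-\tfrac aD\\ y-\tfrac{aT}{D}\end{pmatrix}
\right),
$$
which is the same quantity you computed, but reads off the zero set at once as the union of the real eigenspaces of $A$ translated to the equilibrium; the trichotomy in $\operatorname{sign}(T^2-4D)$ then comes from the spectrum of $A$, and the parenthetical claims of the statement (that the lines are the invariant manifolds through the equilibrium) are obtained for free, without invoking \eqref{expr:Vphi}. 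Your approach buys elementarity (only the quadratic formula and positive definiteness), at the cost of having to add the invariance argument separately; the paper's buys the dynamical interpretation directly. One small imprecision in your write-up: the discriminant of $q$ as a quadratic in $X$ is $(T^2-4D)Y^2$, not $T^2-4D$; since the form is homogeneous this changes nothing (the $\pm$ absorbs the sign of $Y$ and the stated lines $2DX=(T\pm\sqrt{T^2-4D})\,Y$ are still exactly the zero locus), but the sentence should be stated that way.
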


\begin{proof}
For $D=0$, condition \eqref{eq:cond_aD} implies that $a\neq0$ and the inverse integrating factor is $V(x,y)=a\left(T^2 x-Ty+a\right)$. Thus the conclusion is obvious.

For $D\neq0$, the inverse integrating factor $V(x,y)$ can be written as 
$$
  V(x,y) =-D \; \det
  \left(
  A 
  \left.
  \left(
  \begin{array}{c}
  x-\frac a D\\ \noalign{\smallskip}
  y-\frac{a T}D
  \end{array}
  \right)\right|
   \left(
  \begin{array}{c}
  x-\frac a D\\ \noalign{\smallskip}
  y-\frac{a T}D
  \end{array}
  \right)
  \right).
$$
Therefore, V vanishes if, and only if, the vector $( x-\frac a D,y-\frac{a T}D)^T$ belongs to a real eigenspace of matrix $A$. From this, the proof is direct.
\end{proof}

Since the expression given in this work for the left Poincar\'e half-map in terms of the inverse integrating factor involves divergent integrals at zero (concretely, when the equilibrium of system \eqref{eq:lienard} is located at the origin), it is necessary to use the concept of Cauchy principal value that, particularized to divergences at zero, is defined as follows. Let be $h$ a continuous function in $[\xi_1,\xi_2]\setminus\{0\}$, where $\xi_1<0<\xi_2$. The \emph{Cauchy Principal Value} (PV) of integral $\int_{\xi_1}^{\xi_2} h(\xi) d\xi$ is the following limit (if it exists):
\begin{equation}
\label{eq:PV}
  \operatorname{PV}\int_{\xi_1}^{\xi_2} h(\xi) d\xi :=
  \lim_{\varepsilon \searrow 0} \left(\int_{\xi_1}^{-\varepsilon} h(\xi) d\xi+ \int_{\varepsilon}^{\xi_2} h(\xi) d\xi\right).
\end{equation}
Obviously, if $h$ is also continuous at $\xi=0$ then the Cauchy principal value coincides with the value of the integral. By convention,
it is said that $\operatorname{PV}\int_{\xi_2}^{\xi_1} h(\xi) d\xi=-\operatorname{PV}\int_{\xi_1}^{\xi_2} h(\xi) d\xi$.
 In
\cite{GaLliMaMa00,GaMaMa02}, the Cauchy principal value is applied to the analysis of monodromy and the study of the centre problem for 
some types of planar systems.

The first main theorem of the manuscript is based on the integration of vector field 
\begin{equation}
\label{eq:campoortogonal}
G(x,y)=\left(-\frac{D x-a}{V(x,y)}, \frac{Tx-y}{V(x,y)}\right)
\end{equation}
along a suitable Jordan curve for the different scenarios given in Definition \ref{def:escenarios}.
Notice that vector field $G$ is orthogonal to the flow of system \eqref{eq:lienard} and conservative in every simply connected component
of $\mathbb{R}^2\setminus V^{-1}(\{0\})$.

\begin{remark}
\label{rem:ortogonal}
In order to choose an orientation, from now on we denote $(x,y)^{\perp}=(-y,x)$. Thus, when $V$ does not vanish,
it holds that $G=\frac{L^{\perp}}{V}$ where $L$ is the vector field defined in \eqref{eq:lienard_L}. 
Since condition \eqref{eq:iif_lienard} can be equivalently written as $L^{\perp} \cdot \nabla V^{\perp}= T V$,
the equality
\begin{equation}
\label{eq:iif_ortogonal}
G \cdot \nabla V^{\perp}= T
\end{equation}
is satisfied.
\end{remark}

\begin{remark}
\label{rem:windingnumber}
Let be $\Delta$ any piecewise-smooth planar curve that does not intersect the zero set $V^{-1}(\{0\})$ (i.e., it does not intersect the invariant manifolds or the equilibrium of system \eqref{eq:lienard}, if exist).

Let us consider the case $4D-T^2>0$ and let be $\widetilde{\Delta}$, in coordinates $(X,Y)$, the image of the curve $\Delta$ by the change of variables given in \eqref{eq:cambioelipse}. Let be $(X_a,Y_a)$ and $(X_b,Y_b)$ the first and last points of the curve $\widetilde{\Delta}$. Then, it is trivial that
$$
\begin{array}{rcl}
\int_{{\Delta}} G \cdot d\mathbf{r} \!&=&\! \displaystyle
 \frac{-1}{D}\int_{{\widetilde\Delta}}\frac{X \,dX+YdY}{X^2+Y^2}+
\frac{T}{D\sqrt{4D-T^2}} \left( 
\int_{\widetilde{\Delta}}  \frac{X \,dY-Y \, dX}{X^2+Y^2} \right)\\
\noalign{\smallskip}
 \!&=&\! \displaystyle
 \frac{-1}{D} \log\left.\left(X^2+Y^2\right)\right|_{(X_a,Y_a)}^{(X_b,Y_b)}+
 \frac{T}{D\sqrt{4D-T^2}} \left( 
\int_{\widetilde{\Delta}}  \frac{X \,dY-Y \, dX}{X^2+Y^2} \right).
 \end{array}
$$

Notice that when $V|_{\Delta}$ is constant or $\Delta$ is a closed curve, the first summand vanishes and
$$
\int_{{\Delta}} G \cdot d\mathbf{r} = \displaystyle
 \frac{T}{D\sqrt{4D-T^2}} \left( 
\int_{\widetilde{\Delta}}  \frac{X \,dY-Y \, dX}{X^2+Y^2} \right).
$$
Therefore, when $\Delta$ is a closed curve then 
$$
\oint_{{\Delta}} G \cdot d\mathbf{r} = \displaystyle
 \frac{2 \pi T}{D\sqrt{4D-T^2}} \left( \frac{1}{2 \pi}
\oint_{\widetilde{\Delta}}  \frac{X \,dY-Y \, dX}{X^2+Y^2} \right),
$$
that is, $2 \pi T\left(D\sqrt{4D-T^2}\right)^{-1}$ times the index (or winding number) of the curve $\widetilde{\Delta}$ around $(X,Y)=(0,0)$
or, equivalently, times the index of the curve $\Delta$ around the equilibrium point.

For the case $4D-T^2\leqslant0$, when $\Delta$ is a closed curve, due to the conservativeness of vector field $G$, it is clear that 
$$
\oint_{{\Delta}} G \cdot d\mathbf{r} =0.
$$
This can also be understood as the index of the curve $\Delta$ around the equilibrium, if it exists, or any other point not surrounded by $\Delta$.
\end{remark}

Now, we are in a position to present and prove a theorem that states a new way to write, in terms of an integral expression, the existing relationship between a value and its image by means of the left Poincar\'e half-map.

\begin{theorem}
\label{th:implicaderecha}
Let assume that condition \eqref{eq:cond_aD} and hypothesis (H) hold. Let be $y_1=P(y_0)$ the image of $y_0$ by the left Poincar\'e half-map, 
$V$ the inverse integrating factor given in expression \eqref{eq:fii}, 
$\Gamma$ the Jordan curve given in Eq.~\eqref{eq:Jordancurve} and (S$_k$), $k\in\{0,1,2\}$,
the corresponding scenario given in Definition \ref{def:escenarios}. Then
\begin{equation}
\label{eq:curvasnivel}
  \operatorname{PV}\int_{y_1}^{y_0} \frac{-y}{V(0,y)} dy =d_k, 
\end{equation}
where
$$
  \displaystyle 
  d_k=\left\{
  \begin{array}{cl}
  0 & \mbox{if } \ k=0,\\
  \noalign{\medskip}
  \displaystyle  \frac{k\pi T}{D\sqrt{4D-T^2}} & \mbox{if } \ k=1,2.
  \end{array}
  \right.
$$
\end{theorem}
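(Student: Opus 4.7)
The strategy is to integrate the conservative vector field $G=L^{\perp}/V$ around the Jordan curve $\Gamma=\Gamma_{1}\cup\Gamma_{2}$ of \eqref{eq:Jordancurve} and to compare two ways of evaluating the resulting loop integral. First I would split $\oint_{\Gamma}G\cdot d\mathbf{r}$ into the contributions over $\Gamma_{2}$ and $\Gamma_{1}$. On $\Gamma_{2}$ the tangent is collinear with $L$, so $G\cdot d\mathbf{r}=0$ identically and $\int_{\Gamma_{2}}G\cdot d\mathbf{r}=0$. On $\Gamma_{1}\subset\Sigma$, parametrized by $(0,y)$, only the $y$-component of $G$ contributes and, taking the orientation induced by the flow on $\Gamma_{2}$, this contribution equals $\int_{y_{1}}^{y_{0}}\frac{-y}{V(0,y)}\,dy$ (to be read as a Cauchy principal value in (S$_{1}$), where $V(0,y)$ has its only zero on $[y_{1},y_{0}]$ at the origin). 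The theorem therefore reduces to showing $\oint_{\Gamma}G\cdot d\mathbf{r}=d_{k}$ in each scenario.

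For (S$_{0}$), I would argue that $\Gamma\cup\operatorname{Int}(\Gamma)$ is disjoint from $V^{-1}(\{0\})$. By Proposition~\ref{proposition:zeroset} this zero set is empty, a single non-interior point, or a union of invariant straight lines; uniqueness of solutions prevents such lines from meeting the orbit arc $\Gamma_{2}$, the domain interval $\mathcal{J}$ introduced in Section~\ref{sec:poincare} keeps them away from $\Gamma_{1}$, and as unbounded straight lines they cannot be contained in $\operatorname{Int}(\Gamma)$ without crossing $\Gamma$. Thus $G$ is conservative on a simply connected neighborhood of $\Gamma\cup\operatorname{Int}(\Gamma)$, and $\oint_{\Gamma}G\cdot d\mathbf{r}=0=d_{0}$. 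For (S$_{2}$), the equilibrium lies inside $\Gamma$; since $\Gamma$ is a Jordan curve closing on $\Sigma$ around it, the only admissible phase portraits are centre and focus, so $4D-T^{2}>0$. Proposition~\ref{proposition:zeroset} then reduces $V^{-1}(\{0\})$ to the equilibrium alone, which $\Gamma$ avoids, and a check of the flow at $(0,y_{0})$ shows that $\Gamma$ is traced counter-clockwise, with winding number $+1$. The index formula of Remark~\ref{rem:windingnumber} yields $\oint_{\Gamma}G\cdot d\mathbf{r}=\tfrac{2\pi T}{D\sqrt{4D-T^{2}}}=d_{2}$.

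The delicate case, and the principal obstacle, is (S$_{1}$), where the origin lies on $\Gamma_{1}$ and makes the integrand genuinely singular. I would regularize by replacing the piece of $\Gamma_{1}$ between $(0,-\varepsilon)$ and $(0,\varepsilon)$ by a small semicircular arc of radius $\varepsilon$ in $\{x>0\}$, traversed so as to preserve the orientation of $\Gamma$. The resulting Jordan curve $\Gamma^{\varepsilon}$ avoids $V^{-1}(\{0\})=\{(0,0)\}$ and encloses the origin, so the (S$_{2}$) analysis gives $\oint_{\Gamma^{\varepsilon}}G\cdot d\mathbf{r}=d_{2}$. Parametrizing the semicircle by $(\varepsilon\cos\theta,\varepsilon\sin\theta)$ with $\theta\in(-\pi/2,\pi/2)$, and using that $a=0$ in (S$_{1}$), the angular integrand $G\cdot d\mathbf{r}/d\theta$ is $\varepsilon$-independent and $\pi$-periodic in $\theta$, so its integral over any half-turn equals half of its integral over a full turn, namely $d_{2}/2=d_{1}$. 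Letting $\varepsilon\to 0$, the integral over the remaining portion of $\Gamma_{1}$ converges by definition to $\operatorname{PV}\int_{y_{1}}^{y_{0}}\frac{-y}{V(0,y)}\,dy$, and the identity $d_{2}=0+\operatorname{PV}(\cdot)+d_{1}$ forces this principal value to equal $d_{2}-d_{1}=d_{1}$.

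The technical points I expect to need care are the orientation bookkeeping in each scenario (especially when forming $\Gamma^{\varepsilon}$), the verification that invariant straight manifolds cannot intrude into $\operatorname{Int}(\Gamma)$ in (S$_{0}$), and the short symmetry argument for the $\pi$-periodicity of the angular integrand in (S$_{1}$), which rests on the quadratic homogeneity of $V$ and the linearity of $L$ about the origin. Each step is a brief calculation, but together they are what allows the three scenarios to be handled by a single integral identity.
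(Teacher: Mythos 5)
Your proposal is correct and, in its overall architecture, is the paper's proof: the same loop integral of $G$ over the curve built from $\Gamma_1$ and $\Gamma_2$, the same observation that the orbit piece $\Gamma_2$ contributes nothing, conservativity of $G$ for (S$_0$), and the winding-number formula of Remark \ref{rem:windingnumber} for (S$_2$). The only genuine divergence is the treatment of (S$_1$). The paper closes the regularized curve with an arc $\Gamma_3$ of the level curve of $V$ through $(0,\pm\varepsilon)$ placed so that the origin is \emph{not} enclosed; the loop integral is then zero by conservativity, and since $V$ is constant along $\Gamma_3$ its contribution is read off directly from Remark \ref{rem:windingnumber} as the swept angle $\pi$ times $T/(D\sqrt{4D-T^2})$, which gives the sum of the two segment integrals equal to $d_1$ before letting $\varepsilon\searrow 0$. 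You instead detour along a Euclidean semicircle in $\{x>0\}$, so the origin \emph{is} enclosed and the loop equals $d_2$, and you evaluate the semicircular contribution as $d_2/2=d_1$ via the $\varepsilon$-independence and $\pi$-periodicity of the angular integrand (valid because, with $a=0$, both $L$ and $V$ are homogeneous of degrees one and two at the origin); the bookkeeping $d_2=0+\operatorname{PV}(\cdot)+d_1$ then gives the same principal value $d_1$. Both regularizations are sound: the paper's choice buys a direct reuse of Remark \ref{rem:windingnumber} with no new computation, while yours avoids introducing level-curve arcs at the cost of the short (and correct) symmetry argument for the semicircle.
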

\begin{proof}
The proof of this theorem is direct from the computation of the line integral of vector field
\eqref{eq:campoortogonal} along the Jordan curve $\Gamma$ given in Eq.~\eqref{eq:Jordancurve} and positively oriented.
This computation depends on the relative position of $\Gamma$ and the equilibrium of system
\eqref{eq:lienard}, if any. Therefore, the proof is divided into three parts.

Before that, since the vector field $G$ is orthogonal to the flow of system \eqref{eq:lienard}
at $\mathbb{R}^2\setminus V^{-1}(\{0\})$ then 
\begin{equation}
\label{eq:gamma2=0}
\int_{\Gamma_2} G \cdot d\mathbf{r}=0
\end{equation}
due to $\Gamma_2$ is a piece of an orbit of the system. 

Let us begin with the easiest case, that is, scenario (S$_0$) (see Fig.~\ref{fig:implicaderecha}(a)). 
From Proposition \ref{proposition:zeroset}, it is obvious that $V$ does not vanish in $\operatorname{Int}(\Gamma)\cap\Gamma$.
Thus, since the vector field $G$ is conservative, the integral
$$
   \oint_{\Gamma} G \cdot d\mathbf{r}=\int_{\Gamma_1} G \cdot d\mathbf{r}+\int_{\Gamma_2} G \cdot d\mathbf{r}
$$
vanishes. Besides that, identity \eqref{eq:gamma2=0} implies that 
$$
0=\oint_{\Gamma} G \cdot d\mathbf{r}=\int_{\Gamma_1} G \cdot d\mathbf{r}=\int_{y_1}^{y_0} \frac{-y}{V(0,y)} dy.
$$
The proof for scenario (S$_0$) is finished.

Regarding scenario (S$_2$) (see Fig.~\ref{fig:implicaderecha}(b)), the unique equilibrium point belongs to the interior of the Jordan curve $\Gamma$ and it is the only point at which the inverse integrating factor $V$ vanishes. Therefore, by using identity \eqref{eq:gamma2=0} and 
Remark \ref{rem:windingnumber} 
it is trivial to see that
$$
\int_{y_1}^{y_0} \frac{-y}{V(0,y)} dy=\int_{\Gamma_1} G \cdot d\mathbf{r}=\oint_{\Gamma} G \cdot d\mathbf{r}=
\frac{2 \pi T}{D\sqrt{4D-T^2}}.
$$

The last part of the proof, corresponding to scenario (S$_1$), where $a=0$ and $D\neq0$, is a little bit more complicated. In fact, it is the reason that motivates us to use the Cauchy principal value as defined in Eq.~\eqref{eq:PV} because the improper integral
$$
\int_{y_1}^{y_0}  \frac{-y}{V(0,y)} dy=\int_{y_1}^{y_0} \frac{-1}{D y}  dy
$$
is divergent.

Since the unique equilibrium of the system (the origin) is located at the Jordan curve $\Gamma$, 
it is not possible to proceed as in previous scenarios. Under these circumstances,  it is usual to choose a Jordan curve
$
\widetilde{\Gamma}=\Gamma_{11}\cup\Gamma_2\cup\Gamma_{12}\cup\Gamma_3
$
as shown in Fig.~\ref{fig:implicaderecha}(c). Note that $\Gamma_3$ intersects the Poincar\'e section $\{x=0\}$ at the
symmetric points $(0,-\varepsilon)$ and $(0,\varepsilon)$, with $\varepsilon>0$. Once more, by identity \eqref{eq:gamma2=0} and 
Remark \ref{rem:windingnumber}, 
it follows that
$$
   \int_{y_1}^{-\varepsilon}  \frac{-1}{D y} dy+\int_{\varepsilon}^{y_0}  \frac{-1}{D y} dy=
   -\int_{\Gamma_3} G \cdot d\mathbf{r}=\frac{\pi T}{D\sqrt{4D-T^2}}.
$$
Now, to finish the proof, it is enough to take limits as $\varepsilon\searrow0$. 
\end{proof}

\begin{figure}[!h]
    \begin{center}
    \begin{tabular}{c@{}c@{}c}
     \includegraphics[width=0.33\linewidth]{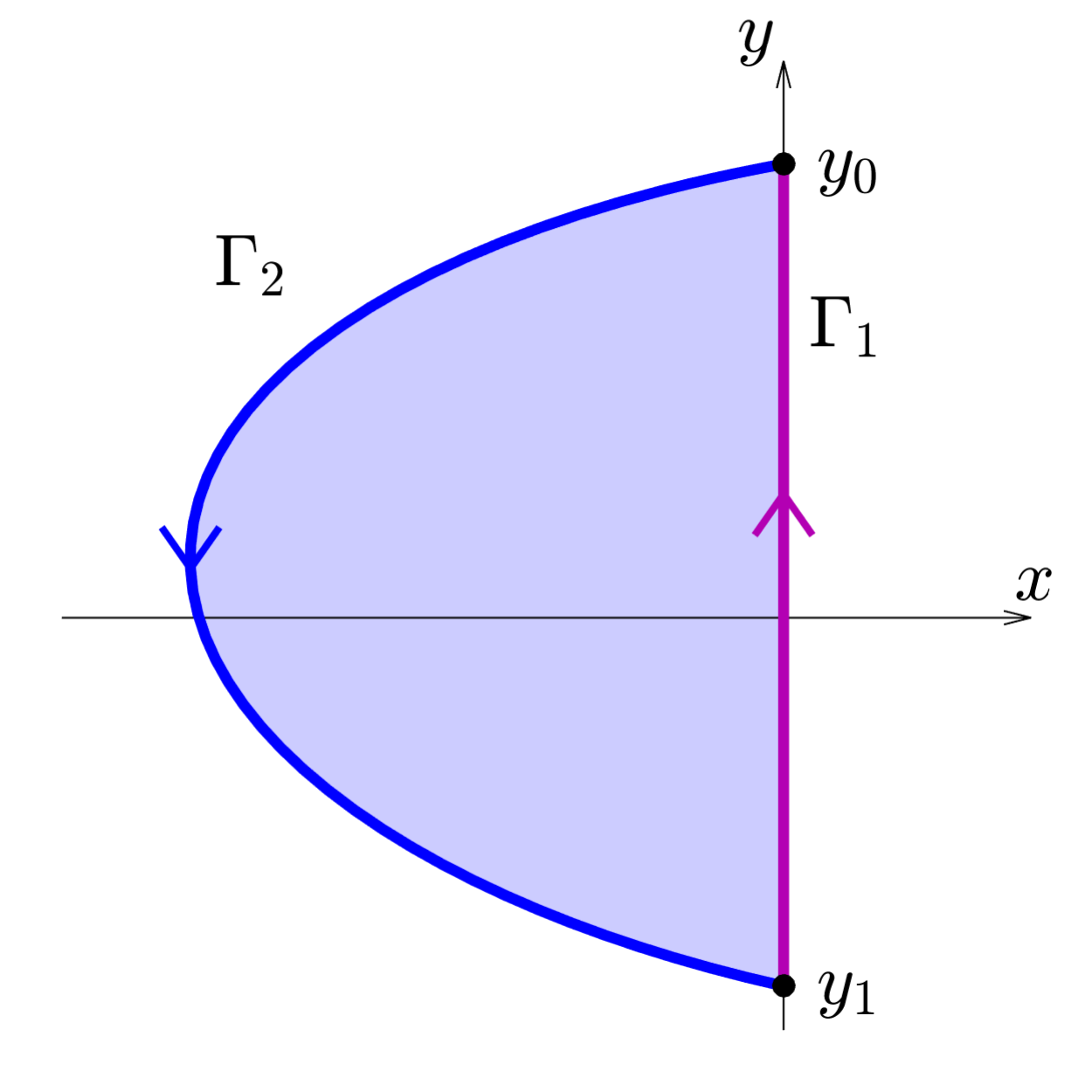}&
     \includegraphics[width=0.33\linewidth]{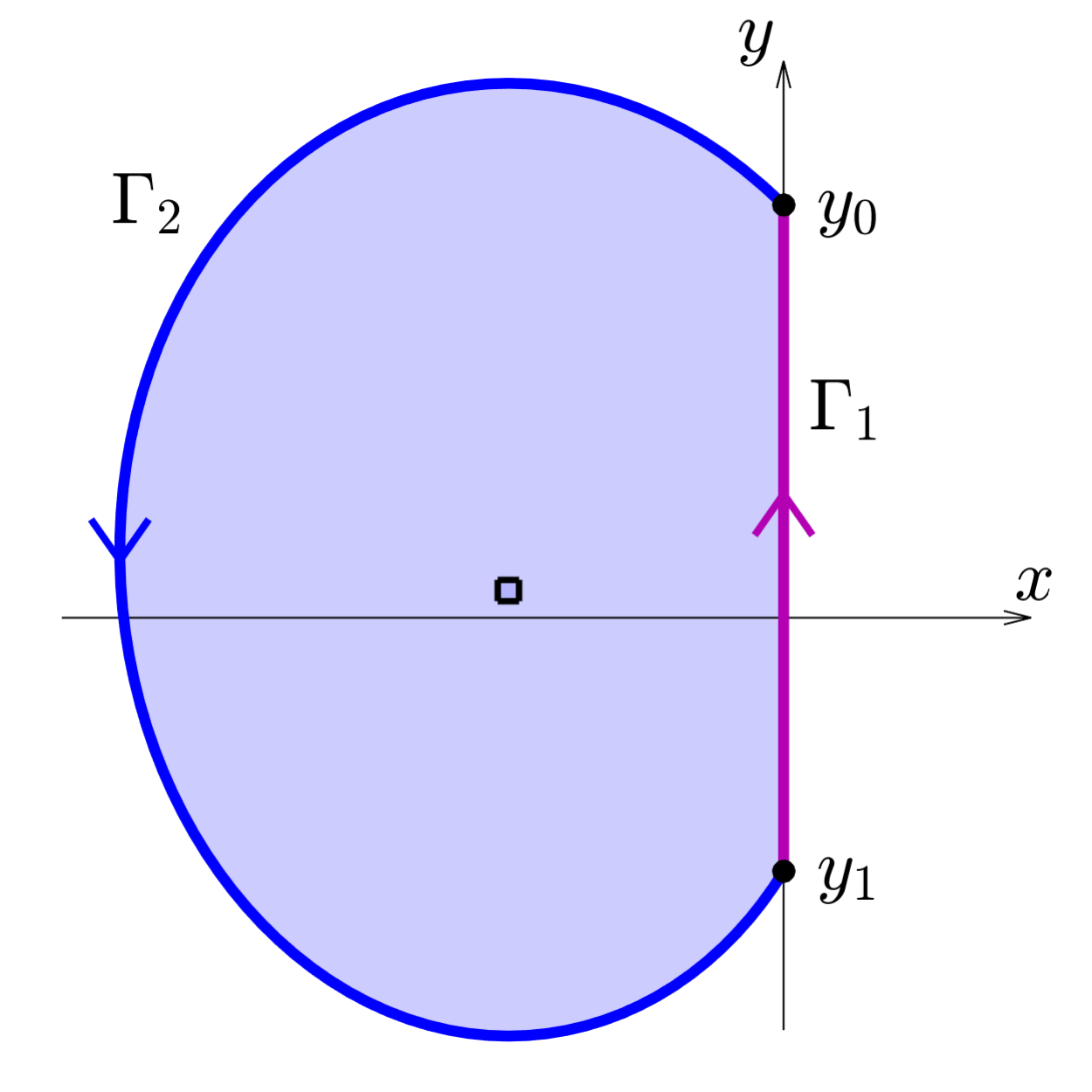}&
     \includegraphics[width=0.33\linewidth]{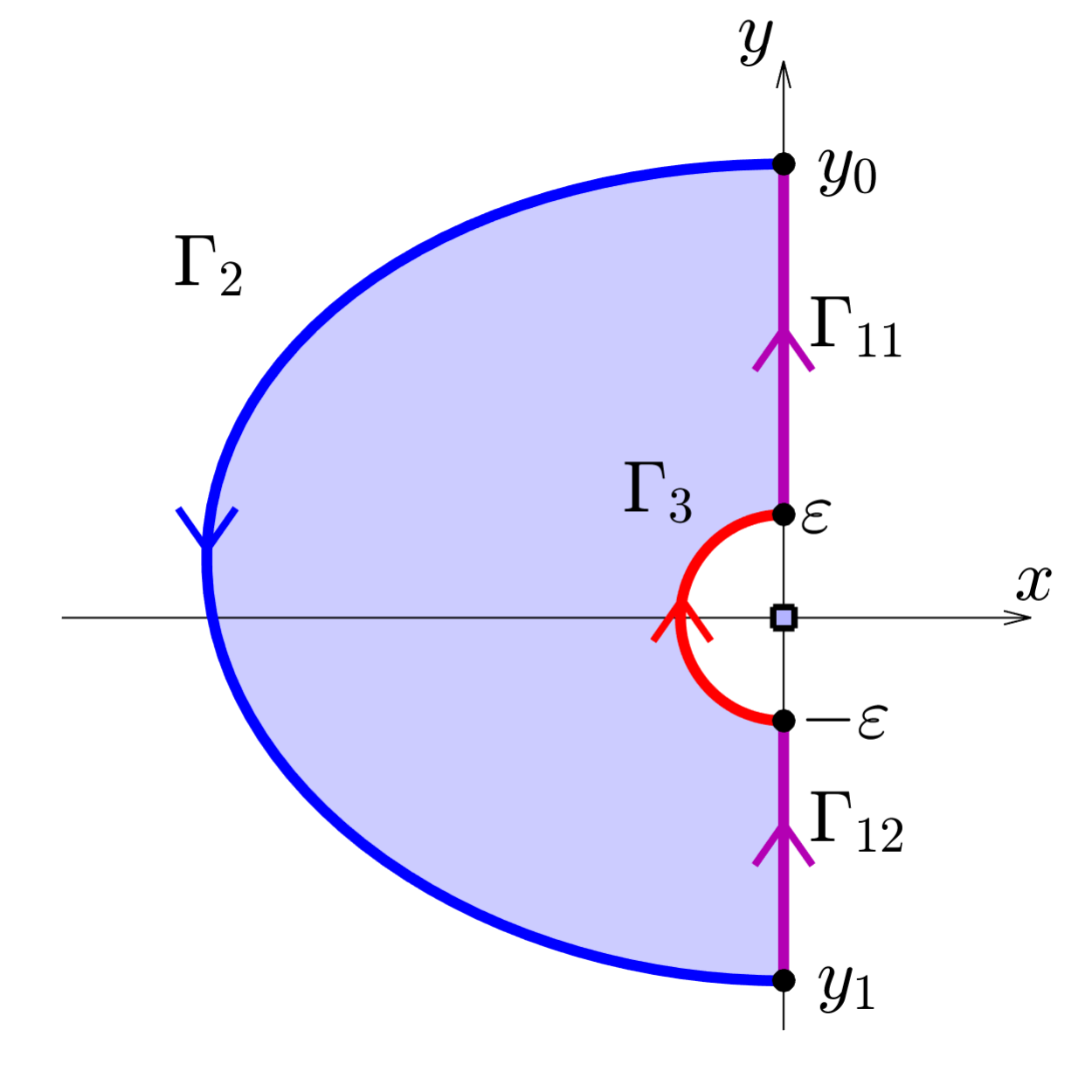}\\
     {\footnotesize (a)} & {\footnotesize (b)}& {\footnotesize (c)}
    \end{tabular}
    \end{center}
     \caption{Curves used in the proof of Theorem \ref{th:implicaderecha}. Labels (a), (b), (c) stand respectively for scenarios (S$_0$), (S$_2$) and (S$_1$). }\label{fig:implicaderecha}
\end{figure}

\begin{remark} 
{ Analogously, if $y_0$ is the image of $y_1$ by the right Poincar\'e half-map, the same expression given in equation 
\eqref{eq:curvasnivel} is obtained but now $d_k$ must be changed by $-d_k$.}
\end{remark}

{ Note that, at this point, 
the flight time has been removed from the expression for the Poincar\'e half-maps and the analysis of the many cases that appear due to the configuration of the spectra of matrix $A$ have been reduced to the study of the level curves of the single function,
\begin{equation}\label{eq:F}
 \mathcal{F}(y_1,y_0)=\operatorname{PV}\int_{y_1}^{y_0} \frac{-y}{V(0,y)} dy,
\end{equation}
where $V$  is the inverse integrating factor given in expression \eqref{eq:fii}.
}

{
Let be $\Delta_2$ a piece of an orbit of system \eqref{eq:lienard} that starts at $(0,y_0)$
and ends at $(0,y_1)$,
$\Delta_1$ the segment bounded by $y_0$ and $y_1$ in section $\Sigma$, and  $\Delta=\Delta_1\cup\Delta_2$ the corresponding positively oriented closed curve. Reasoning as in the proof of Theorem \eqref{th:implicaderecha}, it is trivial that
$$
   \mathcal{F}(y_1,y_0)=\oint_{{\Delta}} G \cdot d\mathbf{r} 
$$
holds. A suitable name for function $\mathcal{F}$ comes from Remark \ref{rem:windingnumber}.
}

\begin{definition}
\label{def:index-like function}
 Function $\mathcal{F}$ given in equation \eqref{eq:F} is called the index-like function.
\end{definition}

\begin{remark}
A logical consequence of the direct procedure (that is, not depending on the spectra of matrix $A$) of construction of an alternative way of writing the left Poincar\'e half-map is that the index-like function is common for all cases and it only depends (through the expression of $V(0,y)$) on the parameters of the linear system \eqref{eq:lienard} and not on the eigenvalues of matrix $A$. Since the qualitative information of such a linear system is given by its characteristic polynomial $\mathcal{P}_A$ and the location of the equilibrium, if it exists, it seems to be clear that there must exist a relationship between $V(0,y)$ and the characteristic polynomial. In fact, it is trivial to prove the equality 
$$
y^2 \mathcal{P}_A\left(\frac{a}{y}\right)=V(0,y)
$$
for $y\neq0$. Thus, the index-like function given in Definition \ref{def:index-like function} (and, consequently, the Poincar\'e half-maps) can be written in terms of the characteristic polynomial  
$\mathcal{P}_A$.
\end{remark}

The next step of this work will consist on the study of the index-like function $\mathcal{F}$, with a focus on the properties that can be extrapolated to Poincar\'e half-maps.

\section{Analysis of the index-like function}\label{sec:index-like}

The first important task in the analysis of the index-like function $\mathcal{F}$ given in \eqref{eq:F} is to delimit its domain of definition to avoid nontrivial zeros of the function $V(0,y)$. Although the integral in the definition of the function $\mathcal{F}$ could have been extended, via the Cauchy principal value, to be valid for intervals containing those zeros, in the context of this work is not necessary to do it because the Poincar\'e half-maps of system \eqref{eq:lienard} cannot be defined in them.

In order to delimit the domain of function $\mathcal{F}$, 
let us consider the open interval 
\begin{equation}
\label{eq:I}
\mathcal{I}=\Big\{y\in\mathbb{R}: \ (0,y)\in \operatorname{int}\left(\mathcal{C}\cup\left\{(0,0)\right\}\right)\Big\},
\end{equation}
where $ \operatorname{int}(\cdot)$ stands for the topological concept of the interior of a set and the set
$\mathcal{C}$ is the unique maximal connected component of $\mathbb{R}^2\setminus V^{-1}(\{0\})$ that contains
two points $(0,\xi_1)$ and $(0,\xi_2)$ with $\xi_1 \cdot \xi_2 <0$. 
On the one hand, note that for $a\neq0$ interval $\mathcal{I}$ could have been equivalently defined as $\mathcal{I}=\{y\in\mathbb{R}: \ (0,y)\in \mathcal{C}\}$ because $(0,0)\notin V^{-1}(\{0\})$.
On the other hand, when $a=0$, that is, when the origin $(0,0)\in V^{-1}(\{0\})$ or equivalently the only equilibrium point is the origin (remember that, from \eqref{eq:cond_aD}, it is $a^2+D^2\neq0$), the set $\mathcal{C}$ and so the interval $\mathcal{I}$ are $\mathcal{C}=\mathbb{R}^2\setminus\{(0,0)\}$ and $\mathcal{I}=\mathbb{R}$ when the equilibrium is a focus or a center but they are empty when the equilibrium it is a saddle or a node. At this moment, it should have to be obvious that the intricate definition of the interval $\mathcal{I}$ is suitable with the intention of removing the cases where the existence of a Poincar\'e half-map is not allowed.  

Moreover, it is clear that the inverse integrating factor 
$V$ is strictly positive on the set $\mathcal{C}$ when $\mathcal{C}\neq\emptyset$. Therefore, the inequality $V(0,y)>0$ holds for all $y\in\mathcal{I}$ when $a\neq0$ and for
all $y\in\mathcal{I}\setminus\{0\}$ when $a=0$.

Finally, when $\mathcal{I}$ is not empty, it can be written as $\mathcal{I}=(\mu_1,\mu_2)$ where $-\infty\leqslant\mu_1<0<\mu_2\leqslant+\infty$. Besides that, if $-\infty<\mu_1$ (resp. $\mu_2<+\infty$) then $V(0,\mu_1)=0$ (resp. $V(0,\mu_2)=0$).

Let us consider now the integrating function from the definition of the function $\mathcal{F}$ given in \eqref{eq:F},
\begin{equation}
\label{eq:integrando}
h(y)=\frac{-y}{V(0,y)}=\frac{-y}{D y^2-a T y+a^2} \ ,
\end{equation}
defined for every $y\in\mathcal{I}$ when $a\neq0$ and for every
$y\in\mathcal{I}\setminus{\{0\}}$ when $a=0$.  This function $h$ is strictly positive for $y<0$ and strictly negative for $y>0$.

Since for $i=1,2$ the endpoints $\mu_i$ of interval $\mathcal{I}$ are infinity or satisfy $V(0,\mu_i)=0$, any integral of $h$ involving any of these points is an improper integral and it is simple to see that it is, moreover, divergent. In fact, for every $a\neq0$ and $z\in\mathcal{I}$, the following equalities hold
\begin{equation}
\label{eq:divergenciaextremos}
\int_z^{\mu_2} h(y) dy=-\infty \quad \mbox{and}\quad \int_{\mu_1}^z h(y) dy=+\infty.
\end{equation}
Also, for $a=0$, it is $h(y)=\frac{-1}{Dy}$ and since $D$ cannot vanish any integral involving $y=0$ is also a divergent improper integral.

In this first step, the sign of $V$ has allowed to obtain an open interval $\mathcal{I}$ outside of which 
a Poincar\'e half-map could not be defined. Therefore, from now on, the study of function $\mathcal{F}$ will be restricted to 
the open square $(y_1,y_0)\in\mathcal{I}^2=\mathcal{I}\times\mathcal{I}$. The next result describes the region of analyticity of function  $\mathcal{F}$ in this square.

\begin{lemma}
\label{lemma:analitica}
Let us assume that interval $\mathcal{I}$ given in \eqref{eq:I} is not empty. Function $\mathcal{F}$ given in \eqref{eq:F} is analytic in:
\begin{enumerate}
\item \label{lemma:analitica1} the open square $\mathcal{I}^2$ for $a\neq0$;
\item 
the open set $\mathcal{I}^2\setminus\left\{(y_1,y_0)\in\mathbb{R}^2 : y_1 \cdot y_0=0\right\}$ for $a=0$.
\end{enumerate}
\end{lemma}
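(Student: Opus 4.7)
The proof plan is to reduce both parts of the lemma to the Fundamental Theorem of Calculus by exhibiting an analytic antiderivative of the integrand $h(y)=-y/V(0,y)$ on the relevant domain. Once I can write $\mathcal{F}(y_1,y_0)=H(y_0)-H(y_1)$ for an analytic $H$, the analyticity of $\mathcal{F}$ as a function of two variables is automatic.

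For part \ref{lemma:analitica1} ($a\neq 0$), I would first argue that $V(0,y)>0$ for every $y\in\mathcal{I}$. Indeed, since $a\neq 0$ gives $V(0,0)=a^{2}>0$, the point $(0,0)$ lies in $\mathcal{C}$, and by the remark made just before the lemma, $V$ has constant (positive) sign on the connected component $\mathcal{C}$ of $\mathbb{R}^{2}\setminus V^{-1}(\{0\})$. Hence the integrand $h$ is real-analytic on the open interval $\mathcal{I}$, no principal value is required, and any antiderivative $H$ of the rational analytic function $h$ is analytic on $\mathcal{I}$. The equality $\mathcal{F}(y_1,y_0)=H(y_0)-H(y_1)$ delivers analyticity on $\mathcal{I}^{2}$.

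For part 2 ($a=0$), condition \eqref{eq:cond_aD} forces $D\neq 0$, so $V(0,y)=Dy^{2}$ and $h(y)=-1/(Dy)$, which is real-analytic on $\mathbb{R}\setminus\{0\}$. The open set $\mathcal{I}^{2}\setminus\{y_1y_0=0\}$ splits into four quadrants, and I would treat two cases. When $y_0$ and $y_1$ have the same sign, the interval of integration avoids $0$, no PV is needed, and $\mathcal{F}(y_1,y_0)=H(y_0)-H(y_1)$ with $H(y)=-\tfrac{1}{D}\log|y|$, which is analytic on each of the two same-sign quadrants. When $y_1<0<y_0$ (the case $y_0<0<y_1$ follows by the sign convention for PV), the definition \eqref{eq:PV} gives
\[
\mathcal{F}(y_1,y_0)=\lim_{\varepsilon\searrow 0}\left[-\tfrac{1}{D}\log\varepsilon+\tfrac{1}{D}\log(-y_1)-\tfrac{1}{D}\log y_0+\tfrac{1}{D}\log\varepsilon\right]=-\tfrac{1}{D}\log\frac{y_0}{-y_1},
\]
which is an analytic function of $(y_1,y_0)$ on the open quadrant $\{y_1<0<y_0\}$.

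The only real subtlety is the opposite-sign case in part 2: one must check that the symmetric choice of cutoffs $\pm\varepsilon$ in the definition of PV makes the divergent logarithmic terms cancel exactly, leaving a well-defined analytic remainder, and that the resulting expression cannot be extended analytically across $\{y_1y_0=0\}$, which is why the singular set must be excluded from the domain of analyticity. Everything else is a straightforward application of the Fundamental Theorem of Calculus to an analytic integrand.
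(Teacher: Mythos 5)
Your proposal is correct and follows essentially the same route as the paper: for $a\neq0$ the positivity of $V(0,\cdot)$ on $\mathcal{I}$ makes the integrand analytic so that $\mathcal{F}(y_1,y_0)=H(y_0)-H(y_1)$ with $H$ analytic, and for $a=0$ one computes the principal value explicitly, the paper writing it compactly as $\mathcal{F}(y_1,y_0)=\frac{1}{D}\log\left|\frac{y_1}{y_0}\right|$, which is exactly the closed form your quadrant-by-quadrant computation produces. Your extra case-splitting and the remark about non-extendability across $\{y_1y_0=0\}$ are harmless elaborations of the same argument.
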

\begin{proof}
For $a\neq0$, function $h(y)$ given in \eqref{eq:integrando} is analytic in $\mathcal{I}$. Therefore, statement \ref{lemma:analitica1} is direct.

Trivially, for $a=0$, 
\begin{equation}
\label{eq:fparaa=0}
\mathcal{F}(y_1,y_0)=\operatorname{PV}\int_{y_1}^{y_0} \frac{-1}{D y} \,dy=\frac1D \log\left|\frac{y_1}{y_0}\right|
\end{equation}
and the proof is finished.
\end{proof}

\begin{remark} 
\label{remark:multivaluada}
It is obvious that for $a=0$, function $\mathcal{F}$ is not defined at set $\left\{(y_1,y_0)\in\mathbb{R}^2 : y_1 \cdot y_0=0\right\}$. 
Nevertheless, although
$$
  \mathop{\lim_{y_1\to 0}}_{y_0\neq0} \mathcal{F}(y_1,y_0)=-\operatorname{sign}(D) \cdot \infty \quad \mbox{and} \quad
  \mathop{\lim_{y_0\to 0}}_{y_1\neq0} \mathcal{F}(y_1,y_0)=+\operatorname{sign}(D) \cdot \infty,
$$
the directional limit along the straight line $y_1=c y_0$, for $c\neq0$, is
$$
  \lim_{y_0\to 0} \mathcal{F}(c y_0,y_0)=\frac{\log|c|}{D}
$$
so, roughly speaking, we could say that the expression $\mathcal{F}(0,0)$ takes any value at this point.
\end{remark}

Remember that the Poincar\'e half-maps correspond to specific level curves of function $\mathcal{F}$. Hence, the next result is devoted to describing the whole set of level curves $\mathcal{F}(y_1,y_0)=q\in\mathbb{R}$ restricted to $\mathcal{I}^2$. In fact, we will see that these curves can be seen as graphs of real analytic functions defined in $\mathcal{I}$  that, moreover, are solutions of the same differential equation. 

\begin{theorem}
\label{th:curvasnivel}
Let us consider the index-like function $\mathcal{F}$ given in equation \eqref{eq:F} and the inverse integrating factor $V$
given in \eqref{eq:fii}.
Let us assume that condition \eqref{eq:cond_aD} holds and that the open interval $\mathcal{I}=(\mu_1,\mu_2)$ defined in \eqref{eq:I} is not empty.  For every value $q\in\mathbb{R}$ there exist two different real analytic functions $\phi_q,\varphi_q:\mathcal{I}\longrightarrow\mathcal{I}$ such that the level curve $\mathcal{C}_q=\left\{(y_1,y_0)\in\mathcal{I}^2: \ \mathcal{F}(y_1,y_0)=q\right\}$ is the union of the graphs of functions $\phi_q$ and $\varphi_q$. Moreover, both functions are solutions of the differential equation
\begin{equation}
\label{eq:ode}
y_1 V(0,y_0) \, dy_1 -y_0 V(0,y_1)\, dy_0=0
\end{equation}
in $\mathcal{I}$.
To be more precise:
\begin{enumerate}
\item \label{th:it:a=0} For $a=0$ then $\mathcal{C}_q=\left\{(y_1,y_0)\in\mathcal{I}^2: \ (y_1-\phi_q(y_0)) (y_1-\varphi_q(y_0))=0 \right\}$, where
$\phi_q(z)=e^{Dq} \, z$ and $\varphi_q(z)=-e^{Dq} \, z$ for every $z\in\mathcal{I}$.
\item \label{th:it:aneq0.q=0} For $a\neq0$ and $q=0$ then 
$$\mathcal{C}_0=\left\{(y_1,y_0)\in\mathcal{I}^2: \ (y_1-\phi_0(y_0)) (y_1-\varphi_0(y_0))=0 \right\}$$
	where $\phi_0=\operatorname{id}$, $\varphi_0$ is an involution in $\mathcal{I}$ and $\varphi_0(0)=0$. 
	
\item \label{th:it:aneq0.qneq0} For $a\neq0$ and $q\neq 0$ then 
$$\mathcal{C}_q=\left\{(y_1,y_0)\in\mathcal{I}^2: \ (y_1-\phi_q(y_0)) (y_1-\varphi_q(y_0))=0 \right\} \ \mbox{for} \ q>0$$ and 
	$$\mathcal{C}_q=\left\{(y_1,y_0)\in\mathcal{I}^2: \ (y_0-\phi_q(y_1)) (y_0-\varphi_q(y_1))=0 \right\} \ \mbox{for} \ q<0,$$ where
	 every function $\varphi_q$ is unimodal in $\mathcal{I}$, function $\operatorname{sgn}(q) \varphi_q$ 
	has a strictly negative maximum at the origin and the equivalence $\phi_q\equiv\varphi_{-q}$ holds.
	Furthermore, the restricted functions	
\begin{equation}
\label{eq:restrictedfunctions}
	\begin{array}{l}	
        \varphi_q:[0,\mu_2) \longrightarrow (\mu_1,\varphi_q(0)], \text{ for } q>0,\\
        \varphi_q:(\mu_1,0] \longrightarrow [\varphi_q(0),\mu_2), \text{ for } q<0,
	\end{array}
\end{equation}
	are bijective.
\end{enumerate}
\end{theorem}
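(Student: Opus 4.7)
The strategy is to exploit the fact that the integrand in \eqref{eq:F} depends on only one variable, so $\mathcal{F}$ is the difference of a univariate primitive evaluated at the two arguments; inverting that primitive on each of its monotone half-branches then reduces the bivariate level-set problem to elementary one-variable computations.

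\textbf{ODE and the case $a=0$.} Setting $h(y)=-y/V(0,y)$, we have $\partial_{y_0}\mathcal{F}=h(y_0)$ and $\partial_{y_1}\mathcal{F}=-h(y_1)$, so $d\mathcal{F}=0$ along $\mathcal{C}_q$ reads $h(y_0)\,dy_0-h(y_1)\,dy_1=0$; multiplying through by $-V(0,y_0)V(0,y_1)>0$ on $\mathcal{I}^2$ yields \eqref{eq:ode}. For $a=0$, the closed form \eqref{eq:fparaa=0} converts $\mathcal{F}=q$ into $|y_1/y_0|=e^{Dq}$, giving at once the two affine branches of item \ref{th:it:a=0} on $\mathcal{I}=\mathbb{R}$.

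\textbf{The antiderivative $H$ for $a\neq 0$.} Since $V(0,0)=a^2\neq0$, $h$ is analytic on $\mathcal{I}$, so $H(y):=\int_0^y h(s)\,ds$ is well defined and $\mathcal{F}(y_1,y_0)=H(y_0)-H(y_1)$. The sign rule (with $h>0$ on $(\mu_1,0)$ and $h<0$ on $(0,\mu_2)$) together with \eqref{eq:divergenciaextremos} shows that $H$ has a unique global maximum $H(0)=0$, is strictly monotone on each half-branch, and tends to $-\infty$ at the endpoints $\mu_i$. Hence the restrictions $H_+:=H|_{[0,\mu_2)}$ and $H_-:=H|_{(\mu_1,0]}$ are analytic bijections onto $(-\infty,0]$ with analytic inverses on $(-\infty,0)$, by the analytic inverse function theorem. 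For $q>0$, $H(y_0)-q<0$ lies in the range of both $H_\pm^{-1}$ for every $y_0\in\mathcal{I}$, producing the two analytic branches $\phi_q:=H_+^{-1}\circ(H-q)$ (positive-valued) and $\varphi_q:=H_-^{-1}\circ(H-q)$ (negative-valued) whose graphs exhaust $\mathcal{C}_q$. Unimodality, the strict negativity $\varphi_q(0)=H_-^{-1}(-q)<0$, and the bijectivity statements in \eqref{eq:restrictedfunctions} for $q>0$ all follow at once from the strict monotonicity of $H$ and of $H_-^{-1}$, together with the endpoint behaviour of $H$. The case $q<0$ is obtained from the antisymmetry $\mathcal{F}(y_1,y_0)=-\mathcal{F}(y_0,y_1)$, which swaps arguments and reduces to the $-q>0$ case already treated; this simultaneously furnishes the parametrization by $y_1$ and the stated equivalence $\phi_q\equiv\varphi_{-q}$.

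\textbf{The case $q=0$ and main obstacle.} The trivial branch $\phi_0=\mathrm{id}$ is immediate. Off the origin the other branch is $\varphi_0=H_\mp^{-1}\circ H_\pm$, which is analytic on each half of $\mathcal{I}$, and the involution property is built into the symmetric equation $H(y_1)=H(y_0)$. The delicate point---and in my view the main obstacle---is to show that $\varphi_0$ extends \emph{analytically} through $y=0$, where $H$ has a nondegenerate critical point since $H'(0)=0$ but $H''(0)=-1/a^2<0$. I plan to handle this via the one-variable analytic Morse lemma: there exists an analytic $u$ with $u(0)=0$ and $u'(0)=1$ such that $H(y)=-u(y)^2/(2a^2)$ on a neighbourhood of $0$; then locally $\varphi_0=u^{-1}\circ(-u)$ is analytic with $\varphi_0(0)=0$ and $\varphi_0'(0)=-1$, which patches the two half-interval formulas into a single analytic function on $\mathcal{I}$. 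The fact that $\phi_q$ and $\varphi_q$ solve \eqref{eq:ode} is then a direct consequence of Step~1, because by construction their graphs are level sets of $\mathcal{F}$.
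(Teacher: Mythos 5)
Your proposal is correct, and its overall skeleton (the ODE from $d\mathcal{F}=0$, the closed form for $a=0$, monotonicity plus the divergences \eqref{eq:divergenciaextremos} to produce exactly two branches, antisymmetry $\mathcal{F}(y_1,y_0)=-\mathcal{F}(y_0,y_1)$ for $q<0$ and for $\phi_q\equiv\varphi_{-q}$) matches the paper's proof; your reformulation through the primitive $H$ with $\mathcal{F}(y_1,y_0)=H(y_0)-H(y_1)$ is just an explicit repackaging of the paper's monotonicity argument for $\mathcal{F}(\,\cdot\,,y_0)$. Where you genuinely diverge is at the crux you correctly identified, the analytic extension of $\varphi_0$ through the origin: the paper factors the bivariate function as $\mathcal{F}(y_1,y_0)=(y_1-y_0)\,\mathcal{F}^*(y_1,y_0)$, checks via the symmetry of the partial derivatives that $\mathcal{F}^*$ is analytic at the origin with $\partial\mathcal{F}^*/\partial y_1(0,0)=\partial\mathcal{F}^*/\partial y_0(0,0)=a^{-2}\neq0$, and applies the analytic implicit function theorem to $\mathcal{F}^*$; you instead use the one-variable analytic Morse normal form $H(y)=-u(y)^2/(2a^2)$, $u(0)=0$, $u'(0)=1$, and write $\varphi_0=u^{-1}\circ(-u)$ locally. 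Both are valid; your route is more self-contained at that step and yields $\varphi_0(0)=0$, $\varphi_0'(0)=-1$ immediately, while the paper's factorization stays with the single function $\mathcal{F}$ and reuses the same implicit-function machinery uniformly for the $q\neq0$ branches, where it also reads off the sign of $d\varphi_q/dy_0$ by implicit differentiation (you get unimodality instead from compositions of monotone maps — equivalent content). Two harmless slips worth fixing: the factor you multiply by is $V(0,y_0)V(0,y_1)>0$, not $-V(0,y_0)V(0,y_1)>0$ (irrelevant for an equation set to zero), and for $a=0$ the graphs pass through the origin, where $\mathcal{F}$ is undefined, so the verification that the lines $y_1=\pm e^{Dq}y_0$ satisfy \eqref{eq:ode} there should be done directly (it is trivial, as the paper notes).
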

\begin{proof}
For $a=0$ the proof of statement \ref{th:it:a=0} is direct from expression \eqref{eq:fparaa=0} and Remark \ref{remark:multivaluada}. Moreover, since $D\neq0$, differential equation \eqref{eq:ode} is just $y_1  y_0 ( y_0 \,dy_1-y_1 \, dy_0)=0$, which is trivially satisfied by straight lines that pass through the origin.

In the rest of the proof, we assume $a\neq0$
and, as it has been said before, the principal value can be removed from the definition of the function $\mathcal{F}$. Therefore, the partial derivatives
of the function $\mathcal{F}$ are
\begin{equation}
\label{eq:derivadasF}
   \frac{\partial \mathcal{F}}{\partial y_1}(y_1,y_0)=\frac{y_1}{V(0,y_1)} \quad \mbox{and} \quad  \frac{\partial \mathcal{F}}{\partial y_0}(y_1,y_0)=\frac{-y_0}{V(0,y_0)}
\end{equation}
and so, for every $q\in\mathbb{R}$, any function implicitly defined in $\mathcal{I}$ by the equation $\mathcal{F}(y_0,y_1)=q$, if it exists, satisfies
$$
  0=d \mathcal{F}=\frac{y_1}{V(0,y_1)} \, dy_1-\frac{y_0}{V(0,y_0)} \, dy_0,
$$
what is equivalent to differential equation \eqref{eq:ode} since $V(0,z)>0$ for every $z\in\mathcal{I}=(\mu_1,\mu_2)$.

Let us prove now the existence of functions $\phi_q$ and $\varphi_q$ mentioned in the statement of the theorem. From the values of the partial derivatives of $\mathcal{F}$ it is clear that for every $y_0\in\mathcal{I}$, the function
$\mathcal{F}(\,\cdot\,,y_0)$ is strictly decreasing at interval $(\mu_1,0)$ and strictly increasing at interval $(0,\mu_2)$. From the equalities given in \eqref{eq:divergenciaextremos} it is clear that for every $y_0\in\mathcal{I}$, 
$$
\lim_{y_1\searrow\mu_1} \mathcal{F}(y_1,y_0)=\lim_{y_1\nearrow\mu_2} \mathcal{F}(y_1,y_0)=+\infty.
$$
Moreover, the inequality $\mathcal{F}(0,y_0)  <  
0$ holds for every $y_0\neq0$ and $\mathcal{F}(0,0)=0$.

Let us consider now $q=0$. For every $y_0\in\mathcal{I}\setminus\{0\}$ there exist two unique and different values 
$\phi_0(y_0),\varphi_0(y_0)\in\mathcal{I}$ such that $\mathcal{F}(\phi_0(y_0),y_0)=\mathcal{F}(\varphi_0(y_0),y_0)=0$. Moreover, the inequality $\phi_0(y_0)\cdot\varphi_0(y_0)<0$ is true (without loss of generality, we can assume $y_0\cdot\phi_0(y_0)>0$). Notice that if $\phi_0$ and $\varphi_0$ 
could be extended to $y_0=0$, both functions should satisfy $\phi_0(0)=\varphi_0(0)=0$ because the only solution to $\mathcal{F}(y_1,0)=0$ is $y_1=0$. 

On the one hand, in view of the trivial equality $\mathcal{F}(y_0,y_0)=0$ for every $y_0\in\mathcal{I}$, 
function
$$
\phi_0:y_0\in\mathcal{I}\longrightarrow\phi_0(y_0)\in\mathcal{I}
$$
must be the identity function and there exist a function $\mathcal{F}^*$ defined in $\mathcal{I}^2$ such that
$\mathcal{F}(y_1,y_0)= (y_1-y_0) \mathcal{F}^* (y_1,y_0)$ and $\mathcal{F}^*(\varphi_0(y_0),y_0)=0$ for every $y_0\in\mathcal{I}\setminus\{0\}$.

The function $\phi_0$ is clearly analytic because it is the identity function. From \eqref{eq:derivadasF}, it is immediate that $\partial \mathcal{F}/\partial y_1(\varphi_0(y_0),y_0)\neq0$ for every $y_0\neq0$ and so the function $\varphi_0$ is analytic for every $y_0\neq0$ as a direct consequence of the implicit function theorem for real analytic functions (see Lemma \ref{lemma:analitica}). 

Now it will be proved that $\varphi_0$ can be analytically extended to $y_0=0$. 
From the expressions of the derivatives given in \eqref{eq:derivadasF}, it is clear that $\partial \mathcal{F}/\partial y_1(0,0)=\partial \mathcal{F}/\partial y_0(0,0)=0$, all the mixed partial derivatives of $\mathcal{F}$ vanish and the equality
$$
\frac{\partial^n \mathcal{F}}{\partial y_1^n}(y_1,y_0)=-\frac{\partial^n \mathcal{F}}{\partial y_0^n}(y_0,y_1)
$$
is satisfied for every positive integer $n$.
Thus, the function $\mathcal{F}^*$ is analytic in a neighborhood of the origin, $\mathcal{F}^*(0,0)=0$, $\partial \mathcal{F}^*/\partial y_1(0,0)=\partial \mathcal{F}^*/\partial y_0(0,0)=a^{-2}\neq0$. 
As a direct consequence of the implicit function theorem applied to $\mathcal{F}^*$ at the origin, it is obtained that $\varphi_0$ can be analytically extended to $y_0=0$ and $\varphi_0(0)=0$.

On the other hand, taking into account that
\begin{equation}
\label{eq:conveniointegral}
\mathcal{F}(y_1,y_0)=-\mathcal{F}(y_0,y_1)
\end{equation}
then the function 
$$
\varphi_0:y_0\in\mathcal{I}\longrightarrow\varphi_0(y_0)\in\mathcal{I}
$$
is an involution.

In the proof of statements \ref{th:it:a=0} and \ref{th:it:aneq0.q=0}, variable $y_1$ has been obtained as a function of $y_0$ in the complete interval $\mathcal{I}$ for the level curves of function $\mathcal{F}$. Note that, analogously, variable $y_0$ could have been obtained as a function of $y_1$ in the complete interval.

The proof of statement \ref{th:it:aneq0.qneq0} (where $a\cdot q\neq0$) is similar to the proof of statement \ref{th:it:aneq0.q=0} (in fact, it is even easier because the level curves do not contain the origin and the singular case does not appear).

Let us consider $q>0$. For every $y_0\in\mathcal{I}$ there exist two unique and different values 
$\phi_q(y_0),\varphi_q(y_0)\in\mathcal{I}$ such that $\mathcal{F}(\phi_q(y_0),y_0)=\mathcal{F}(\varphi_q(y_0),y_0)=q$. Moreover, the inequality $\phi_q(y_0)\cdot\varphi_q(y_0)<0$ is true (without loss of generality, we can assume that function $\varphi_q$ is strictly negative). By means of the implicit function theorem for real analytical functions, functions $\phi_q$ and $\varphi_q$ are analytic in the complete interval $\mathcal{I}$ (see Lema \ref{lemma:analitica}) and satisfy the inequalities 
$$
  y_0 \frac{d\phi_q}{d y_0}(y_0)=\frac{y_0^2 V(0,\phi_q(y_0))}{\phi_q(y_0) V(0,y_0)}>0, \quad y_0 \frac{d\varphi_q}{d y_0}(y_0)=\frac{y_0^2 V(0,\varphi_q(y_0))}{\varphi_q(y_0) V(0,y_0)}<0
$$
for $y_0\in\mathcal{I}\setminus\{0\}$. Therefore, both functions are unimodal and their critical point (minimum for $\phi_q$ and maximum for $\varphi_q$) are located at $y_0=0$. 

For $q<0$, an analogous reasoning with a simple interchange of variables $y_1$, $y_0$ and assuming that $\varphi_q$ is strictly positive, leads to the corresponding result.
Besides that, since \eqref{eq:conveniointegral} holds
then $\phi_q\equiv\varphi_{-q}$ for every $q\in\mathbb{R}\setminus\{0\}$.

Finally, again from the divergence of the integrals shown in \eqref{eq:divergenciaextremos}, it is now obvious to see that the restricted functions given in \eqref{eq:restrictedfunctions} are bijective. 

\end{proof}

\begin{remark}
\label{rem:biyectividadgeneral}
The graphs of the restricted functions given in \eqref{eq:restrictedfunctions} are contained into the fourth quadrant
since this is the natural quadrant for the left Poincar\'e half-map. The restrictions to this fourth quadrant of the functions $\varphi_q$ given in items \ref{th:it:a=0} and \ref{th:it:aneq0.q=0} of Theorem \ref{th:curvasnivel} are also bijective.
 Obviously, the functions $\phi_q$ and $\varphi_q$ can be also restricted to the other quadrants to get bijective functions.

\end{remark}

In Figure \ref{fig:curvasnivel}, several level maps of $\mathcal{F}(y_1,y_0)$ are shown.

\begin{figure}[!h]
    \begin{center}
    \begin{tabular}{c@{}c@{}c}
     \includegraphics[width=0.31\linewidth]{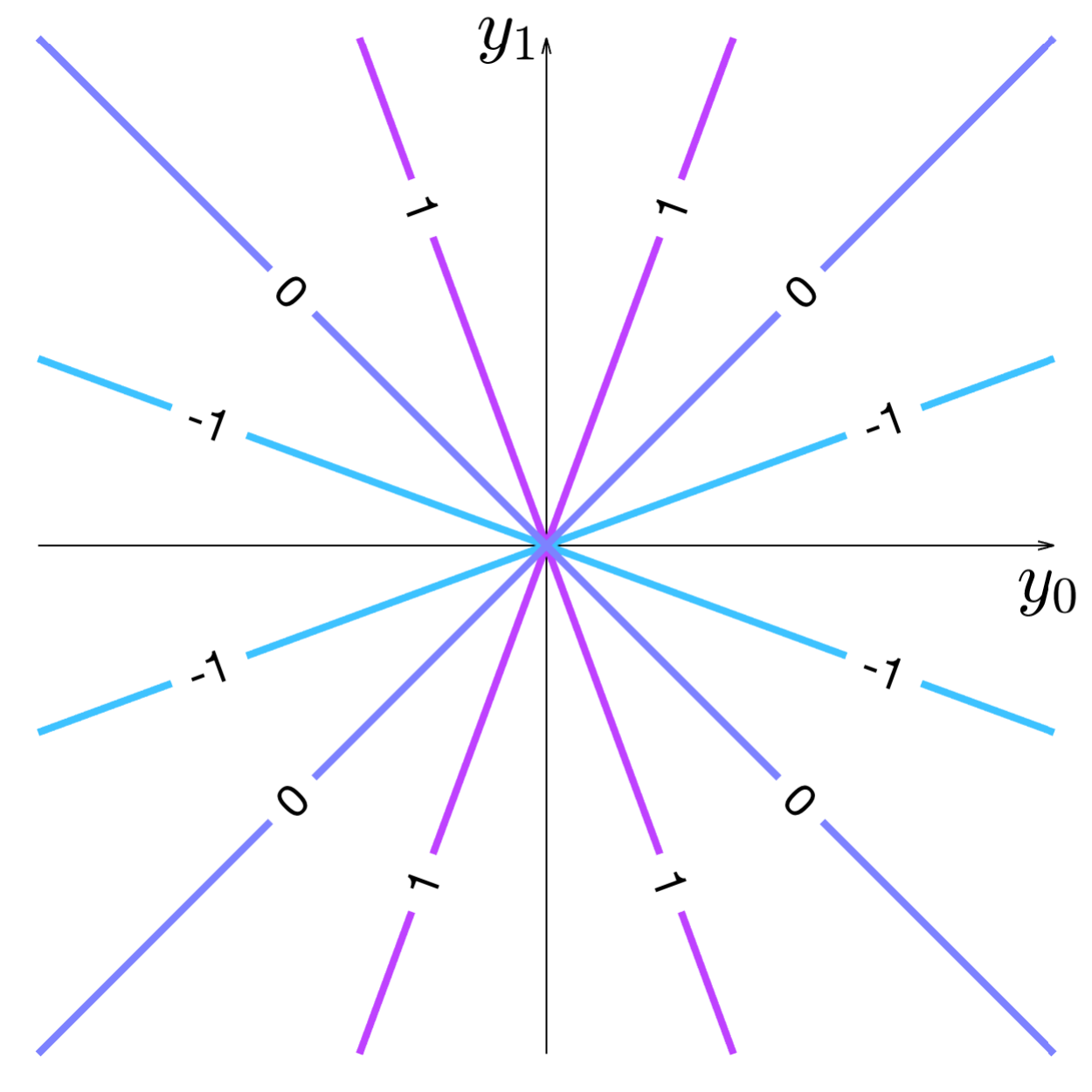}&\ \
     \includegraphics[width=0.31\linewidth]{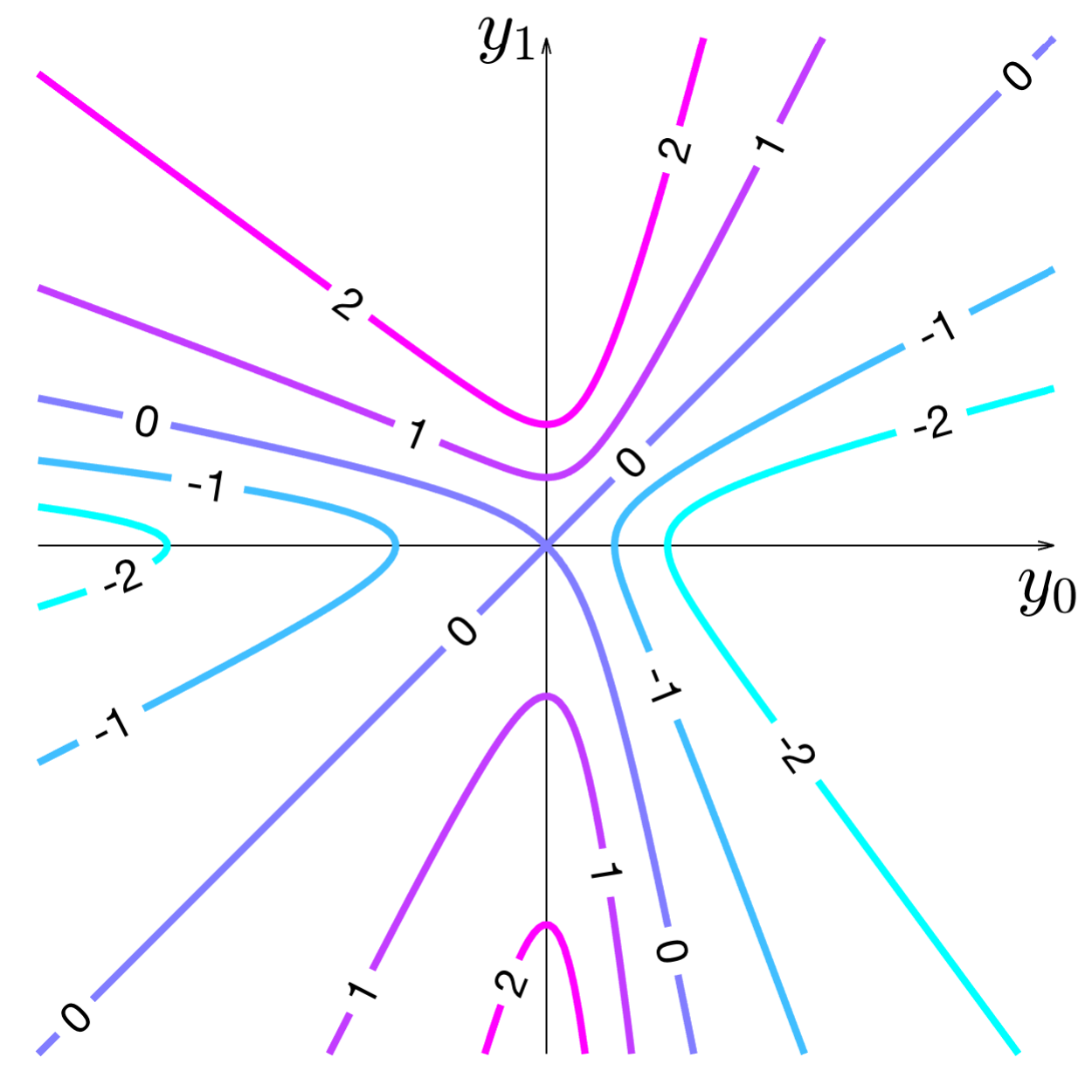}&\ \
     \includegraphics[width=0.31\linewidth]{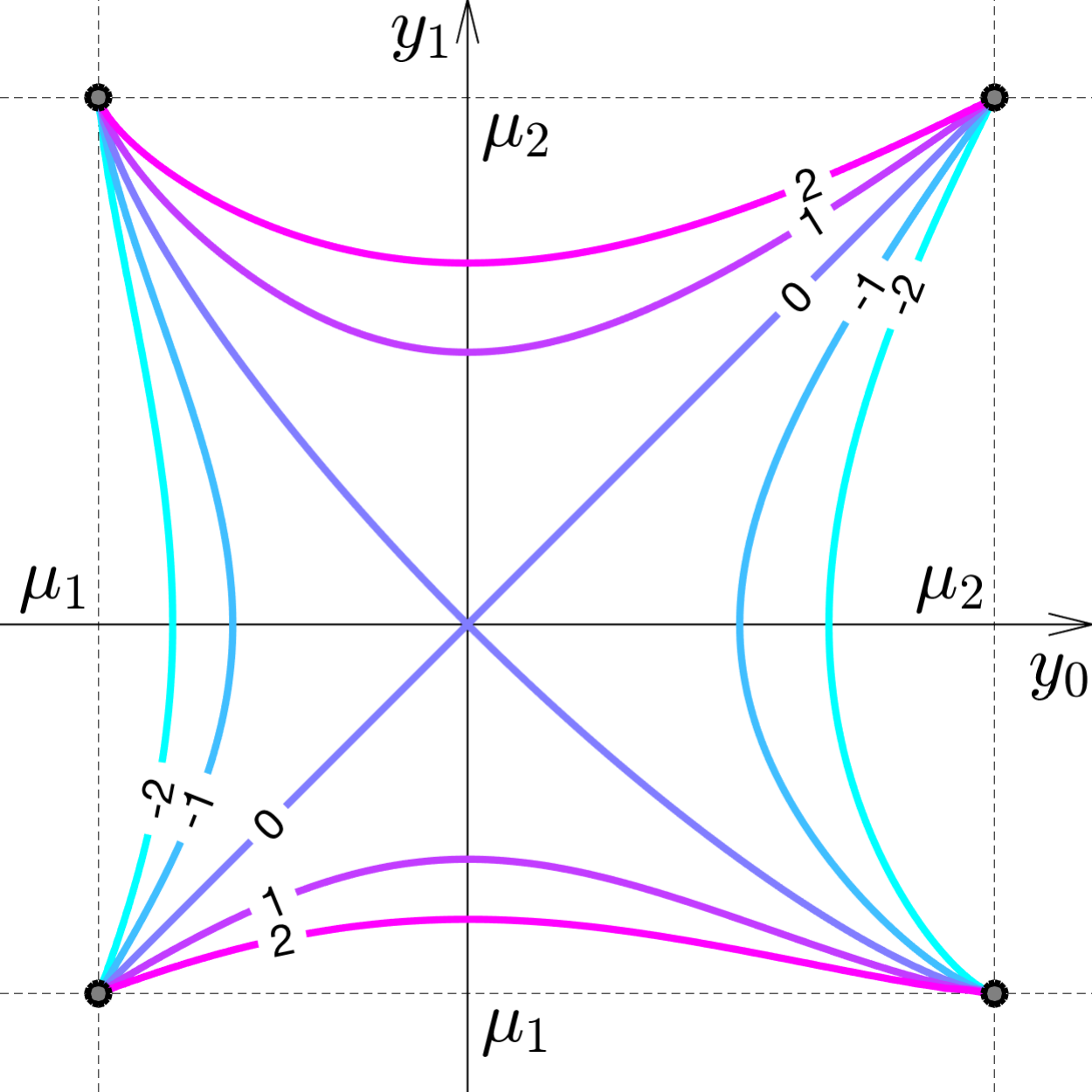}\\
     {\footnotesize (a)} & {\footnotesize (b)}& {\footnotesize (c)}
    \end{tabular}
    \end{center}
     \caption{Level curves of the index-like function $\mathcal{F}$ in $\mathcal{I}^2$ for several values $q\in\mathbb{R}$ when: (a) $a=0$; (b) $a\neq0$ and $4D-T^2>0$; (c) $a\neq0$ and $D<0$. Case (a) illustrates item 1 of Theorem \ref{th:curvasnivel}. Case (b) corresponds to items 2 ($q=0$) and 3 ($q\neq0$) for values of the parameters such that system \eqref{eq:lienard} has a focus or a center. Case (c) corresponds to items 2 ($q=0$) and 3 ($q\neq0$) for values of the parameters such that system \eqref{eq:lienard} has a saddle equilibrium and, therefore, the set $\mathcal{I}^2$ is bounded.
     }\label{fig:curvasnivel}
\end{figure}

\begin{remark}
An immediate conclusion of the last Theorem is that 
every level curve given by $\mathcal{F}(y_1,y_0)=q$, $q\in\mathbb{R}$, in $\mathcal{I}^2$ is an orbit of the following third degree polynomial planar 
system of differential equations
\begin{equation}
\label{eq:sdo}
\left\{
\begin{array}{rcl}
\dot{x}&=&y \, V(0,x),\\
\dot{y}&=&x \, V(0,y).
\end{array}
\right.
\end{equation}
Reciprocally, for $a\neq0$ every orbit of system \eqref{eq:sdo} in $\mathcal{I}^2$ is a level curve of the function $\mathcal{F}$. For $a=0$, this is also true except for the coordinate axes, which are foliated by equilibria of system \eqref{eq:sdo}.
Furthermore, for $a\neq0$, the origin is the unique equilibrium point in $\mathcal{I}^2$ and it is a saddle point (see Figure \ref{fig:curvasnivel}).

A differential equation analogous to \eqref{eq:ode} appeared in \cite{GGG10} as an equation for local Poincar\'e maps, close to an orbit, between two transversal sections to this orbit.  
As it can be deduced from the rest of this work, equation \eqref{eq:ode} characterizes the global left (and right) Poincar\'e half-map of system \eqref{eq:lienard} associated to the section $\Sigma$. Obviously, to identify
the solution corresponding to these maps, a suitable initial condition must be added to the differential equation in both cases.

Anyway, as evidenced in \cite{CaFeGaNoPR,CaFeNoPR1,CaFeNoPR2},
the application to many problems of the integral equation \eqref{eq:curvasnivel}, the differential equation \eqref{eq:ode}, or the system \eqref{eq:sdo}, does not need explicit integration. Therefore, the properties of the Poincar\'e half-maps can be directly obtained from them by avoiding the appearance of unnecessary cases.
\end{remark}

\begin{remark}
For $a\neq 0$ the origin is a saddle point of the surface $z=\mathcal{F}(y_1,y_0)$. From this point of view and roughly speaking,
the set of level curves of the index-like function $\mathcal{F}$ can be understood locally as the universal unfolding of $y_1^2-y_0^2$ (see simple bifurcations at \cite{GoSc85}).
\end{remark}

The analysis of the level curves of the index-like function $\mathcal{F}$ led us to the bijective functions $\varphi_q$ restricted to the fourth quadrant that have been mentioned in Remark \ref{rem:biyectividadgeneral}. For some concrete values of $q$, these functions (if $q\geqslant0$) or their inverse ones (if $q<0$) will give the left Poincar\'e half-map of system
\eqref{eq:lienard}. The following section is devoted to proof this assertion, what closes the integral characterization of the Poincar\'e half-maps.

\section{Recovery of half-Poincar\'e maps and flight time}\label{sec:recovery}
This section is devoted to the formulation and proof of Theorem  \ref{th:implicaizquierda}, a reciprocal to Theorem \ref{th:implicaderecha}. That is, not only a Poincar\'e half-map defines an integral relationship between the initial point and its image (see equation \eqref{eq:curvasnivel}) but this integral relationship defines, for the correct values of the parameters, the Poincar\'e half-map. In order to do so, it is convenient to 
state and prove the following Lemma.

\begin{lemma}
Let us consider the inverse integrating factor $V$
given in \eqref{eq:fii} and the index-like function $\mathcal{F}$ given in \eqref{eq:F}. Let us assume that condition \eqref{eq:cond_aD} holds and 
that the open interval $\mathcal{I}$ defined in \eqref{eq:I} is not empty.
Let be $c\in\mathbb{R}$ and $y_0,y_1\in \mathcal I$
such that the equality
$
\displaystyle 
\mathcal{F}(y_1,y_0)
=c T
$ 
holds.
Then, the equality
\begin{equation}
\label{eq:equivalencialogaritmo}
\log\left(\frac{V(0,y_1)}{V(0,y_0)}\right)= T \left(2 D c
+
\int_{y_1}^{y_0}\frac{a}{V(0,y)}dy\right)
\end{equation}
is true. In fact, for $D\neq0$, both equalities
are equivalent.
\end{lemma}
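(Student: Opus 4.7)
The plan rests on the observation that $V(0,y) = Dy^2 - aTy + a^2$ has $y$-derivative $2Dy - aT$, and therefore the logarithmic derivative decomposes as
$$
\frac{d}{dy}\log V(0,y) \;=\; \frac{2Dy - aT}{V(0,y)} \;=\; -2D\cdot\frac{-y}{V(0,y)} \;-\; T\cdot\frac{a}{V(0,y)},
$$
expressing the integrand that produces $\log V$ as precisely the combination of the two integrands appearing in the statement. First I would integrate this identity from $y_1$ to $y_0$ inside $\mathcal I$, where $V(0,y)>0$ away from the (at most one) singularity at $y=0$ that can occur when $a=0$. The left-hand side yields $\log\!\bigl(V(0,y_0)/V(0,y_1)\bigr)$, the first term on the right yields $-2D\,\mathcal F(y_1,y_0)$ by the definition of the index-like function, and the second yields $-T\int_{y_1}^{y_0} a/V(0,y)\,dy$. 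Rearranging gives the unconditional identity
$$
\log\!\left(\frac{V(0,y_1)}{V(0,y_0)}\right) \;=\; 2D\,\mathcal F(y_1,y_0) \;+\; T\int_{y_1}^{y_0}\frac{a}{V(0,y)}\,dy,
$$
from which \eqref{eq:equivalencialogaritmo} follows at once upon substituting the hypothesis $\mathcal F(y_1,y_0)=cT$.

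For the converse direction, assuming $D\ne 0$, I would simply subtract \eqref{eq:equivalencialogaritmo} from the displayed unconditional identity above. This gives $2D\bigl(\mathcal F(y_1,y_0) - cT\bigr) = 0$, and the hypothesis $D\ne 0$ lets us divide through to recover $\mathcal F(y_1,y_0) = cT$. Thus under $D\ne 0$ the two equalities are indeed equivalent.

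The only delicate point, and the main obstacle, is justifying the integration step when $a=0$, since in that case both $\log V(0,y) = \log|D| + 2\log|y|$ and the integrand $-y/V(0,y) = -1/(Dy)$ of $\mathcal F$ are singular at $y=0$. Here the Cauchy principal value is tailored precisely so that the logarithmic singularities cancel: splitting the interval as $[y_1,-\varepsilon]\cup[\varepsilon,y_0]$ per the definition \eqref{eq:PV}, the boundary contributions from $\pm\varepsilon$ are symmetric and drop out as $\varepsilon\searrow 0$, so that both sides of the displayed identity remain finite and continue to agree. Moreover the term $\int_{y_1}^{y_0} a/V(0,y)\,dy$ trivially vanishes when $a=0$, so the identity reduces to $\log(y_1^2/y_0^2) = 2DcT$, which can be checked directly against the explicit formula $\mathcal F(y_1,y_0) = D^{-1}\log|y_1/y_0|$ from \eqref{eq:fparaa=0}. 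This confirms the formula and the equivalence without any extra case analysis.
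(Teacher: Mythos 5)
Your proof is correct and follows essentially the same computation as the paper: the paper's splitting $\frac{-y}{V(0,y)}=-\frac{1}{2D}\frac{2Dy-aT}{V(0,y)}+\frac{1}{2D}\frac{-aT}{V(0,y)}$ is exactly your identity for $\frac{d}{dy}\log V(0,y)$ multiplied by $-2D$, and the converse for $D\neq0$ is obtained in both cases by the same linear rearrangement. Your organization has the minor advantage of never dividing by $2D$, so $D=0$ needs no separate treatment, and your principal-value cancellation at $y=0$ (valid since $V(0,y)=Dy^{2}$ is even when $a=0$) replaces the paper's appeal to the explicit formula \eqref{eq:fparaa=0}; both routes are sound.
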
 
\begin{proof}
For $D=0$, since condition \eqref{eq:cond_aD} holds, then $a\neq 0$ and it is direct to see that \eqref{eq:equivalencialogaritmo}
is true for every values $y_0$ and $y_1$ (even if $\mathcal{F}(y_1,y_0)\neq
c T$).

For the generic case $D\cdot a\neq0$, it is trivial that the Cauchy principal value can be obviated 
from the definition of $\mathcal{F}$. 
Since
$$
  \displaystyle \int_{y_1}^{y_0} \frac{-y}{{V(0,y)}} dy =-\frac{1}{2 D}\int_{y_1}^{y_0} \frac{2 D y-aT}{{D y^2-a T y+a^2}} dy
  +\frac{1}{2 D}\int_{y_1}^{y_0} \frac{-aT}{{D y^2-a T y+a^2}} dy,
$$
the proof is direct.

For the case $a=0$, it is $\frac{a}{V(0,y)}\equiv0$ and therefore the second term of the right-hand side of equation 
\eqref{eq:equivalencialogaritmo} is zero. Moreover, from condition  \eqref{eq:cond_aD}, the parameter $D$ does not vanish.
Besides that, from equality \eqref{eq:fparaa=0} it is known that
$$
2Dc T=\displaystyle 2 D\operatorname{PV}\!\!\int_{y_1}^{y_0} \frac{-y}{V(0,y)} dy =
2 \log\left|\frac{y_1}{y_0}\right|=\log\left(\frac{V(0,y_1)}{V(0,y_0)}\right).
$$
This concludes the proof.
\end{proof}

Equation \eqref{eq:equivalencialogaritmo} together with equality \eqref{expr:Vphi} suggest an expression for the left flight time. In fact, this expression (shown in \eqref{eq:tiempodevuelo}) will be a crucial element for the proof of the next Theorem, where we take the last step to definitively show the equivalence between Poincar\'e half-maps and some suitable level curves of the index-like function $\mathcal{F}$ given in \eqref{eq:F}.

\begin{theorem} \label{th:implicaizquierda}
Let us consider the inverse integrating factor $V$
given in \eqref{eq:fii} and the index-like function $\mathcal{F}$ given in \eqref{eq:F}. Let us assume that condition \eqref{eq:cond_aD} holds and 
that the open interval $\mathcal{I}$ defined in \eqref{eq:I} is not empty.
Let be $c\in\mathbb{R}$ and
$y_0,y_1\in\mathcal{I}$, $y_0 \geqslant 0$, $y_1 \leqslant 0$ such that
\begin{equation}
\label{eq:cT}
  \displaystyle 
  \mathcal{F}(y_1,y_0)
=c T.
\end{equation}
Assume that one of the following conditions holds:
\begin{enumerate}
\item[(i)] {$c=0$} and {$a>0$}, \quad
\item[(ii)] {$\displaystyle c=\frac{\pi}{D\sqrt{4D-T^2}}\in\mathbb{R}$} and $a=0$,\quad
\item[(iii)] {$\displaystyle c=\frac{2 \pi}{D\sqrt{4D-T^2}}\in\mathbb{R}$} and $a<0$.
\end{enumerate}
Then $y_1$ is the image of $y_0$ by the left Poincar\'e half-map of system \eqref{eq:lienard} related to 
the Poincar\'e section $\Sigma=\{x=0\}$.
Moreover, the corresponding left flight time is 
\begin{equation}
\label{eq:tiempodevuelo}
  \displaystyle \tau(y_0)=2 D c+\int_{y_1}^{y_0} \frac{a}{{V(0,y)}}dy.
\end{equation}
\end{theorem}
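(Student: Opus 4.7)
The plan is to set $\tau$ as in \eqref{eq:tiempodevuelo} and prove that the orbit $\Psi(t;y_0)$ crosses $\Sigma$ again at $(0,y_1)$ exactly at $t=\tau$, remaining in $\{x<0\}$ for $t\in(0,\tau)$. First I would apply the previous Lemma, which turns the hypothesis $\mathcal{F}(y_1,y_0)=cT$ into the equivalent identity
$$
V(0,y_1)=V(0,y_0)\,e^{T\tau}.
$$
Combined with the evolution formula \eqref{expr:Vphi}, which in our setting reads $V(\Psi(t;y_0))=V(0,y_0)\,e^{Tt}$, this already tells us that the level set $\{V=V(0,y_1)\}$ is attained by the orbit exactly at time $\tau$.

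Next I would introduce the actual first-return time $\tau^{\star}=\sup\{t>0:\Psi_1(s;y_0)<0\text{ for every }s\in(0,t)\}$ and show $\tau^{\star}<\infty$. This is the main obstacle. The hypothesis $y_0,y_1\in\mathcal{I}$ is essential here: by construction of $\mathcal{I}$ in \eqref{eq:I}, the segment of $\Sigma$ with endpoints $(0,y_1)$ and $(0,y_0)$ lies in a single connected component of $\mathbb{R}^2\setminus V^{-1}(\{0\})$, so that no invariant straight manifold of \eqref{eq:lienard} can obstruct the passage between these two points in $\{x<0\}$. In the focus/center scenarios (S$_1$) and (S$_2$) the ellipsoidal level sets of $V$ (see Remark \ref{rem:elipses}) confine the orbit to a compact region and force a return in finite time; in (S$_0$) the same conclusion follows from the geometry depicted in Fig.~\ref{fig:casoS0} together with the absence of equilibria and invariant manifolds inside the would-be Jordan curve.

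Once $\tau^{\star}<\infty$, Theorem \ref{th:implicaderecha} applied to the returning orbit yields $\mathcal{F}(\Psi_2(\tau^{\star};y_0),y_0)=d_k$ for the scenario index $k\in\{0,1,2\}$. A short inspection of the signs of $a$ and of the position of the equilibrium shows that the three conditions (i), (ii), (iii) correspond respectively to scenarios (S$_0$), (S$_1$), (S$_2$), and that $d_k=cT$ in each case, so $\mathcal{F}(\Psi_2(\tau^{\star};y_0),y_0)=\mathcal{F}(y_1,y_0)$; the bijectivity of the appropriate branch of $\varphi_q$ stated in Theorem \ref{th:curvasnivel} (see also Remark \ref{rem:biyectividadgeneral}) then forces $\Psi_2(\tau^{\star};y_0)=y_1$, whence $y_1=P(y_0)$. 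Finally, comparing $V(\Psi(\tau^{\star}))=V(0,y_0)\,e^{T\tau^{\star}}$ with the identity obtained in the first step gives $\tau^{\star}=\tau$ whenever $T\neq 0$; for $T=0$ the formula \eqref{eq:tiempodevuelo} reduces to $\tau=\int_{y_1}^{y_0}a/V(0,y)\,dy$, and one verifies $\tau^{\star}=\tau$ directly by integrating $dt=dy/(Dx-a)$ along the orbit after eliminating $x$ via the conserved quantity $V$.
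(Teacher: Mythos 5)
Your overall scheme (use the Lemma to get $V(0,y_1)=V(0,y_0)e^{T\tau}$, identify the return point through Theorem \ref{th:implicaderecha} plus the bijectivity of $\varphi_q$ from Theorem \ref{th:curvasnivel}, recover the time from $V(\Psi(t;y_0))=V(0,y_0)e^{Tt}$, and treat $T=0$ separately) coincides with large parts of the paper's argument, but it hinges on a step the paper deliberately avoids and that you do not actually prove: the finiteness of the first return time $\tau^{\star}$. Your justification fails on both fronts. For the focus cases, the level sets of $V$ are \emph{not} invariant when $T\neq0$ (precisely because $V(\Psi(t;y_0))=V(0,y_0)e^{Tt}$), so they do not confine the orbit to a compact region; a correct argument there would have to invoke monotone rotation around the equilibrium, which you do not state. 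More seriously, for the scenario-(S$_0$) configurations under condition (i) (nodes, degenerate nodes, saddles, systems without equilibria), the assertion that every $y_0\in[0,\mu_2)$ has a finite return is exactly the nontrivial content: the fact that $(0,y_0)$ and $(0,y_1)$ lie in one connected component $\mathcal{C}$ of $\mathbb{R}^2\setminus V^{-1}(\{0\})$ does not exclude that the orbit escapes to infinity inside $\mathcal{C}\cap\{x<0\}$ (for an unstable node, for instance, orbits tend to infinity asymptotically parallel to an eigendirection that points into $\{x<0\}$, and one must argue that initial conditions in $\mathcal{I}$ are on the correct side to come back). Appealing to ``the geometry depicted in Fig.~\ref{fig:casoS0}'' is not a proof — the domain descriptions in Section \ref{sec:poincare} are descriptive, and making them rigorous by hand would reintroduce precisely the spectral case-by-case analysis the theorem is meant to bypass.

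The paper's proof never establishes a return a priori. After the conditional argument you also use (no return can occur strictly before $\tilde{\tau}$, via Theorem \ref{th:implicaderecha}, bijectivity, and the $V$-evolution), it proves directly that $\Psi(\tilde{\tau};y_0)=(0,y_1)$: since $V(\Psi(\tilde{\tau};y_0))=V(0,y_1)$, the two points are joined by an arc $\widetilde{\Gamma}_3$ of the level set $\{V=V(0,y_1)\}$; closing the curve with the segment $\widetilde{\Gamma}_1$ and the orbit piece $\widetilde{\Gamma}_2$ and using \eqref{eq:cT} together with the winding-number computation of Remark \ref{rem:windingnumber} yields $\int_{\widetilde{\Gamma}_3}G\cdot d\mathbf{r}=0$, and then the identity $G\cdot\nabla V^{\perp}=T\neq0$ forces the arc to degenerate to a point, so $\Psi(\tilde{\tau};y_0)=(0,y_1)$. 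Thus existence of the return, its location, and the flight time all come out of a single computation. Unless you supply an honest, case-free proof of $\tau^{\star}<\infty$ (the crux), your proposal has a genuine gap at its central step; the remaining parts, including the matching of (i), (ii), (iii) with the scenarios and the $T=0$ discussion via $dt=dy/(Dx-a)$, are consistent with the paper's proof and its Remark \ref{rem:pruebaalternativaT=0}.
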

\begin{proof}
The proof is immediate for the limit cases of the left Poincar\'e half-map when $y_0=y_1=0$ and point $(y_0,y_1)=(0,0)$ is an invisible tangency or the equilibrium of system \eqref{eq:lienard} (see Remark \ref{rem:tangenciainvisible}), that is, when $y_0=y_1=0$ and $a\geqslant0$. From now till the end of the proof, we assume that this situation does not occur.

Let us suppose that there exist $y_0,y_1\in\mathcal{I}$, $y_0 \geqslant 0$, $y_1 \leqslant 0$ satisfying equality \eqref{eq:cT}. Let us assume that 
one of the items (i), (ii), or (iii) holds.
 Let us define $\tilde{\tau}$ as the right-hand side of the equality given in \eqref{eq:tiempodevuelo} and let be $\Psi(t;y_0)=(\Psi_1(t;y_0),\Psi_2(t;y_0))$ the orbit of system  \eqref{eq:lienard} defined in \eqref{eq:orbitaflujo}.
Under these assumptions, the theorem will be proved if the following conditions are verified:
\begin{itemize}
\item[(C1)] $\tilde{\tau}>0$.
\item[(C2)] $\Psi_1(t;y_0)<0$ for every $t\in(0,\tilde{\tau})$.
\item[(C3)] $\Psi(\tilde{\tau};y_0)=(0,y_1)$.
\end{itemize}

Condition (C1) is trivial for items (i) and (ii). The proof of condition (C1) for item (iii), where $a$ is negative and $c=\frac{2 \pi}{D\sqrt{4D-T^2}}\in\mathbb{R}$, is a direct conclusion from the inequalities
$$
0\leqslant \int_{y_1}^{y_0} \frac{-a}{{V(0,y)}}dy \leqslant \int_{-\infty}^{+\infty} \frac{-a}{{V(0,y)}}dy=\frac{2 \pi}{\sqrt{4D-T^2}}=Dc.
$$

For the proof of conditions (C2) and (C3) we will assume that the trace $T$ does not vanish. Notice that if the proof of the treorem is obtained for $T\neq0$, the results are immediately extended to the case $T=0$ by using the continuity of solutions of differential equations and integrals with respect to parameters.
After the proof of this theorem, an alternative proof for the case $T=0$ is given in Remark \ref{rem:pruebaalternativaT=0},
where some interesting observations
are also added.

The following reasoning proves that $\Psi_1(\tilde{\tau};y_0)\leqslant0$ and $\Psi_1(t;y_0)<0$ for every $t\in(0,\tilde{\tau})$.

Let us assume that there exists a value $\tau^*\in(0,\tilde{\tau}]$ such that $\Psi_1(\tau^*;y_0)=0$ and $\Psi_1(t;y_0)<0$ for every $t\in(0,\tau^*)$. This fact implies that $y_1^*:=\Psi_2(\tau^*;y_0)\leqslant0$.
From the definition of the left Poincar\'e half-map it is $y_1^*=P(y_0)$ and from
Theorem \ref{th:implicaderecha} it is clear that  $\operatorname{PV}\int_{y_1^*}^{y_0} \frac{-y}{{V(0,y)}} dy =c T$ for the corresponding item (i), (ii) or (iii). The bijectivity of functions $\varphi_{q}$ 
when $y_0\geqslant0$ and $y_1\leqslant0$  given in Theorem \ref{th:curvasnivel} implies that
$y_1^*=y_1$.

Property  \eqref{expr:Vphi}, applied to system \eqref{eq:lienard}, implies
$V(0,y_1^*)=\exp\left({T \tau^*}\right) \, V(0,y_0)$, because $\operatorname{div}L=T$ for vector field $L$ defined in \eqref{eq:lienard_L}.
From relationship \eqref{eq:equivalencialogaritmo} it is $V(0,y_1)=\exp\left({T \tilde{\tau}}\right) \,V(0,y_0) $.
Since $y_1^*=y_1$ and $T\neq0$, the equality $\tilde{\tau}=\tau^*$ is true.

Therefore, $\Psi_1(\tilde{\tau};y_0)\leqslant0$ and $\Psi_1(t;y_0)<0$ for every $t\in(0,\tilde{\tau})$. This means that condition (C2) holds.

Finally, in order to prove condition (C3),  we are going to integrate the orthogonal vector field $G$ defined in \eqref{eq:campoortogonal} on a suitable closed curve $\widetilde{\Gamma}=\widetilde{\Gamma}_1\cup\widetilde{\Gamma}_2\cup\widetilde{\Gamma}_3$, constructed by joining a segment $\widetilde{\Gamma}_1$ contained at the separation line $\Sigma$, a piece of an orbit
$\widetilde{\Gamma}_2$ of the system and a piece of a level curve $\widetilde{\Gamma}_3$ of the inverse integrating factor $V$ (see Figure \ref{fig:pruebatiempovuelo}). Concretely,
the first two curves are the segment $\widetilde{\Gamma}_1=\{(0,y)\in\mathbb{R}^2: y\in(y_1,y_0)\}$ and the piece of orbit $\widetilde{\Gamma}_2=\{\Psi(t;y_0): t\in[0,\tilde{\tau})\}$. 

\begin{figure}[h!]
\begin{center}
\includegraphics[width=0.5\linewidth]{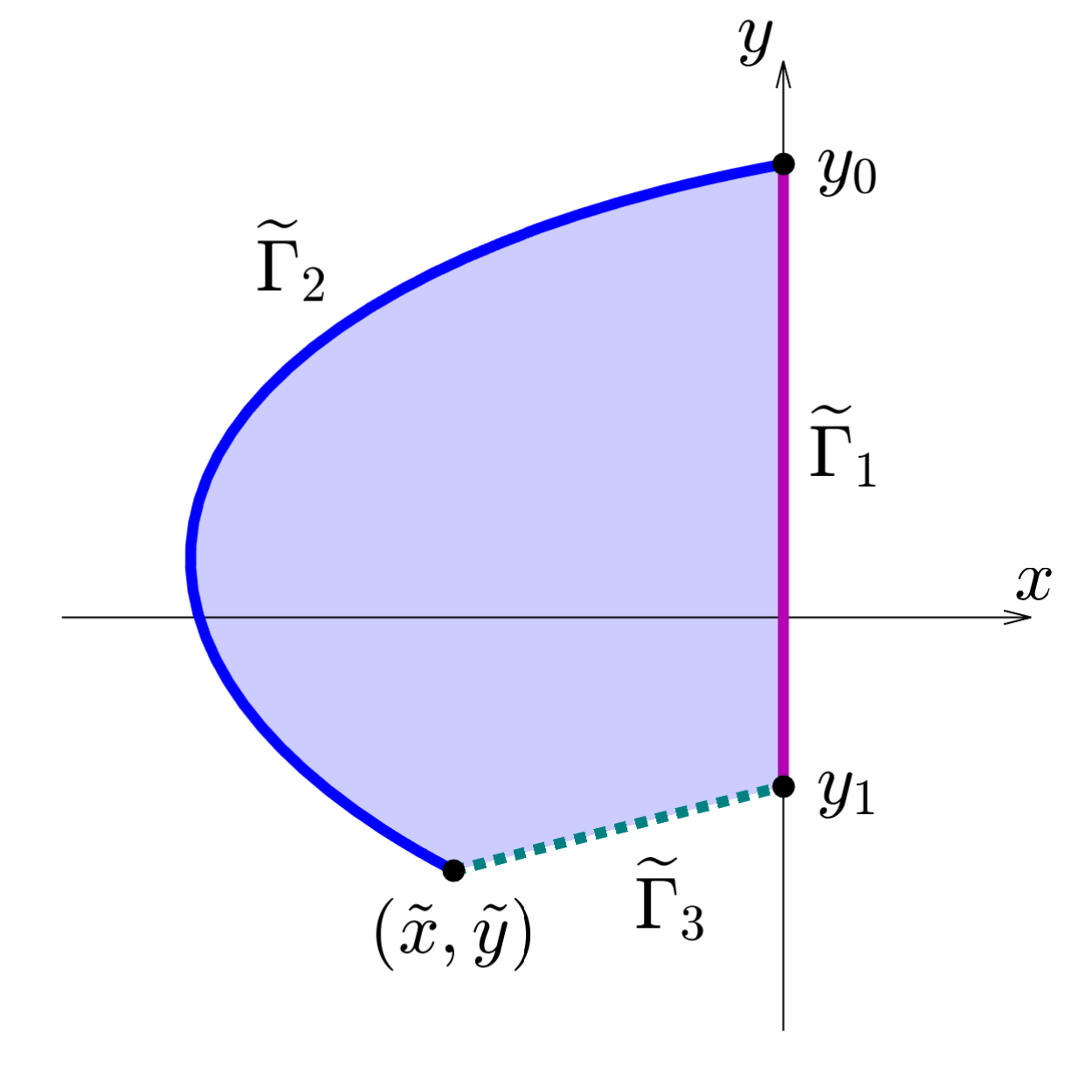}
\end{center}
\caption{Schematic drawing of curve $\widetilde{\Gamma}=\widetilde{\Gamma}_1\cup\widetilde{\Gamma}_2\cup\widetilde{\Gamma}_3$ defined in the proof of Theorem \ref{th:implicaizquierda}. 
Curve $\widetilde{\Gamma}_3$ is drawn as a dotted line because points $(\tilde{x},\tilde{y})$ and $(0,y_1)$ are proved to coincide. }\label{fig:pruebatiempovuelo}
\end{figure}

To define the last portion of $\widetilde{\Gamma}$, let us consider point $(\tilde{x},\tilde{y})=\Psi(\tilde{\tau};y_0)$, that satisfies the equality $V(\tilde{x},\tilde{y})=\exp\left({T \tilde{\tau}}\right) \, V(0,y_0)=V(0,y_1)$ as long as property \eqref{expr:Vphi} and relationship \eqref{eq:equivalencialogaritmo} hold.
Therefore, since $y_0, y_1 \in \mathcal{I}$, points $(\tilde{x},\tilde{y})$ and $(0,y_1)$ are connected by a piece of a level curve of the inverse integrating factor $V$ where, moreover, $V$ is strictly positive. Notice that the portion of the level curve of function $V$ that connects these points is unique except for $4D-T^2>0$, where the level curves are ellipses (see Remark \ref{rem:elipses}), and hence there are two choices of simple curves. Anyway, it is always possible to select this piece of level curve $\widetilde{\Gamma}_3$ so that the complete closed curve $\widetilde{\Gamma}$ encircles no equilibria for case (i) or the equilibrium of system \eqref{eq:lienard} for case (iii). Trivially, in case (ii) the equilibrium is located at segment $\widetilde{\Gamma}_1$.

By reasoning as in the proof of Theorem \ref{th:implicaderecha}, the integration of the orthogonal vector field $G$ along the closed curve
$\widetilde{\Gamma}$ together with equality \eqref{eq:cT} leads to 
\begin{equation}
\label{eq:curvafalsa}
\int_{\widetilde{\Gamma}_3} G \cdot d\mathbf{r}=0.
\end{equation}
On the other hand, since $\left.V\right|_{\widetilde{\Gamma}_3}$ is constant, the parameterization of $\widetilde{\Gamma}_3\equiv \mathbf{r}(s)=(x(s),y(s))$, $s\in[s_0,s_1]$, may be chosen in such a way that its tangent vector is $\nabla V (x(s),y(s))^{\perp}
$, where the orthogonal vector is taken as defined in Remark \ref{rem:ortogonal}. From equalities \eqref{eq:curvafalsa} and \eqref{eq:iif_ortogonal}, it holds
$$
0=\int_{\widetilde{\Gamma}_3} G \cdot d\mathbf{r}=\int_{s_0}^{s_1} G(x(s),y(s)) \cdot \nabla V(x(s),y(s))^{\perp} ds=\int_{s_0}^{s_1} T ds=T (s_1-s_0).
$$
Since $T\neq0$ then $s_1=s_0$. That is, $(\tilde{x},\tilde{y})=(0,y_1)$ and the proof concludes. 
\end{proof}

\begin{remark}\label{rem:pruebaalternativaT=0}
In the proof of Theorem \ref{th:implicaizquierda}, the singular features of case $T=0$ have forced us to analyze it separately. Specifically, since for $T=0$, the inverse integrating factor $V$ is constant along each orbit of system \eqref{eq:lienard}, all the properties based on the variation of $V$ along the orbits are useless. For instance, the expression
$$
\tau=\frac1T\log\left(\frac{V(0,y_1)}{V(0,y_0)}\right),
$$
derived from equality \eqref{expr:Vphi} and used to obtain the left flight time $\tau$ between two points of the Poincar\'e map, can only be used for $T\neq0$.

Besides the presented proof for case $T=0$ (based on arguments of continuity with respect of parameters), there are two alternative proofs based on  ideas we would like to highlight. Notice that the reversibility of system \eqref{eq:lienard} for $T=0$ implies that $y_1=-y_0$.

The first proof (the classical way for linear systems) requires the integration of system \eqref{eq:lienard} for $T=0$ (trivial integration but with several distinguished cases), the obtention of the left flight time by means of the imposition of the left Poincar\'e half-map conditions and the verification of the equality between the computed left flight time and the one given in \eqref{eq:tiempodevuelo}.

For the second alternative proof for $T=0$ (based on the computation of the left flight time from the hamiltonian character of the system),
we use the conservation of $V(x,y)=(Dx-a)^2+D y^2$ along the orbits of system \eqref{eq:lienard}, that is, at any point $(x,y)$ of the orbit that passes through point $(0,y_0)$ it holds $\dot{y}^2+D y^2=(Dx-a)^2+D y^2=a^2+D y_0^2$. For case (i) of Theorem \ref{th:implicaizquierda} and the piece of orbit contained in $\left\{x<0\right\}$ that connect points $(0,\pm y_0)$, it is true that $\dot{y}<0$ and so $\dot{y}=-\sqrt{a^2+D y_0^2-D y^2}$. Therefore, the left flight time is
\begin{equation} \label{eq:tiempovueloalternativo}
\tau(y_0)=\int_{y_0}^{-y_0} \frac{-dy}{\sqrt{a^2+D y_0^2-D y^2}}.
\end{equation}
It is direct to check that the change of variable $$w=\sqrt{\frac{V(0,y_0)}{V(0,y)}}\, y,$$
transforms the integral of expression \eqref{eq:tiempodevuelo} into \eqref{eq:tiempovueloalternativo}.

In cases (ii) and (iii), the condition $T=0$ implies that system \eqref{eq:lienard} corresponds to a linear center and all the orbits are periodic with period $\frac{4 \pi}{\sqrt{4D-T^2}}$. Thus, for case (ii) the proof is immediate (the orbit is half of a complete periodic orbit) and for case (iii) expression \eqref{eq:tiempodevuelo} must be understood as the complete period minus the right flight time.
\end{remark}

The immediate and more relevant conclusion of Theorems \ref{th:implicaderecha} and \ref{th:implicaizquierda} is the following Corollary.

\begin{corollary}
\label{cor:final}
Let us assume that condition \eqref{eq:cond_aD} holds and that the open interval $\mathcal{I}=(\mu_1,\mu_2)$ defined in \eqref{eq:I} is not empty.  Let be functions $\varphi_q$, $q\in\mathbb{R}$ as given in Theorem \ref{th:curvasnivel}. 
Assume that $c\in\mathbb{R}$ satisfies one of the conditions (i), (ii), (iii) of Theorem \ref{th:implicaizquierda}.

If the trace verifies $T\geqslant0$ then the left Poincar\'e half-map of system \eqref{eq:lienard} related to 
the Poincar\'e section $\Sigma$ is $\varphi_{cT}:[0,\mu_2) \longrightarrow (\mu_1,\varphi_{cT}(0)]$.

If the trace verifies $T<0$ then the left Poincar\'e half-map of system \eqref{eq:lienard} related to 
the Poincar\'e section $\Sigma$ is $\varphi^{-1}_{cT}:[\varphi_{cT}(0),\mu_2) \longrightarrow (\mu_1,0]$.
\end{corollary}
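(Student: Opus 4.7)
The plan is to derive the corollary as a direct consequence of Theorems \ref{th:implicaderecha}, \ref{th:implicaizquierda}, and \ref{th:curvasnivel}. First I would fix $c$ satisfying one of conditions (i), (ii), (iii) of Theorem \ref{th:implicaizquierda} and set $q := cT$. The two theorems jointly give that, for $y_0 \geqslant 0$ and $y_1 \leqslant 0$ in $\mathcal{I}$, the equality $y_1 = P(y_0)$ holds if and only if $(y_1,y_0) \in \mathcal{C}_q$. The forward implication is Theorem \ref{th:implicaderecha} read scenario by scenario, using the correspondence $(\text{S}_0) \leftrightarrow a>0$, $(\text{S}_1) \leftrightarrow a=0$, $(\text{S}_2) \leftrightarrow a<0$ described in Section \ref{sec:poincare}, which matches the three values $d_0, d_1, d_2$ with $cT$ under cases (i), (ii), (iii) respectively. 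The reverse implication is Theorem \ref{th:implicaizquierda} itself.

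Next, I would invoke Theorem \ref{th:curvasnivel} to identify the graph of $P$ with the branch of $\mathcal{C}_q$ lying in the ``fourth quadrant'' $\{y_0 \geqslant 0,\, y_1 \leqslant 0\} \cap \mathcal{I}^2$. By items 1, 2, 3 of that theorem, together with Remark \ref{rem:biyectividadgeneral}, this branch is precisely the graph of the function $\varphi_q$ selected by whichever item applies (item 1 if $a=0$; item 2 if $a\neq 0$ and $q=0$; item 3 if $a\neq 0$ and $q\neq 0$), and $\varphi_q$ is bijective onto its stated image.

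Finally, I would split on the sign of $T$. For $T \geqslant 0$ we have $q = cT \geqslant 0$; here $\varphi_q$ is presented (in items 1, 2, and item 3 for $q>0$) in the form $y_1 = \varphi_q(y_0)$, so the bijectivity statement yields $P = \varphi_{cT}:[0,\mu_2)\longrightarrow(\mu_1,\varphi_{cT}(0)]$ directly. For $T<0$ we have $q = cT \leqslant 0$; in the generic subcase $q<0$, item 3 of Theorem \ref{th:curvasnivel} presents $\varphi_q$ instead in the form $y_0 = \varphi_q(y_1)$, so the Poincar\'e half-map is the \emph{inverse}, giving $P = \varphi_{cT}^{-1}:[\varphi_{cT}(0),\mu_2)\longrightarrow(\mu_1,0]$. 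The residual subcase $q=0$ with $T<0$ can only arise in case (i), where $\varphi_0$ from item 2 is an involution with $\varphi_0(0)=0$, so $\varphi_0 = \varphi_0^{-1}$ and both formulas of the corollary agree.

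The main obstacle is purely bookkeeping: aligning the scenarios (S$_0$), (S$_1$), (S$_2$) with the constants in items (i), (ii), (iii), and reconciling the differing conventions under which $\varphi_q$ is written as a function of $y_0$ or of $y_1$ across the three items of Theorem \ref{th:curvasnivel}, so as to correctly assert that the relevant fourth-quadrant branch of $\mathcal{C}_{cT}$ is realized by $\varphi_{cT}$ itself when $T\geqslant 0$ and by $\varphi_{cT}^{-1}$ when $T<0$.
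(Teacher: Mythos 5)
Your overall route is the same as the paper's: the corollary is presented there, without a separate proof, as the immediate combination of Theorems \ref{th:implicaderecha} and \ref{th:implicaizquierda} with the level-curve description of Theorem \ref{th:curvasnivel}, and your first two paragraphs carry that out correctly. In particular, the identification of the scenarios with the sign of $a$ is legitimate under the corollary's hypotheses: condition (iii) forces $4D-T^2>0$, so $a<0$ there gives scenario (S$_2$), while $a>0$ gives (S$_0$) and $a=0$ with $\mathcal{I}\neq\emptyset$ gives (S$_1$); together with the bijectivity statements this yields the asserted domains and ranges (implicitly one also uses the Section \ref{sec:poincare} description to know that the domain of $P$ is contained in $\mathcal{I}$).

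There is, however, a concrete gap in your treatment of $T<0$. You route every subcase $q=cT<0$ through item \ref{th:it:aneq0.qneq0} of Theorem \ref{th:curvasnivel}, but that item assumes $a\neq0$, while condition (ii) allows $a=0$ together with $T<0$. In that subcase item \ref{th:it:a=0} applies, and there $\varphi_q$ is defined for \emph{every} $q$ by the convention $y_1=\varphi_q(y_0)$ with $\varphi_q(z)=-e^{Dq}z$. The fourth-quadrant branch of $\mathcal{C}_{cT}$ is then the graph of $y_1=-e^{DcT}y_0$, and a direct check (stable focus at the origin; contraction over half a period) gives $P(y_0)=-e^{DcT}y_0=\varphi_{cT}(y_0)$, whereas $\varphi_{cT}^{-1}(y_0)=-e^{-DcT}y_0$, a different function when $T\neq0$. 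So the blanket assertion ``for $q<0$, item 3 presents $\varphi_q$ as a function of $y_1$, hence $P=\varphi_{cT}^{-1}$'' does not cover this subcase, and your residual discussion handles only $q=0$. You need to treat $a=0$, $T<0$ separately: either verify directly that the fourth-quadrant branch is the graph of the left half-map and make explicit in which sense $\varphi_{cT}^{-1}$ is to be read there (the $q<0$ convention of \eqref{eq:restrictedfunctions}, expressing $y_0$ as a function of $y_1$ along the branch, is introduced in Theorem \ref{th:curvasnivel} only for $a\neq0$), or record that with the item-1 notation the correct identification in this subcase is $P=\varphi_{cT}$. This is exactly the convention clash you flagged at the end of your proposal, and your case split as written falls into it.
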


\begin{remark}
For $T=0$, the functions $\varphi_{0}$ and $\varphi^{-1}_{0}$ coincide (see Theorem \ref{th:curvasnivel}). Therefore,
the case $T=0$ could have been also joined to the case $T<0$ in the statement of Corollary \ref{cor:final}.
\end{remark}

\begin{remark}
Let us briefly explain the role of the value $\varphi_{cT}(0)$ that acts as a limit
point of the range or domain of  the left Poincar\'e half-map $\varphi_{cT}$ or $\varphi^{-1}_{cT}$ respectively. For $cT=0$,
it is $\varphi_{0}(0)=0$ (see Theorem \ref{th:curvasnivel}) and it obviously
corresponds to the tangency point for scenarios ($S_0$), ($S_1$) and the center case of ($S_2$) (see Figures \ref{fig:casoS0}, \ref{fig:casoS1} and 
\ref{fig:casoS2}(a)).  When $cT\neq0$ and $T>0$, then $\varphi_{cT}(0)=\hat{y}_1<0$ (see Theorem \ref{th:curvasnivel} and Figure
\ref{fig:casoS2}(c) corresponding to scenario ($S_2$)), in other words, it is the image of $y_0=0$ by means of the left Poincar\'e half-map.
When $cT\neq0$ and $T<0$, then $\varphi_{cT}(0)=\hat{y}_0>0$ (see Theorem \ref{th:curvasnivel} and Figure
\ref{fig:casoS2}(b) corresponding to scenario ($S_2$)), in other words, it is the pre-image of $y_0=0$ by means of the left Poincar\'e half-map.
\end{remark}

Regarding the functions given in Corollary \ref{cor:final}, another important and direct consequence of Theorems \ref{th:implicaderecha}, \ref{th:curvasnivel}, and \ref{th:implicaizquierda},  characterizes the analyticity of the left Poincar\'e half-map. 

As it is well-known, for a point $y_0\geqslant0$ with $P(y_0)<0$, the analyticity of $P$ at $y_0$ is ensured by the transversality between the flow and the separation line (see,  for instance, \cite{Chi92}). The tangency of the flow of system \eqref{eq:lienard} at the origin complicates the study (see some partial results at \cite{ColGasPro01}). However, from Theorem \ref{th:curvasnivel}, the conclusion is immediate.

\begin{corollary}
\label{cor:analiticidad} Let be $P$ the left Poincar\'e half-map of system \eqref{eq:lienard}. The following statements are true:
\begin{enumerate}
\item If $P(0)=0$ then $P$ is an involution and, moreover, it is analytical in its domain of definition.
\item If $P(0)<0$ then $P$ is analytical in its domain of definition and $P^{-1}$ is analytical in the interior of its domain of definition.
\item If $P^{-1}(0)>0$ then $P$ is analytical in the interior of its domain of definition and $P^{-1}$ is analytical in its domain of definition.
\end{enumerate}
\end{corollary}

\begin{remark} 
Finally, Theorem \ref{th:implicaizquierda} and Corollaries \ref{cor:final} and \ref{cor:analiticidad}, can be easily extended to the right Poincar\'e half-map.
\end{remark}

\section{Conclusions}\label{sec:conclusions}
As was said in the introduction, in order to avoid the flaws due to the computation of the solutions of linear systems in the analysis of Poincar\'e half-maps
a novel theory has been developed in this manuscript. The key point was the introduction and study of the index-like function $\mathcal{F}$ given in \eqref{eq:F} that was obtained from the line integration, on a suitable curve, of an orthogonal vector field written in terms of a good choice of inverse integrating factor. In fact, this index-like function gives a common way to express the Poincar\'e half-maps.

This new approach could be extended to the study of Poincar\'e half-maps for non-linear planar systems as far as a nice inverse integrating factor may be found. Another interesting extension of the theory could be an analogous analysis for higher dimensions, where the important role of inverse integrating factor should be assumed by inverse Jacobi multipliers (see \cite{BeGia03}). We are convinced that these two ideas will open fruitful lines of study in the short or medium term.  

However, the true importance of the technique developed in this work is currently revealed in its application to the analysis of the dynamical behavior of planar piecewise linear systems, in particular, the obtention of optimal upper bounds on the number of limit cycles. On the one hand, for continuous planar piecewise linear systems with two zones of linearity, it is known that this upper bound is one. This result was originally proved in \cite{FrPoRoTo98} with exhaustive and long case-by-case analysis. By using the index-like function, we have got a direct and short proof (without cases) of this same result (see \cite{CaFeNoPR1}). On the other hand, with the same technique, it is possible to obtain the same bound for sewing discontinuous planar piecewise linear systems with two zones of linearity (see \cite{CaFeNoPR2}). Previous works (see \cite{FrPoTo13,MeTo15,LiLiuLli21}) give partial results for such systems by using the case-by-case analysis. 

It is also interesting to study the optimal upper bound for generic discontinuous planar piecewise linear systems with two zones of linearity. The first basic open problem is the existence of such a uniform bound for all these systems, that is, independent of the value of parameters. Since the use of our new approach allows us to write this problem in terms of the existence of a uniform bound for the number of solutions of a common system of polynomial equations of fixed degrees, the conclusion is obvious. 
Moreover, the optimal bound, which is known to be greater or equal than three \cite{HuYa12,LlPo12,BuPeTo13}, could be directly established (without a case-by-case study) from this system of polynomial equations. Nowadays, this study is one of our priority lines of research.

Regarding other interesting achieved results in this manuscript, we would like to mention the analyticity of the Poincar\'e half-map or its inverse function at the tangency points.

\section*{Acknowledgments}
The authors would like to express their gratitude to professors Douglas D. Novaes and Jos\'e A. Rodr\'{\i}guez for valuable and constructive discussions and priceless encouragement during part of the development and writing of this work.

This work has been partially supported by the \emph{Ministerio de Econom\'ia y Competitividad} co-financed with FEDER funds, in the frame of the projects MTM2014-56272-C2-1-P, MTM2015-65608-P, MTM2017-87915-C2-1-P and PGC2018-096265-B-I00 and by the \emph{Consejer\'{\i}a de Educaci\'on y Ciencia de la Junta de Andaluc\'{\i}a} (TIC-0130, P12-FQM-1658).


\begin{thebibliography}{}

\bibitem{AnViKh66}	A. Andronov, A. Vitt, S. Khaikin. Theory of Oscillations (Pergamon Press, Oxford), Chapter 8 (1966), 443--582. 

\bibitem{BeGia00} L.R. Berrone, H. Giacomini. {\it On the vanishing set of inverse integrating factors}, Qualitative Theory of Dynamical Systems, 1 (2000), 211--230.

\bibitem{BeGia03} L.R. Berrone, H. Giacomini. {\it Inverse Jacobi multipliers}, Rendiconti del Circolo Matematico di Palermo Series 2, 52 (2003), 77--130. 

\bibitem{BuPeTo13} C. A. Buzzi, C. Pessoa, J. Torregrosa. {\it Piecewise linear perturbations of a linear center}, Discrete and Continuous Dynamical Systems, 33 (2013), 3915--3936.


\bibitem{CaFeGaNoPR} V. Carmona, F. Fern\'andez-S\'anchez, E. Garc\'{\i}a-Medina, Douglas D. Novaes. {\it Properties of Poincar\'{e} half-maps for planar linear systems via an integral characterization}, preprint.

\bibitem{CaFeNoPR1} V. Carmona, F. Fern\'andez-S\'anchez, Douglas D. Novaes. {\it A new simple proof for Lum-Chua's conjecture}, Nonlinear Analysis: Hybrid Systems, 40 (2021), 100992.


\bibitem{CaFeNoPR2} V. Carmona, F. Fern\'andez-S\'anchez, Douglas D. Novaes. {\it Existence, uniqueness, and stability of limit cycles in planar piecewise linear differential systems without sliding region}, preprint.


\bibitem{ieee}  V. Carmona, E. Freire, E. Ponce, F.Torres.  {\it On simplifying and classifying piecewise-linear systems}, IEEE Transactions on Circuits and Systems I Fundamental Theory and Applications, 49 (2002),  609--620.


\bibitem{ChaGiaGiLli99} J. Chavarriga, H. Giacomini, J. Gin\'e, J. Llibre. {\it On the integrability of two-dimensional flows}, Journal of Differential Equations, 157 (1999), 163--182.


\bibitem{Chi92} C. Chicone. {\it Bifurcation of Nonlinear Oscillations and Frequency Entrainment near Resonance}, SIAM Journal on Mathematical Analysis, 23 (1992), 1577--1608. 

\bibitem{ColGasPro01} B. Coll, A. Gasull, R. Prohens. {\it Degenerate Hopf Bifurcations in Discontinuous Planar Systems}, Journal of Mathematical Analysis and Applications, 253 (2001), 671--690.

\bibitem{FrPoRoTo98} E. Freire, E. Ponce, F. Rodrigo, F. Torres. {\it Bifurcation Sets of Continuos Piecewise Linear Systems with Two Zones}, International Journal of Bifurcation and Chaos, 8,11 (1998), 2073--2097.

\bibitem{FrPoTo13} E. Freire, E. Ponce, and F. Torres. {\it  Planar filippov systems with maximal crossing set and piecewise linear focus dynamics}. In Progress and Challenges in Dynamical Systems, Springer Berlin Heidelberg (2013), 221--232.

\bibitem{GGG10} I.A. Garc\'ia, H. Giacomini, M. Grau. {\it The inverse integrating factor and the {P}oincar\'e map}, Transactions of the American Mathematical Society, 362 (2010), 3591--3612.
		
\bibitem{GarGra10} I.A. Garc\'{\i}a, M. Grau. {\it A survey on the inverse integrating factor}, Qualitative Theory of Dynamical Systems, 9 (2010), 115--166.


\bibitem{GaLliMaMa00} A. Gasull, J. Llibre, V. Ma\~nosa, F. Ma\~nosas. {\it The focus-centre problem for a type of degenerate system}, Nonlinearity, 13  (2000), 699--729.

\bibitem{GaMaMa02} A. Gasull, V. Ma\~nosa, F. Ma\~nosas. {\it Monodromy and stability of a class of degenerate planar critical points}, Journal of Differential Equations, 182 (2002), 169--190.

\bibitem{GiaLliVia96} H. Giacomini, J. Llibre, M. Viano. {\it On the nonexistence, existence, and uniqueness of limit cycles}, Nonlinearity, 9 (1996), 501--516.

\bibitem{GoSc85} M. Golubitsky, D.G. Schaeffer. Singularities and Groups in Bifurcation Theory. Vol. 1. Berlin-Heidelberg-New York-Tokyo, Springer-Verlag (1985). 

\bibitem{HuYa12} S. M. Huan, X. S. Yang. {\it The number of limit cycles in general planar piecewise linear systems}, Discrete and Continuous Dynamical Systems-A, 32 (2012), 2147--2164.

\bibitem{KuRiGr03} Yu. A. Kuznetsov, S. Rinaldi, A. Gragnani. {\it One-parameter bifurcations in planar Filippov systems}, International Journal of Bifurcation and Chaos, 13, 8 (2003), 2157--2188. 


\bibitem{LiLiuLli21} S. Li, C. Liu, J. Llibre. {\it The planar discontinuous piecewise linear refracting systems have at most one limit cycle}, Nonlinear Analysis: Hybrid Systems, 41 (2021), 101045.


\bibitem{LlPo12} J. Llibre, E. Ponce. {\it Three nested limit cycles in discontinuous piecewise linear differential systems with two zones}, Dynamics of Continuous, Discrete and Impulsive Systems B, 19 (2012), 325--335.

\bibitem{LlPoVa19} J. Llibre, E. Ponce, C. Valls.
{\it Two limit cycles in Li\'enard piecewise linear differential systems},
Journal of Nonlinear Science, 29 (2019), 1499--1522. 

\bibitem{LlTe14} J. Llibre, A. E. Teruel.
Introduction to the qualitative theory of differential systems: planar, symmetric and continuous piecewise linear systems. 
Springer-Verlag (2014). 

\bibitem{MeTo15} J. C. Medrado, J. Torregrosa.
{\it Uniqueness of limit cycles for sewing piecewise linear systems},
Journal of Mathematical Analysis and Applications, 431 (2015), 529--544.


\end{thebibliography}
\end{document}